 \newcommand{\A}{\ensuremath{\mathbb{A}}}
 \newcommand{\D}[2]{\ensuremath{ \frac{\partial{#1}}{\partial{#2}}}}
 \newcommand{\Z}{\ensuremath{\mathbb{Z}}}
 \newcommand{\CP}{\ensuremath{\mathbb{CP}}}
 \newcommand{\st}{\ensuremath{\sqrt{-1}}}
 \newcommand{\ddb}{\ensuremath{\partial \bar{\partial}}}
 \newcommand{\KRf}{K\"ahler Ricci flow\;}
 \newcommand{\KRfc}{K\"ahler Ricci flow,\;}
 \newcommand{\KRfd}{K\"ahler Ricci flow.\;}
 \newcommand{\KRF}{K\"ahler Ricci Flow\;}
 \DeclareMathOperator{\Vol}{Vol}
 \DeclareMathOperator{\diam}{diam}
 \newcommand{\Blow}[1]{\ensuremath{\mathbb{CP}^2 \# {#1}\overline{\mathbb{CP}}^2}}
 \newcommand{\norm}[2]{{ \ensuremath{\|} #1 \ensuremath{\|}}_{#2}}
 \newcommand{\snorm}[2]{{ \ensuremath{\left |} #1 \ensuremath{\right |}}_{#2}}
 \newcommand{\sconv}{\ensuremath{ \stackrel{C^\infty}{\longrightarrow}}}
 \def\ExtendSymbol#1#2#3#4#5{\ext@arrow 0099{\arrowfill@#1#2#3}{#4}{#5}}
 \definecolor{hao}{rgb}{1,0.5,0}
 \definecolor{miao}{cmyk}{0.5,0,0.2,0.2}
 \definecolor{qiao}{gray}{0.96}
 \newcommand{\Holder}{H\"{o}lder\;}
 \newcommand{\Poincare}{Poincar\`{e}\;}
 \newtheorem{claim}{Claim}
 \newtheorem*{clm}{Claim}
 \newtheorem{corollary}{Corollary}[section]
 \newtheorem{proposition}{Proposition}[section]
 \newtheorem{lemma}{Lemma}[section]
 \newtheorem{theorem}{Theorem}[section]
 \newtheorem*{thm}{Theorem}
 \newtheorem{definition}{Definition}[section]
 \newtheorem{remark}{Remark}[section]
 \newtheorem*{rmk}{Remark}
 \newtheorem{theoremin}{Theorem}
 \newtheorem{lemmain}{Lemma}
 \newtheorem{definitionin}{Definitionin}
\title{\KRf on Fano manfiolds(I)}
\author{Xiuxiong Chen\footnote{Partially supported by a NSF grant.}\;,  Bing Wang}
\date{}
\begin{document}

 \maketitle

\begin{abstract}
   We study the evolution of anticanonical line bundles along the
   \KRfd   We show that under some conditions, the convergence of
   \KRf is determined by the properties of the anticanonical divisors
   of $M$.     As examples, the \KRf on $M$ converges when $M$ is
   a Fano surface and $c_1^2(M)=1$ or $c_1^2(M)=3$.
   Combined with the work in~\cite{CW1}  and~\cite{CW2},
   this gives a Ricci flow proof of the Calabi conjecture on
   Fano surfaces with reductive automorphism groups.  The original proof
   of this conjecture is due to Gang Tian in \cite{Tian90}.
\end{abstract}

 \tableofcontents

\section{Introduction}

 In this paper, we introduce a new criteria for the convergence of the K\"ahler Ricci flow
 in   general Fano manifolds. This might be useful in attracting renewed attentions
 to the renown Calabi conjecture in higher dimensional Fano manifolds.  Moreover,  we  verify  these criteria for
  the K\"ahler Ricci flow in  Fano surfaces  $ \Blow{8}$ and $\Blow{6}$.
 Consequently, we  give a proof of the
 convergence of the \KRf  on such Fano surfaces.
 The existence of KE (K\"ahler Einstein) metrics on these Fano surfaces follows as a corollary. \\

 In K\"ahler geometry, a dominating problem is to prove the celebrated
 Calabi conjecture (\cite{Ca}).  It states that if the first Chern class of a K\"ahler manifold $M$
  is positive, null or negative, then the canonical K\"ahler class of $M$  admits
 a KE metric.   In 1976,  the null case Calabi conjecture was proved by S. T. Yau.
 Around the same time, the negative
 case was proved independently by T. Aubin and S. T. Yau.  However,
 the positive first Chern class  case is much more complicated.
 In~\cite{Ma}, Matsushima showed that the reductivity of $Aut(M)$
 is a necessary condition for the existence of KE metric.
 In \cite{Fu}, A. Futaki introduced an algebraic invariant which
 vanishes if the canonical K\"ahler class admits a KE  metric.
 Around 1988,  G. Tian~\cite{Tian90} proved the Calabi conjecture for Fano surfaces
  with reductive automorphism groups.    By the classification theory of complex surfaces,
  Tian actually proved the existence of KE metric on Fano surface $M$
   whenever $M$ is diffeomorphic to  $\Blow{k} \; (3 \leq k \leq 8)$.
 Prior to Tian's work, there is a series of
 important works in \cite{Tian87}, \cite{TY}, \cite{Siu} where
 existence results of KE metrics  on  some special complex surfaces
 were derived.\\

 Let $(M, [\omega])$ be a Fano manifold where $[\omega]$ is the
 canonical K\"ahler class. Suppose that $\{\omega_{t}\} (t \in [0,\infty))$ is the one parameter family
 of K\"ahler metrics in
 $[\omega]$ evolves under the K\"ahler Ricci flow.  Let $K_{M}^{-1}$
 be the anticanonical line bundle equipped with a natural, evolving metric $h_{t} =\omega_{t}^{n}=\det g_{\omega_t}.\;$
  In this paper, we adopt the view that one must study the the associated evolution of line bundles
 when study \KRf:
   \[
   \left(\begin{array} {c} (K_{M}^{-\nu}, h_{t}^{\nu}) \\
   \downarrow \\
   (M, \omega_{t}) \end{array}\right).
   \]
  It is well known that  $K_M^{-\nu}$ is very ample when $\nu$ is large.
  Let $N_{\nu}= \dim H^0(K_M^{-\nu})-1, \;$
  $\{S_{\nu, \beta}^t\}_{\beta=0}^{N_{\nu}}$ be orthonormal holomorphic sections in $H^0(K_M^{-\nu})$
  with respect to metric $g_t$ and $h_t^{\nu}$, i.e.,
  \begin{align*}
      \int_M \left< S_{\nu, \alpha}^{t}, S_{\nu, \beta}^t \right>_{h_t^{\nu}} \omega_{t}^{n} =
      \delta_{\alpha \beta}.
  \end{align*}
    An easy observation shows that
  \begin{align*}
   F_{\nu}(x, t) =   \frac{1}{\nu} \log \sum_{\beta=0}^{N_{\nu}} \snorm{S_{\nu, \beta}^t}{h_t^{\nu}}^2(x)
  \end{align*}
  is a well defined function on $M \times [0, \infty)$ (independent of the choice of orthonormal basis).
% Note that if we replace $K_{M}^{-1}$ by the polarization line bundle ( Ricci curvature of
%the line bundle is the K\"ahler form in the based manifold), then this function is nothing
 % but the Bergman kernel function.   %There are intensive study in
 % this direction,  readers are referred
  %to~\cite{Tian90t},~\cite{Zel},~\cite{Lu},~\cite{Don}
  %and references therein for its applications in geometry.
 % In this paper, we choose $h_{t} = d\, \mu_{t}.\;$
  %One of the motivations is that the convergence of plurianticanonical line
  %bundles is easier to study under this metric, in particular when the underlying manifolds
  %converge  to a singular space.
  The  \KRf $\{(M, g(t)), 0 \leq t < \infty\}$ is called a {\bf flow tamed by $\nu$} if
  $K_M^{-\nu}$ is very ample and $F_{\nu}(x, t)$ is a uniformly bounded function on $M \times [0,
  \infty)$.  The flow is called a {\bf tamed \KRf} if it is tamed by a some integer $\nu$.

   For a tamed flow, we can reduce the convergence of the flow to the values of local
   $\alpha$-invariants of plurianticanonical divisors.

 \begin{definitionin}
   Suppose $L$ is a line bundle over $M$ with Hermitian metric $h$,  $S$ is a holomorphic
   section of $L$, $x \in M$.  Define
 \begin{align*}
        \alpha_x(S)= \sup\{\alpha|   \norm{S}{h}^{-2\alpha} \; \textrm{is locally integrable around} \;
        x\}.
 \end{align*}
 \label{definitionin: lalpha}
 \end{definitionin}
 See~\cite{Tian90}  and~\cite{Tian91} for more details about this
 definition.
 Note that $\alpha_x(S)$ is also called singularity exponent (\cite{DK}),
 logarithm canonical threshold (\cite{ChS}), etc.
 It is determined only by the singularity type of $Z(S)$.
 Therefore, if $S \in H^0(K_M^{-\nu})$, $\alpha_x(S)$ can only
 achieve finite possible values.

 \begin{definitionin}
  Let $\mathscr{P}_{G, \nu, k}(M, \omega)$ be the collection of all
  $G$-invariant functions of form
  $\displaystyle  \frac{1}{\nu} \log (\sum_{\beta=0}^{k-1} \norm{\tilde{S}_{\nu, \beta}}{h^{\nu}}^2)$,
  where $\{\tilde{S}_{\nu, \beta}\}_{\beta=0}^{k-1} \; ( 1 \leq  k \leq \dim H^0(K_M^{-\nu}))$
  satisfies
  \begin{align*}
     \int_M \langle \tilde{S}_{\nu, \alpha}, \tilde{S}_{\nu, \beta}
     \rangle_{h^{\nu}} \omega^n = \delta_{\alpha \beta},
     \quad
     0 \leq \alpha, \beta \leq k-1 \leq \dim (K_M^{-\nu}) -1;
     \quad
     h= \det g_{\omega}.
  \end{align*}

    Define
  \begin{align*}
      \alpha_{G, \nu, k} \triangleq
      \sup\{ \alpha |  \sup_{\varphi \in \mathscr{P}_{G,\mu, k}} \int_M e^{-\alpha \varphi} \omega^n <
      \infty\}.
  \end{align*}
  If $G$ is trivial, we denote $\alpha_{\nu, k}$ as $\alpha_{G, \nu, k}$.
  \label{definitionin: nualpha}
 \end{definitionin}

   It turns out that  the value
  of local $\alpha$-invariants,
  $\alpha_{\nu, 1}$ and $\alpha_{\nu, 2}$  play important roles in the convergence of K\"ahler Ricci flow.

  \begin{theoremin}
  Suppose $\{(M^n, g(t)), 0 \leq t < \infty \}$ is a \KRf tamed by $\nu$.
  If $\alpha_{\nu, 1}> \frac{n}{(n+1)}$,
  then $\varphi$ is uniformly bounded along this flow. In
  particular, this flow converges to a KE metric exponentially fast.
 \label{theoremin: nuconv}
 \end{theoremin}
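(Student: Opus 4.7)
The plan is to adapt Tian's classical $\alpha$-invariant continuity-method argument to the parabolic setting of the \KRfc substituting his pluricanonical peak sections by members of the time-dependent orthonormal basis $\{S_{\nu,\beta}^t\}$ that the tamed hypothesis makes available. I would start by assembling Perelman's now-standard estimates for the normalized \KRf on a Fano manifold: uniform bounds on the scalar curvature, the diameter, and the normalized Ricci potential $u_t$ together with $\|\nabla u_t\|_{C^0}$. After the customary normalization of $\varphi$, these translate into $|\dot\varphi_t|_{C^0}\le C$, and, via the potential equation $\dot\varphi_t=\log(\omega_{\varphi_t}^n/\omega_0^n)+\varphi_t-h_{\omega_0}$, they give the volume identity $\int_M e^{\dot\varphi_t-\varphi_t+h_{\omega_0}}\omega_0^n=V$. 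Combining these shows that $\int_M e^{-\varphi_t}\omega_0^n$ is pinched between two positive constants uniformly in $t$, and a standard Green's-function argument on $(M,g_t)$ (exploiting the bounded diameter and the Perelman Sobolev inequality) reduces the full $C^0$-bound for $\varphi_t$ to the one-sided bound $\sup_M\varphi_t\le C$.

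To prevent $\sup_M\varphi_t$ from drifting to $+\infty$, I will use the tamed hypothesis directly. Since $\rho_t(x):=\sum_\beta|S_{\nu,\beta}^t|^2_{h_t^\nu}(x)$ is uniformly bounded below by some $c_0>0$, at a maximum point $x_t$ of $\varphi_t$ one can pick a section $S^t:=S_{\nu,\beta(t)}^t$ from the orthonormal basis with $|S^t|^2_{h_t^\nu}(x_t)\ge c_0/(N_\nu+1)$. Using the flow-quotient identity
\begin{align*}
\log\frac{h_t}{h_0}=\log\frac{\omega_t^n}{\omega_0^n}=\dot\varphi_t-\varphi_t+h_{\omega_0},
\end{align*}
together with the $L^\infty$ bounds on $\dot\varphi_t$ and $h_{\omega_0}$, the pointwise lower bound converts into $\frac{1}{\nu}\log|S^t|^2_{h_0^\nu}(x_t)\ge\sup_M\varphi_t-C'$. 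An analogous computation bounds $\|S^t\|^2_{L^2(\omega_0,h_0)}$ from above by an explicit multiple of $e^{(\nu+1)\sup_M\varphi_t}$, so after renormalizing $S^t$ to unit $L^2(\omega_0,h_0)$-norm the resulting function $\psi^t:=\frac{1}{\nu}\log|S^t/\|S^t\|_{L^2,0}|^2_{h_0^\nu}$ lies in $\mathscr{P}_{\nu,1}(M,\omega_0)$ with $\sup_M\psi^t$ comparable to $\sup_M\varphi_t$.

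Now I would fix $\alpha\in(n/(n+1),\alpha_{\nu,1})$. By Definition \ref{definitionin: nualpha}, $\int_M e^{-\alpha\psi^t}\omega_0^n\le C_\alpha$ uniformly in $t$. Unwinding the $L^2$-renormalization of $S^t$ and comparing with the volume identity $\int_M e^{-\varphi_t}\omega_0^n\asymp 1$, the exponent inequality $\alpha>n/(n+1)$ is precisely what is needed for H\"older's inequality to absorb the renormalization factor, forcing $\sup_M\varphi_t\le C$. Once $\|\varphi_t\|_{C^0}$ is controlled, Yau's parabolic $C^2$-estimate followed by Calabi's $C^3$-estimate and standard bootstrap give uniform $C^\infty$-bounds; the exponential convergence to a KE metric then follows from the monotone decay of the Mabuchi K-energy combined with a \Poincare/Futaki-vanishing argument near the limit.

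The hardest step is the last accounting: one must keep track of the exact power of $\sup_M\varphi_t$ entering $\|S^t\|^2_{L^2(\omega_0,h_0)}$ (one factor from $\omega_t^n/\omega_0^n$ and $\nu$ factors from $(h_t/h_0)^\nu$, giving the combined exponent $\nu+1$). Only with this sharp exponent does the $\alpha_{\nu,1}$-bound couple with the volume identity at the threshold $n/(n+1)$, in exact parallel with Tian's original continuity-method $C^0$-estimate; all other pieces (Perelman estimates, Green's-function reduction, higher-order bootstrap, and the convergence argument) follow well-established playbooks.
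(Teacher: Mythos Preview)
Your proposal has a genuine gap at the step where you try to close the $\sup_M\varphi_t$ bound. Picking a single section $S^t$ large at the maximum point $x_t$ only yields pointwise information at $x_t$; it does not give you a global comparison between $\varphi_t-\sup_M\varphi_t$ and $\frac{1}{\nu}\log|S^t|^2_{h_0^\nu}$ on all of $M$. In particular, your claim that $\sup_M\psi^t$ is ``comparable to $\sup_M\varphi_t$'' fails: from $|S^t|^2_{h_0^\nu}(x_t)\gtrsim e^{\nu\sup_M\varphi_t}$ and $\|S^t\|^2_{L^2,0}\lesssim e^{(\nu+1)\sup_M\varphi_t}$ you only get $\psi^t(x_t)\geq -\frac{1}{\nu}\sup_M\varphi_t-C$, while $\sup_M\psi^t\leq C$ always (Moser iteration on the fixed background). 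More seriously, the threshold $n/(n+1)$ has no source in your accounting: the exponent you track is $\nu+1$ (the pluricanonical power plus one volume factor), which is unrelated to the complex dimension $n$. No H\"older inequality between $\int_M e^{-\alpha\psi^t}\omega_0^n$ and the volume identity will manufacture the dimension-dependent threshold.

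The paper's argument closes this gap differently. It first proves a \emph{global} strong partial $C^0$-estimate: writing the orthonormal basis at time $t$ as $\sigma(t)$ applied to a basis orthonormal for $g_0$, with $\sigma(t)=a(t)\,\mathrm{diag}(\lambda_0(t),\dots,\lambda_N(t))$ and $\lambda_N(t)=1$, one obtains
\[
\Bigl|\varphi_t-\sup_M\varphi_t-\tfrac{1}{\nu}\log\textstyle\sum_\beta|\lambda_\beta(t)\tilde S^t_{\nu,\beta}|^2_{h_0^\nu}\Bigr|<C
\]
everywhere on $M$. Since $\lambda_N=1$, the sum dominates $|\tilde S^t_{\nu,N}|^2$, so $\int_M e^{-\alpha(\varphi_t-\sup_M\varphi_t)}\omega^n\leq C\int_M|\tilde S^t_{\nu,N}|^{-2\alpha/\nu}_{h_0^\nu}\omega^n$, and this is uniformly bounded for $\alpha<\alpha_{\nu,1}$ (with semi-continuity of complex singularity exponents for the limit along a subsequence). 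Rewriting $\omega^n=e^{-\varphi-u_\omega+\dot\varphi}\omega_\varphi^n$ and applying Jensen gives $\sup_M\varphi_t\leq\frac{1-\alpha}{\alpha}\frac{1}{V}\int_M(-\varphi_t)\omega_{\varphi_t}^n+C$. The dimension finally enters through the standard energy inequality $\frac{1}{V}\int_M(-\varphi_t)\omega_{\varphi_t}^n\leq n\sup_M\varphi_t+C$ (Proposition~\ref{proposition: supphi}); combining the two yields $\sup_M\varphi_t<\frac{n(1-\alpha)}{\alpha}\sup_M\varphi_t+C$, which is a contradiction precisely when $\alpha>n/(n+1)$. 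You are missing both the global Bergman-kernel comparison and this energy inequality; without them the argument cannot produce the correct threshold.
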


 \begin{theoremin}
 Suppose $\{(M^n, g(t)), 0 \leq t < \infty \}$ is a \KRf tamed by $\nu$.
 If $\alpha_{\nu, 2}>\frac{n}{n+1}$ and
 $\alpha_{\nu, 1} > \frac{1}{2- \frac{n-1}{(n+1) \alpha_{\nu,2}}}$, then $\varphi$ is
 uniformly bounded along this flow.  In particular, this flow converges to a KE
 metric exponentially fast.
 \label{theoremin: nuconvr}
 \end{theoremin}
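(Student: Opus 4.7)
The plan is to follow exactly the reduction used in the proof of Theorem \ref{theoremin: nuconv} and to replace its single-invariant step by a H\"older interpolation that combines $\alpha_{\nu,1}$ and $\alpha_{\nu,2}$. As in the previous theorem, once one establishes a uniform $C^0$ bound $\|\varphi(\cdot,t)\|_\infty \leq C$ along the flow, Perelman's estimates together with the tame hypothesis propagate the bound to higher orders and yield exponential convergence to a KE metric. Thus the only new content lies in the $C^0$ estimate, and that is where the weakened hypothesis on $\alpha_{\nu,1}$ must be used.

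Normalize $\sup_M \varphi(\cdot,t)=0$. Writing the evolving metric on $K_M^{-\nu}$ as $h_t^\nu = e^{\nu(\dot\varphi-\varphi+h_\omega)}h_0^\nu$ and combining the tame bound $|F_\nu|\leq C$ with Perelman's uniform bounds on $\dot\varphi$ and $h_\omega$, one obtains
\[
   \varphi(x,t)\;=\;\frac{1}{\nu}\log\sum_{\beta=0}^{N_\nu}\|S^t_{\nu,\beta}\|^2_{h_0^\nu}(x) + O(1),
\]
uniformly in $t$. By a standard Tian-type Moser iteration on the associated Monge-Amp\`ere equation, uniform boundedness of $-\inf\varphi$ will follow once one shows that $\int_M e^{-\alpha\varphi(x,t)}\omega^n\leq C$ uniformly in $t$ for some $\alpha>n/(n+1)$.

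To produce such an $\alpha$, I decompose the right-hand side into a two-section piece adapted to the worst-case point of $\varphi$ at time $t$ and a residual single-section piece. Concretely, build $\psi^{(2)}_t\in\mathscr{P}_{\nu,2}(M,\omega)$ from two orthonormal sections (peak sections centered near a point where $\varphi(\cdot,t)$ is minimal, so that $\psi^{(2)}_t$ captures the ``dangerous'' contribution to $\varphi$) and a residual $\chi_t$ pointwise controlled by a single section (so $\chi_t$ is essentially in $\mathscr{P}_{\nu,1}(M,\omega)$). H\"older's inequality with conjugate exponents $p,q$ then gives
\[
   \int_M e^{-\alpha\varphi}\omega^n \;\leq\; C\Bigl(\int_M e^{-p\alpha\psi^{(2)}_t}\omega^n\Bigr)^{1/p} \Bigl(\int_M e^{-q\alpha\chi_t}\omega^n\Bigr)^{1/q},
\]
with the first factor bounded provided $p\alpha<\alpha_{\nu,2}$ and the second provided $q\alpha<\alpha_{\nu,1}$. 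Optimizing $p,q$ subject to these two constraints and requiring the admissible $\alpha$ to exceed $n/(n+1)$ forces exactly the algebraic relation $\alpha_{\nu,1}>1/\bigl(2-(n-1)/((n+1)\alpha_{\nu,2})\bigr)$; as a consistency check, the boundary $\alpha_{\nu,2}=n/(n+1)$ forces $p\to 1$ and the condition reduces to $\alpha_{\nu,1}>n/(n+1)$, recovering Theorem \ref{theoremin: nuconv}.

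The main obstacle I anticipate is the construction of the splitting $\varphi\leadsto \psi^{(2)}_t + \chi_t$ with uniform control in $t$: the ``best'' pair of sections genuinely depends on both the time and on the location of the infimum of $\varphi$, whereas the definition of $\mathscr{P}_{\nu,2}$ needs a fixed orthonormal pair at each time. Producing the pair via a peak-section construction near a minimizing point, and verifying that the change-of-basis between $(h_t^\nu,\omega_t^n)$- and $(h_0^\nu,\omega^n)$-orthonormal frames is bounded in $t$ (which is where the tame hypothesis is essential), are the delicate technical steps. Once the splitting is in place, the H\"older optimization producing the algebraic condition in the statement is routine.
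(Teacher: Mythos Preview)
Your H\"older interpolation does not produce the condition in the statement; the claim that optimizing $p,q$ ``forces exactly the algebraic relation'' is wrong. With $1/p+1/q=1$, the constraints $p\alpha<\alpha_{\nu,2}$ and $q\alpha<\alpha_{\nu,1}$ allow at best $\alpha$ up to the harmonic mean $\alpha_{\nu,1}\alpha_{\nu,2}/(\alpha_{\nu,1}+\alpha_{\nu,2})$, so the feasibility of $\alpha>n/(n+1)$ becomes $\alpha_{\nu,1}^{-1}+\alpha_{\nu,2}^{-1}<(n+1)/n$. This is strictly stronger than the paper's hypothesis $\alpha_{\nu,1}^{-1}<2-(n-1)/\bigl((n+1)\alpha_{\nu,2}\bigr)$; for instance at $\alpha_{\nu,2}=1$ your condition demands $\alpha_{\nu,1}>n$, while the theorem only needs $\alpha_{\nu,1}>(n+1)/(n+3)$. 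Your own consistency check fails too: at $\alpha_{\nu,2}=n/(n+1)$ the harmonic-mean condition forces $\alpha_{\nu,1}^{-1}<0$, not $\alpha_{\nu,1}>n/(n+1)$. (There is also no natural additive splitting $\varphi\leadsto\psi^{(2)}_t+\chi_t$: the strong partial $C^0$-estimate expresses $\varphi$ as the log of a \emph{sum} of section norms, so what one actually gets is a \emph{multiplicative} lower bound $\sum_\beta|\lambda_\beta\tilde S_\beta|^2\geq \lambda_{N-1}^2(|\tilde S_{N-1}|^2+|\tilde S_N|^2)$.)

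The missing mechanism is the refined $I$--$J$ type inequality (\ref{eqn: dsupphi}),
\[
 \frac{1}{V}\int_M(-\varphi)\,\omega_\varphi^n \;\leq\; n\sup_M\varphi \;-\;\frac{n-1}{V}\int_M \sqrt{-1}\,\partial X\wedge\bar\partial X\wedge\omega^{n-1}\;+\;C,
\]
with $X=\nu^{-1}\log\sum_\beta|\lambda_\beta\tilde S_\beta|^2$. The paper argues by contradiction: if $\varphi_{t_i}$ blows up, then necessarily $\lambda_{N-1}(t_i)\to 0$ (else the $\alpha_{\nu,2}$ bound alone gives $\int e^{-\alpha\varphi}<C$ for some $\alpha>n/(n+1)$ and one is done as in Theorem~\ref{theoremin: nuconv}). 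One then proves two inequalities in the auxiliary quantity $-\log\lambda_{N-1}(t_i)$: the $\alpha_{\nu,2}$ integrability of the two-section sum yields
\[
 (1-\epsilon)\alpha_{\nu,2}\sup_M\varphi_{t_i}-(1-(1-\epsilon)\alpha_{\nu,2})\tfrac{1}{V}\!\int_M(-\varphi_{t_i})\omega_{\varphi_{t_i}}^n \;\leq\; -\tfrac{2(1-\epsilon)\alpha_{\nu,2}}{\nu}\log\lambda_{N-1}(t_i)+C,
\]
while a direct estimate (Claim~3 in the paper, following \cite{Tian91}) gives $\tfrac{1}{V}\int\sqrt{-1}\,\partial X\wedge\bar\partial X\wedge\omega^{n-1}\geq -\tfrac{1-\epsilon}{\nu}\log\lambda_{N-1}(t_i)-C$. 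Plugging the latter into (\ref{eqn: dsupphi}) and eliminating $\log\lambda_{N-1}$ between the two inequalities gives $\tfrac{1}{V}\int(-\varphi)\omega_\varphi^n\leq A\sup_M\varphi+C$ with $A\to \tfrac{(n+1)\alpha_{\nu,2}}{(n+1)\alpha_{\nu,2}-(n-1)}$ as $\epsilon\to 0$; combining this with the $\alpha_{\nu,1}$ inequality $\sup_M\varphi\leq\tfrac{1-\alpha_{\nu,1}}{\alpha_{\nu,1}}\tfrac{1}{V}\int(-\varphi)\omega_\varphi^n+C$ closes precisely when $\alpha_{\nu,1}>A/(A+1)$, which is the stated condition. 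The gradient term carrying the factor $(n-1)$ is what your H\"older scheme cannot see.
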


  In fact, if a \KRf is tamed by some large $\nu$, an easy argument (c.f. Section 2.3) shows that
  the following strong partial $C^0$-estimate hold.
   \begin{align}
     \left|\varphi(t) - \sup_M \varphi(t)
    -\frac{1}{\nu}\log \sum_{\beta=0}^{N_{\nu}} \snorm{\lambda_{\beta}(t) \tilde{S}_{\nu, \beta}^t}{h_0^{\nu}}^2 \right| < C.
   \label{eqn: spe}
   \end{align}
  Here $\varphi(t)$ is the evolving K\"ahler potential.
  $0 < \lambda_0(t) \leq \lambda_1(t) \leq \cdots \leq \lambda_{N_{\nu}}(t)=1 $
  are $N_{\nu}+1$ positive functions of time $t$.
  $\{\tilde{S}_{\nu, \beta}^t\}_{\beta=0}^{N_{\nu}}$ is an orthonormal
  basis of $H^0(K_M^{-\nu})$ under the fixed metric $g_0$.
  Intuitively, inequality (\ref{eqn: spe}) means that we can control $Osc_{M} \varphi(t)$ by
  $\displaystyle \frac{1}{\nu}\log \sum_{\beta=0}^{N_{\nu}} \snorm{\lambda_{\beta}(t) \tilde{S}_{\nu, \beta}^t}{h_0^{\nu}}^2$
  which only blows up along intersections of pluri-anticanonical divisors.
   Therefore, the estimate of $\varphi(t)$ is more or less translated to the study of the property of
  pluri-anticanonical holomorphic sections.\\

 In view of these theorems, we need to check the following two conditions:
 \begin{itemize}
 \item Whether the K\"ahler Ricci flow is a tamed flow;
 \item Whether the $\alpha_{\nu, k} \; (k=1,2)$ are big enough.
 \end{itemize}
 The second condition can be checked by purely algebraic geometry
 method.   The first condition is much weaker.
 We believe that it holds for every \KRf on
 Fano manifold although we cannot prove this right now.
 However, under some extra conditions, we can check the
 first condition by the following theorem.

  \begin{theoremin}
   Suppose $\{(M^n, g(t)), 0 \leq t < \infty\}$ is a \KRf satisfying
   the following conditions.
   \begin{itemize}
   \item volume ratio bounded from above, i.e., there exists a
   constant $K$ such that
   \begin{align*}
   \Vol_{g(t)}(B_{g(t)}(x, r)) \leq Kr^{2n}
   \end{align*}
   for every geodesic ball $B_{g(t)}(x, r)$ satisfying $r \leq 1$.
   \item weak compactness, i.e., for every sequence $t_i \to \infty$, by
   passing to subsequence, we have
   \begin{align*}
      (M, g(t_i)) \sconv (\hat{M}, \hat{g}),
   \end{align*}
   where $(\hat{M}, \hat{g})$ is a Q-Fano normal variety,
   $\sconv$ means Cheeger-Gromov convergence, i.e., $(M, g(t_i))$ converges to $(\hat{M}, \hat{g})$
   in Gromov-Hausdorff topology, and the convergence is in smooth topology away
   from singularities.
   \end{itemize}
 Then this flow is tamed.
  \label{theoremin: justtamed}
  \end{theoremin}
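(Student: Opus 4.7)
The task is to exhibit a single integer $\nu$ for which $F_{\nu}(x,t)$ is uniformly bounded on $M\times[0,\infty)$. The upper bound is automatic: on any Kähler manifold with bounded Ricci curvature one has $\sum_{\beta}|S_{\nu,\beta}^{t}|^{2}_{h_{t}^{\nu}}\le C\nu^{n}$ by standard Bergman kernel/sub-mean value estimates, and Perelman's bounds along the Kähler Ricci flow (scalar curvature, diameter, non-collapsing) supply the ingredients uniformly in $t$. So the real task is a lower bound $F_{\nu}(x,t)\ge -C$, i.e.\ a uniform partial $C^{0}$-estimate along the flow.

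I would argue by contradiction. Suppose the flow is not tamed, so that for every admissible $\nu$ there exist $t_{i}\to\infty$ and $x_{i}\in M$ with $F_{\nu}(x_{i},t_{i})\to -\infty$. Using the weak compactness hypothesis and Perelman's estimates, pass to a subsequence so that $(M,g(t_{i}),x_{i})$ converges in the pointed Cheeger--Gromov sense to a pointed Q-Fano normal variety $(\hat M,\hat g,x_{\infty})$, with smooth convergence on compact subsets of the regular part. Since $K_{M}^{-\nu}$ is naturally compared with $K_{\hat M}^{-\nu}$ under the convergence and the Hermitian metrics $h_{t_{i}}^{\nu}$ converge to the corresponding metric on $\hat M$, a diagonal extraction (together with $L^{2}$-bounds on the $S_{\nu,\beta}^{t_{i}}$ coming from the orthonormality) produces limiting orthonormal sections $\hat S_{\nu,\beta}\in H^{0}(\hat M,K_{\hat M}^{-\nu})$.

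The next step is the key analytic input: on the Q-Fano normal limit $\hat M$ one has a partial $C^{0}$-estimate, namely
\[
\inf_{x\in\hat M}\frac{1}{\nu}\log\sum_{\beta}|\hat S_{\nu,\beta}|_{\hat h^{\nu}}^{2}(x)\ge -C
\]
for all $\nu$ sufficiently large and divisible. At a smooth point this is the usual peak section construction via Hörmander $L^{2}$-estimates. Near the singular set one uses that a Q-Fano normal variety has klt singularities, so that local plurisubharmonic potentials are integrable to some positive power; combined with a suitable cutoff around $x_{\infty}$ and an $L^{2}$-correction of $\bar\partial$, this produces a holomorphic section of $K_{\hat M}^{-\nu}$ with controlled $L^{2}$-norm and prescribed value at $x_{\infty}$, hence a uniform lower bound on the Bergman function. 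Transferring this lower bound back to $(M,g(t_{i}))$, by running the same Hörmander construction on the smooth approximants (which now have uniform geometry away from neighborhoods whose measure tends to $0$), contradicts $F_{\nu}(x_{i},t_{i})\to -\infty$ for the chosen $\nu$.

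The main obstacle is the uniform peak section construction across the singularities of $\hat M$ and its transfer to the smooth manifolds $M$. On $\hat M$ one must guarantee that the weight used in Hörmander has a locally integrable exponential (this is where the klt property enters), and one must control the error between the approximate section built on $M$ and its $\bar\partial$-corrected cousin uniformly as $t_{i}\to\infty$. A subsidiary difficulty is to check that the convergence of Hermitian metrics and of $H^{0}(K^{-\nu})$ is strong enough to compare Bergman functions on $M$ with those on $\hat M$ in a neighborhood of the degeneration locus; this relies on the smooth convergence on the regular part together with uniform $L^{2}$-bounds across the singular locus, which follow once the volume ratio bound and non-collapsing are in hand.
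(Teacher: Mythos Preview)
Your overall architecture matches the paper's: upper bound on $F_\nu$ from Moser iteration using Perelman's estimates and the Sobolev bound, contradiction argument, passage to a Q-Fano limit $\hat M$, a lower bound on the Bergman function of $\hat M$, and transfer back via H\"ormander. The transfer step is exactly Theorem~\ref{theorem: bundleconv} in the paper, and your description of it (lift sections from $\hat M$ to $M$ by projecting an approximate section and $\bar\partial$-correcting) is right.

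There is, however, a genuine logical gap in how you handle~$\nu$. You fix an admissible $\nu$, choose a bad sequence $(x_i,t_i)$ for that $\nu$, extract a limit $\hat M$, and then invoke a partial $C^0$-estimate on $\hat M$ ``for all $\nu$ sufficiently large and divisible''. But the $\nu'$ that works on $\hat M$ depends on $\hat M$, which in turn depends on the sequence, which was chosen \emph{after} fixing the original $\nu$. Transferring back gives you $F_{\nu'}(x_i,t_i)\ge -C$ along this particular sequence, which does not contradict $F_\nu(x_i,t_i)\to-\infty$ and does not give a uniform bound on $F_{\nu'}$ over the whole flow either: a bad sequence for $\nu'$ could converge to a different limit $\hat M'$ requiring yet another $\nu''$, and the argument never closes.

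The paper resolves this circularity with a multiplicativity trick that you are missing. One assumes that for \emph{every} $p_i=i!$ the function $F_{p_i}$ is unbounded below, picks $(x_i,t_i)$ with $F_{p_i}(x_i,t_i)<-p_i$, and passes \emph{this diagonal sequence} to a single limit $\hat M$ with a single good $\nu$. Along the sequence one then has a unit section $S_\nu^{(t_i)}\in H^0(K_M^{-\nu})$ with $|S_\nu^{(t_i)}|^2(x_i)>c_0/2$; taking $k$-th powers and normalizing yields, via the uniform $C^0$ bound $|S_\nu^{(t_i)}|\le C\nu^{n/2}$, that $F_{k\nu}(x_i,t_i)\ge -\tfrac{1}{k\nu}\bigl(2k\log C+nk\log\nu-k\log(c_0/2)+\log V\bigr)$. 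Since $\nu\mid i!$ for large $i$, setting $k=i!/\nu$ makes the right side decay like $-O(1/\nu)$ while the left side is $\le -i!$, a contradiction. This power trick is the missing idea; without it your contradiction does not close. Incidentally, on $\hat M$ you do not need a peak-section construction near klt singularities: the Q-Fano hypothesis already says some $K_{\hat M}^{-\nu}$ is very ample, which immediately gives $\inf_{\hat M}\sum_\beta|\hat S_{\nu,\beta}|^2>0$.
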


 In the case of Fano surfaces under continuous path, Tian proved a similar theorem.
 However, in his proof, every metric is a K\"ahler Einstein metric, so there are more
 estimates available. In particular, Ricci curvature is uniformly bounded there.
 Our proof here is more technical since we have no Ricci curvature control.
  The concept of Q-Fano variety is first defined in~\cite{DT}, it's
  a natural generalization of Fano manifold. A Q-Fano variety is an algebraic variety
  with a very ample line bundle whose restriction on the smooth part is the plurianticanonical line bundle.
  In the proof of  this theorem, H\"ormander's $L^2$-estimate of
  $\bar{\partial}$-operator, Perelman's fundamental estimates and
  the uniform control of Sobolev constants (c.f.~\cite{Ye},~\cite{Zhq}) play crucial roles.
  Actually,  Sobolev constants control and Perelman's estimates
  assure the uniform control of $\norm{|S|_{h_t^{\nu}}}{C^0(M)}$ and
  $\norm{|\nabla S|_{h_t^{\nu}}}{C^0(M)}$ whenever $S$ is a  unit norm
  holomorphic section in $H^0(K_M^{-\nu})$.  H\"ormander's
  $L^2$-estimate of $\bar{\partial}$-operator assures that the
  plurigenera is continuous under the sequential convergence.
  Therefore, for every fixed $\nu$, we have
  \begin{align*}
        \lim_{i \to \infty} \inf_{x \in M} e^{\nu F_{\nu}(x, t_i)}
  =\lim_{i \to \infty}  \inf_{x \in M} \sum_{\beta=0}^{N_{\nu}}
  \snorm{S_{\nu, \beta}^{t_i}}{h_{t_i}^{\nu}}^2(x)
  =\inf_{x \in \hat{M}}
     \sum_{\beta=0}^{N_{\nu}} \snorm{\hat{S}_{\nu,
     \beta}}{\hat{h}^{\nu}}^2(x).
  \end{align*}
  This equation relates the tamed condition to the property of every
  limit space.  If every limit space is a Q-Fano normal variety,
  we know
  \begin{align*}
   \inf_{x \in \hat{M}}
     \sum_{\beta=0}^{N_{\nu}} \snorm{\hat{S}_{\nu,
     \beta}}{h_{t_i}^{\nu}}^2(x)>0
  \end{align*}
  for some $\nu$ depending on $\hat{M}$.  Then a contradiction
  argument can show that $e^{\nu F_{\nu}}$ must be uniformly bounded
  from below for some large $\nu$. In other words, $F_{\nu}$ is
  uniformly bounded (the upper bound of $F_{\nu}$ is a corollary of the boundedness of
  $\norm{|S|_{h_t^{\nu}}}{C^0(M)}$) and the flow is tamed.\\

 As applications of Theorem~\ref{theoremin: nuconv} to
 Theorem~\ref{theoremin: justtamed}, we can show the convergence of
 \KRf on Fano surface $M$ when $c_1^2(M) \leq 4$.   Actually,  in~\cite{CW3},
 we proved the weak compactness of 2-dimensional \KRfd

 \begin{lemmain}(\cite{CW3})
    Suppose $\{ (M,g(t)), 0 \leq t < \infty \}$ is a K\"ahler Ricci
   flow solution on a Fano surface.  Then for any sequence
   $t_i \to \infty$, we have Cheeger-Gromov convergence
 \begin{align*}
   (M, g(t_i)) \stackrel{C^\infty}{\to} (\hat{M}, \hat{g})
 \end{align*}
 where $(\hat{M}, \hat{g})$ is a K\"ahler Ricci soliton orbifold
 with finite singularities. In particular, $\hat{M}$ is Q-Fano normal
 variety.
 \label{lemmain: weakcompactness}
 \end{lemmain}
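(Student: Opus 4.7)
The approach is to adapt the orbifold-compactness theory of Anderson, Bando--Kasue--Nakajima and Cheeger--Tian from the Einstein setting to the parabolic setting of a \KRf on a Fano surface. The starting point is the package of uniform estimates available along any such flow: Perelman's fundamental theorem gives a uniform bound on the scalar curvature, a uniform diameter bound, and uniform non-collapsing on unit balls, while the results of Ye and Zhang cited in the excerpt give a uniform Sobolev constant. From these, together with the fact that $c_1^2(M)$ and $c_2(M)$ are fixed topological invariants, one extracts a uniform $L^2$-bound on the full Riemann curvature tensor: in complex dimension $2$, the Gauss--Bonnet and signature identities express $\int_M |Rm|^2 \, \omega_t^2$ in terms of $\int_M |Ric|^2 \, \omega_t^2$, $\int_M R^2 \, \omega_t^2$ and topological data, and Perelman's $C^0$-bound on $R$ combined with the fixed cohomology class $[\omega_t]=2\pi c_1(M)$ controls the first two integrals.

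The core of the argument is a parabolic $\varepsilon$-regularity estimate: there exists $\varepsilon>0$ such that whenever $\int_{B_{g(t)}(x,r)}|Rm|^2 \, \omega_t^2 < \varepsilon$ on a parabolic cylinder of size $r$, the norm $|Rm|$ is uniformly bounded on the half-cylinder. This is proved by Moser iteration on the evolution equation of $|Rm|^2$, using the Bochner--\Weitzenbock identity and the uniform Sobolev inequality. For any sequence $t_i\to\infty$, the uniform $L^2$-bound on $|Rm|$ bounds the cardinality of the set of points where the local energy exceeds $\varepsilon$ by a fixed constant; after extracting a subsequence, one obtains a finite ``bad'' set $\mathcal{S}\subset \hat{M}$ and smooth Cheeger--Gromov convergence on $M\setminus \mathcal{S}$. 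Rescaling around each bad point produces a bubble that is a complete non-compact Ricci-flat \Kahler surface of Euclidean volume growth and quadratic curvature decay, and by Bando--Kasue--Nakajima such a bubble is ALE and compactifies to a quotient singularity. This identifies $(\hat{M},\hat{g})$ as an orbifold with finitely many singularities, with $\sconv$ convergence away from the singular set.

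It remains to identify $(\hat{M},\hat{g})$ as a \Kahler Ricci soliton orbifold and to verify that $\hat{M}$ is a Q-Fano normal variety. For the soliton structure, I would use the monotonicity and uniform boundedness of Perelman's $\mu$-functional along the \KRfc the sequence $(M, g(t_i))$ limits, on the smooth locus, to a gradient shrinking \Kahler Ricci soliton; by the orbifold structure the soliton equation extends across the singular set in the weak sense. For the Q-Fano conclusion, one notes that finite-order quotient singularities are klt and of complex codimension $\geq 2$ in the present finite-point situation; the anticanonical $\Q$-line bundle on the smooth locus thus extends to an ample $\Q$-line bundle on $\hat{M}$, yielding the desired Q-Fano normal variety structure.

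The main obstacle is the parabolic $\varepsilon$-regularity step. In the Einstein setting, Ricci is parallel and the Laplacian of $|Rm|^2$ is controlled directly by a quadratic reaction term; along the \KRf we have no uniform Ricci bound, so one must process the full evolution of $|Rm|^2$ and absorb the negative quadratic terms in the \Weitzenbock formula using only Perelman's $C^0$-bounds on $R$ and on the Ricci potential. A secondary difficulty is the neck analysis needed to ensure that the curvature concentrations at distinct bad points decouple in the limit and that no ``thin'' neck is lost between the bubble scale and the ambient scale; this must be carried out uniformly in $t$ using the time-independent Sobolev and non-collapsing constants supplied by Perelman and Ye--Zhang.
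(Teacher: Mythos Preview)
This lemma is not proved in the present paper; it is quoted from~\cite{CW3}, and the paper only sketches the ingredients used there. So there is no detailed proof to compare against, only the paper's description of the method. That description differs in emphasis from your outline: the paper says the argument in~\cite{CW3} relies on Perelman's pseudo-locality theorem and a new \emph{inverse pseudo-locality theorem} (ruling out bubbles that form and then disappear along the flow), together with a uniform volume ratio \emph{upper} bound on all scales obtained by adapting the arguments of Tian--Viaclovsky~\cite{TV1,TV2}. Your proposal instead follows the more classical Anderson/Bando--Kasue--Nakajima template, replacing the elliptic $\varepsilon$-regularity by a parabolic one via Moser iteration on the evolution of $|Rm|^2$. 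Both routes aim at the same orbifold compactness picture; the pseudo-locality approach trades analytic iteration for Perelman's geometric control of high-curvature regions in spacetime.

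Your outline is broadly reasonable, but there is one point you underplay which the paper singles out as essential: the uniform \emph{upper} bound $\Vol(B(x,r))\le K r^{4}$ on all scales. Perelman gives you the lower bound (non-collapsing), and the Sobolev constant gives some integral control, but neither yields the scale-invariant upper volume ratio directly in the absence of a Ricci lower bound. Without it, the neck analysis and the identification of the limit as an orbifold with finitely many singular points do not go through cleanly (this is exactly why the paper invokes~\cite{TV1,TV2}). Your claim that the uniform $L^2$ curvature bound follows from Chern--Gauss--Bonnet plus Perelman's scalar curvature bound is correct in this K\"ahler-surface setting (since $\int_M |Ric|^2\,\omega_t^2$ reduces to $\int_M R^2\,\omega_t^2$ and the topological quantity $c_1^2(M)$), so the energy side is fine; the missing piece is the volume upper ratio, and you should either supply it or acknowledge it as an input on the same footing as the Sobolev and non-collapsing constants.
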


  Moreover, the volume ratio upper bound is proved in the process of proving weak compactness.
 Therefore, Theorem~\ref{theoremin: justtamed} and Lemma~\ref{lemmain: weakcompactness}
 implies that
  every 2-dimensional \KRf is a tamed flow.  The authors remark that,
 in an unpublished work (c.f.~\cite{Se1},~\cite{FZ}), Tian has pointed out earlier
 the sequential convergence of the 2-dimensional \KRf to  K\"ahler
 Ricci soliton orbifolds under the Gromov-Hausdorff topology.   Under the extra condition that Ricci curvature
 is uniformly bounded along the flow, Lemma~\ref{lemmain: weakcompactness} was proved by Natasa Sesum in~\cite{Se1}.
 However, for our purpose of using Theorem~\ref{theoremin: justtamed},
 these convergence theorems are not sufficient (We need Cheeger-Gromov
 convergence without Ricci curvature bound condition).
 In the course of proof of this lemma, the fundamental work of
 G. Perelman on the Ricci flow (non-local collapsing theorem, pseudo-locality theorem and canonical
 neighborhood theorem)play critical roles.  Under some geometric constraints natural to our setting,
 we proved an inverse pseudo-locality theorem (heuristically speaking,
 no ``bubble" will disappear suddenly).   For that purpose, we need to have a uniform control of volume growth on
 all scales.  We found that the argument for volume ratio upper bound in the beautiful
  work [TV1] and [TV2] is very enlightening.\\

 In order to show the convergence of a 2-dimensional \KRfc
 we now only need to see if $\alpha_{\nu, k} (k=1, 2)$ are big enough to satisfy the
 requirements of Theorem~\ref{theoremin: nuconv} or Theorem~\ref{theoremin: nuconvr}.
 If $c_1^2(M) \leq 4$, one can show that either
 Theorem~\ref{theoremin: nuconv} or Theorem~\ref{theoremin: nuconvr}
 applies.   However, the convergence of \KRf on Fano surfaces $M$ are proved
 in~\cite{CW2} when $c_1^2(M)=2$ or $c_1^2(M)=4$.
 The only remained cases are $c_1^2(M)=1$ and $c_1^2(M)=3$.  So we
 concentrate on these two cases and have the following lemma.

 \begin{lemmain}
  Suppose $M$ is a Fano surface, $\nu$ is any positive integer.
  \begin{itemize}
  \item  If $c_1^2(M)=1$, then  $\alpha_{\nu, 1} \geq \frac{5}{6}$.
  \item  If $c_1^2(M)=3$, then  $\alpha_{\nu, 1} \geq \frac{2}{3}$,
      $\alpha_{\nu, 2}> \frac{2}{3}$.
  \end{itemize}
  \label{lemmain: localalpha}
 \end{lemmain}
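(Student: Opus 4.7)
The first step is to translate $\alpha_{\nu,1}$ into a log canonical threshold problem. Any $\varphi\in\mathscr P_{\nu,1}$ has the form $\varphi=\frac{1}{\nu}\log\|\tilde S\|_{h^\nu}^2$ for a unit $L^2$-norm section $\tilde S\in H^0(K_M^{-\nu})$, so $e^{-\alpha\varphi}$ coincides locally, up to a smooth positive factor, with $\|\tilde S\|_{h^\nu}^{-2\alpha/\nu}$, whose integrability at a point $x$ is exactly controlled by $\alpha_x(\tilde S)=\mathrm{lct}_x(Z(\tilde S))$. Using the scaling $\mathrm{lct}_x(\nu\Delta)=\frac{1}{\nu}\mathrm{lct}_x(\Delta)$, the question reduces to: for every effective $\mathbb Q$-divisor $\Delta\sim_{\mathbb Q}-K_M$ obtained as $\frac{1}{\nu}Z(\tilde S)$ and every $x\in M$, bound $\mathrm{lct}_x(\Delta)$ from below. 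A Tian-type uniform-integrability argument, using the $L^2$-normalisation together with compactness of $M$, then converts the pointwise LCT bound into the required global bound over $\mathscr P_{\nu,1}$.

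For $c_1^2(M)=1$ the goal reduces to $\mathrm{lct}_x(\Delta)\geq 5/6$ for every effective $\mathbb Q$-divisor $\Delta\sim_{\mathbb Q}-K_M$. I would combine two ingredients: (a) the intersection inequality $\mathrm{mult}_x\Delta\leq C\cdot\Delta=-K_M\cdot C$ whenever $C$ is an irreducible curve through $x$ not contained in $\mathrm{Supp}\,\Delta$, applied with $C$ a member of the anticanonical pencil or a $(-1)$-curve through $x$, which pins down $\mathrm{mult}_x\Delta$ in all but finitely many configurations; (b) adjunction along an irreducible component $E\subset\Delta$ through $x$, reducing the LCT estimate to a one-dimensional residue computation on $E$. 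The extremal configuration is a cuspidal rational member of $|-K_M|$ at its cusp, where a direct local log resolution gives $\mathrm{lct}=5/6$, with every other singular configuration strictly better.

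For $c_1^2(M)=3$, $M$ is a smooth cubic in $\mathbb{CP}^3$ and $-K_M$ is the hyperplane class, so effective $\Delta\sim_{\mathbb Q}-K_M$ is a $\mathbb Q$-combination of plane sections. By enumerating singular plane sections (triangles of lines, three concurrent coplanar lines, irreducible nodal or cuspidal cubics) and computing LCT in each case, the minimum $2/3$ is attained exactly at $\Delta=\frac{1}{3}(L_1+L_2+L_3)$ when three coplanar lines meet at an Eckhardt point $p$, giving $\alpha_{\nu,1}\geq 2/3$. For the strict inequality $\alpha_{\nu,2}>2/3$, the integrand is $(\|\tilde S_0\|^2+\|\tilde S_1\|^2)^{-\alpha/\nu}$, whose local integrability at $p$ is governed by the LCT of the ideal generated by the two sections; if both orthonormal sections saturated $2/3$ at $p$ then their joint base locus would contain $\nu(L_1+L_2+L_3)$, forcing both into the one-dimensional subspace $H^0(K_M^{-\nu}\otimes\SO(-\nu(L_1+L_2+L_3)))$ and contradicting orthonormality. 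Since the set of Eckhardt points is finite and the orthogonality slack depends continuously on the configuration, a compactness argument yields a uniform $\epsilon>0$ with $\alpha_{\nu,2}\geq 2/3+\epsilon$.

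The main obstacle I anticipate is the sharp local analysis at the extremal configurations. In the degree-$1$ case, ruling out an LCT worse than $5/6$ requires a careful log resolution of the cuspidal anticanonical member together with a non-trivial check that no more degenerate $\mathbb Q$-divisor in $|-K_M|_{\mathbb Q}$ exists; the pencil base point complicates the intersection-theoretic control used in step (a) and forces a direct local computation there. In the degree-$3$ case the inequality $\alpha_{\nu,1}\geq 2/3$ is relatively direct, but making the orthogonality-versus-vanishing-order trade-off quantitative at every Eckhardt point is where the $L^2$ normalisation in the definition of $\mathscr P_{\nu,2}$ is genuinely exploited, and this strict improvement is the subtle step of the lemma.
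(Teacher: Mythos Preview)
Your proposal is correct and follows essentially the same route as the paper: reduce $\alpha_{\nu,k}$ to pointwise lower bounds on $\alpha_x(S)=\mathrm{lct}_x(Z(S))$, identify the extremal configurations (the cusp for degree~$1$, the Eckardt triangle for degree~$3$), and deduce $\alpha_{\nu,2}>2/3$ from the one-dimensionality of the extremal sections. The execution differs slightly: rather than working with $\mathbb{Q}$-divisors and adjunction directly, the paper first settles the case $\nu=1$ by pushing $Z(S)$ down to a cubic curve in $\mathbb{CP}^2$ and reading off $\alpha_x$ from an explicit table of singular cubic types, then extends to arbitrary $\nu$ by induction on $m$ --- if $\alpha_x(S)\le\frac{2}{3m}$ (resp.\ $<\frac{5}{6m}$) for $S\in H^0(K_M^{-m})$, intersection with a suitable anticanonical curve through $x$ forces an anticanonical factor $S'$ to split off, and in the degree-$3$ case the rigidity statement $S=(S')^m$ (exactly your one-dimensionality claim) is obtained through a careful case analysis of how many lines on the cubic surface pass through $x$. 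Your anticipated ``main obstacle'' is indeed where the paper spends almost all of its effort.
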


 Actually, the value of $\alpha_{\nu, 1}$ was calculated by Ivan Cheltsov (c.f.~\cite{Chl})
 for every Fano surface.  The value of $\alpha_{\nu, 2}$  was also calculated
 for every cubic surface ($c_1^2(M)=3$) by Yalong Shi (c.f.~\cite{SYl}).
 For the convenience of readers, we give an elementary proof at the end of this paper.\\

 Therefore, Theorem~\ref{theoremin: nuconv} and
 Theorem~\ref{theoremin: nuconvr} applies respectively and show the
 existence of KE metrics on $M$ whenever $c_1^2(M)=1$ or $3$.
 Combining this result with the results we proved in~\cite{CW1}
 and~\cite{CW2}, we can give an alternative proof of the celebrated theorem of Tian:

  \begin{thm}[\cite{Tian90}]
  A Fano surface $M$ admits a K\"ahler Einstein metric  if and only if
   $Aut(M)$ is reductive.
  \label{thm: tian90}
  \end{thm}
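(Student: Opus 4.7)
The plan is to prove both directions separately. The necessity, that reductivity of $\Aut(M)$ is required for the existence of a KE metric, is Matsushima's theorem~\cite{Ma}, so all the work lies in the sufficiency direction.

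For sufficiency I would appeal to the classification of del Pezzo surfaces: up to diffeomorphism, a Fano surface is either $\CP^2$, $\PxP$, or $\Blow{k}$ with $1 \leq k \leq 8$, and the reductivity of $\Aut(M)$ excludes precisely $\Blow{1}$ and $\Blow{2}$. Among the remaining cases, $\CP^2$ and $\PxP$ carry explicit product or Fubini--Study KE metrics; the cases $c_1^2(M) \in \{4,5,6\}$ are covered by the earlier work~\cite{Tian87, TY, Siu}; and $c_1^2(M) \in \{2,4\}$ is established in~\cite{CW2} via the K\"ahler Ricci flow. This reduces the theorem to proving the existence of a KE metric when $c_1^2(M) \in \{1,3\}$.

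For these two remaining cases I would start the \KRf from an arbitrary initial metric in $c_1(M)$ and show convergence to a KE metric in two steps. First, I would verify the flow is tamed by applying Theorem~\ref{theoremin: justtamed}: Lemma~\ref{lemmain: weakcompactness} supplies the required weak compactness of the flow, with limits that are Q-Fano normal varieties, while the volume ratio upper bound is established along the way in~\cite{CW3}. Second, I would invoke the convergence criteria for tamed flows. When $c_1^2(M)=1$, Lemma~\ref{lemmain: localalpha} yields $\alpha_{\nu, 1} \geq 5/6 > 2/3 = n/(n+1)$, so Theorem~\ref{theoremin: nuconv} applies directly. When $c_1^2(M)=3$, the same lemma gives $\alpha_{\nu, 1} \geq 2/3$ and $\alpha_{\nu, 2} > 2/3$; plugging $n=2$ and $\alpha_{\nu, 2} > 2/3$ into the threshold expression $1/\bigl(2 - (n-1)/((n+1)\alpha_{\nu, 2})\bigr)$ produces a number strictly smaller than $2/3 \leq \alpha_{\nu, 1}$, so Theorem~\ref{theoremin: nuconvr} applies. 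In either case the flow converges exponentially fast to a KE metric in $c_1(M)$.

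All of the hard analytic input has already been packaged into Theorem~\ref{theoremin: justtamed}, Theorems~\ref{theoremin: nuconv}--\ref{theoremin: nuconvr}, and Lemmas~\ref{lemmain: weakcompactness}--\ref{lemmain: localalpha}. Consequently the main obstacle here is not analytic but organizational: one must confirm that the classification-by-$c_1^2$ really exhausts all Fano surfaces with reductive $\Aut(M)$, and verify the slightly delicate numerical inequality in the $c_1^2(M)=3$ case that allows Theorem~\ref{theoremin: nuconvr} to fire despite $\alpha_{\nu, 1}$ only barely clearing the threshold $n/(n+1)$. Once this bookkeeping is done, the proof of Theorem~\ref{thm: tian90} is complete.
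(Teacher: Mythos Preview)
Your proposal is correct and follows essentially the same route as the paper: Matsushima for necessity, classification of del Pezzo surfaces for sufficiency, reduction to $c_1^2(M)\in\{1,3\}$, and then the tamed-flow machinery (Theorem~\ref{theoremin: justtamed} via Lemma~\ref{lemmain: weakcompactness}, followed by Theorem~\ref{theoremin: nuconv} or Theorem~\ref{theoremin: nuconvr} via Lemma~\ref{lemmain: localalpha}). Your numerical check in the $c_1^2=3$ case is exactly right.

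The one organizational difference worth flagging: the paper's stated aim is a \emph{pure Ricci flow} proof, so for the remaining cases it invokes only \cite{CW1} and \cite{CW2} (see Section~4.1: ``the convergence of 2-dimensional \KRf was studied in~\cite{CW1} and~\cite{CW2} for all cases except $c_1^2(M)=1$ or $3$''), rather than the continuity-method results \cite{Tian87, TY, Siu} that you cite for $c_1^2\in\{5,6\}$. Your version still proves the theorem, but it mixes methods and so does not quite deliver the advertised ``Ricci flow proof of the Calabi conjecture.'' If you want to match the paper's framing, replace those citations with \cite{CW1, CW2}.
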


This solved a  famous problem of Calabi for Fano surfaces \cite{Ca}.
 This work of Tian clearly involves deep understanding of many aspects of K\"ahler
 geometry as well as its intimate connection to algebraic geometry.
 It is one of the few highlights in
 K\"ahler geometry which deserve new proofs by Ricci flow.   On the other hand, the K\"ahler Ricci flow
 is  a natural way to understand Calabi conjecture in Fano setting.
 Following Yau's estimate (\cite{Yau78}),
   H. D. Cao (\cite{Cao85})
  proved that  the \KRF with smooth initial metric always exists globally.
  On a KE manifold, the first named author and Tian
  showed that \KRF converges exponentially fast toward the KE
  metric if the initial metric has positive bisectional
  curvature (c.f.~\cite{CT1},~\cite{CT2}).
  Using his famous $\mu$-functional,  Perelman proved that
  scalar curvature, diameter and normalized Ricci potentials are all uniformly bounded
  along \KRf (c.f.~\cite{SeT}).  These fundamental estimates of G. Perelman opens the door for
  a more qualitative analysis of singularities formed in the \KRfd
  As a corollary of his estimates, G. Perelman announced that the \KRf
  will always converge to the KE metric on every KE manifold.
  The first written proof of this statement appeared in \cite{TZ} where
  Tian and Zhu also generalized it to K\"ahler manifolds admitting K\"ahler Ricci solitons.
  In our humble view,
  the  estimates  of  G. Perelman  makes the flow approach a plausible one in terms of understanding
  Calabi conjecture in Fano setting.  We hope that this modest progress in K\"ahler Ricci flow
  will attract more attentions to the renown Hamilton-Tian conjecture. Namely, any K\"ahler Ricci flow
  will converge to some K\"ahler Ricci solitons with mild singularities in Cheeger-Gromov topology,
   perhaps of different complex structures. \\

   The application of strong partial $C^0$-estimate is one of the crucial components of this
   paper.  It sets up the frame
   work of our proof for the convergence of 2-dimensional \KRfd
   This estimate originates from the strong partial $C^0$-estimate along continuous path in Tian's original
   proof (c.f.~\cite{Tian90}).
   He also conjectured that the strong partial $C^0$-estimate holds along continuous path in higher
    dimensional K\"ahler manifolds (c.f.~\cite{Tian91}).\\

 The organization of this paper is as follows. In section 2,
 along each tamed flow, we reduce the
 $C^0$-estimate of the potential function $\varphi$ to the calculation of
 local $\alpha$-invariants of sections $S \in H^0(K_M^{-\nu})$.
 In section 3, we study the basic properties of pluri-anticanonical holomorphic
 sections along \KRfd Here we discuss the applications of H\"{o}rmander's
  $L^2$-estimate of $\bar{\partial}$-operator and we deduce the uniform bounds of
  $\snorm{S}{h_t^{\nu}}$ and $\snorm{\nabla S}{h_t^{\nu}}$. Using these
  estimates, we give a justification theorem of the tamed condition.
 In section 4,  we calculate $\alpha_{\nu, k}(M) (k=1,2)$ when $c_1^2(M)=1$ or
 $3$ and show that their values are big enough to obtain the $C^0$-estimate
 of the evolving potential function $\varphi(t)$.\\

 \begin{rmk}
  In the subsequent paper~\cite{Wang}, we will apply these methods to \KRf on
  orbifold Fano surfaces. As an application, we find some new
  K\"ahler Einstein orbifolds.  In particular, we prove the following
  conjecture (c.f.~\cite{Kosta}).  Let $Y$ be a degree $1$ del Pezzo
  surface having only Du Val singularities of type $\A_n$ for $n \leq
  6$, then $Y$ admits a K\"ahler Einstein metric.
 \end{rmk}

 \noindent {\bf Acknowledgment}
 This work benefits from \cite{Tian90} conceptually. The second named author is very grateful
 to G. Tian for many insightful and inspiring conversations with him.
 He also  would like to thank J. Cheeger,
 B.  Chow, K. Grove, J.P. Bourguignon for their interests in this work.
 The first named author would like to thank
 S. K. Donaldson for  lengthy discussions in this and related projects in K\"ahler geometry.

 \section{Estimates Along \KRF}

 \subsection{Basic K\"ahler Geometry}
 Let $M$ be an $n$-dimensional compact K\"ahler manifold. A K\"ahler
 metric can be given by its K\"ahler form $\omega$ on $M$. In local
 coordinates $z_1, \cdots, z_n$, this $\omega$ is of the form
 \[
 \omega = \sqrt{-1} \displaystyle \sum_{i,j=1}^n\;g_{i \overline{j}}
 d\,z^i\wedge d\,z^{\overline{j}}  > 0,
 \]
 where $\{g_{i\overline {j}}\}$ is a positive definite Hermitian
 matrix function. The K\"ahler condition requires that $\omega$ is a
 closed positive (1,1)-form.  Given a K\"ahler metric $\omega$, its
 volume form  is
 \[
   \omega^n = {1\over {n!}}\;\left(\sqrt{-1} \right)^n \det\left(g_{i \overline{j}}\right)
  d\,z^1 \wedge d\,z^{\overline{1}}\wedge \cdots \wedge d\,z^n \wedge d\,z^{\overline{n}}.
 \]
 The curvature tensor is
 \[
  R_{i \overline{j} k \overline{l}} = - {{\partial^2 g_{i \overline{j}}} \over
 {\partial z^{k} \partial z^{\overline{l}}}} + \displaystyle
 \sum_{p,q=1}^n g^{p\overline{q}} {{\partial g_{i \overline{q}}}
 \over {\partial z^{k}}}  {{\partial g_{p \overline{j}}} \over
 {\partial z^{\overline{l}}}}, \qquad\forall\;i,j,k,l=1,2,\cdots n.
 \]
 The Ricci curvature form is
 \[
   {\rm Ric}(\omega) = \sqrt{-1} \displaystyle \sum_{i,j=1}^n \;R_{i \overline{j}}(\omega)
 d\,z^i\wedge d\,z^{\overline{j}} = -\sqrt{-1} \partial
 \overline{\partial} \log \;\det (g_{k \overline{l}}).
 \]
 It is a real, closed (1,1)-form and $[Ric]=2\pi c_1(M)$.\\

  From now on we we assume $M$ has positive first Chern class, i.e., $c_1(M)>0$.
  We call $[\omega]$ as a canonical K\"ahler class if $[\omega]=[Ric]=2\pi c_1(M)$.
    If we require the initial metric is in canonical class,
    then the normalized Ricci flow (c.f. \cite{Cao85}) on $M$ is
\begin{equation}
  {{\partial g_{i \overline{j}}} \over {\partial t }} = g_{i \overline{j}}
  - R_{i \overline{j}}, \qquad\forall\; i,\; j= 1,2,\cdots,n.
\label{eq:kahlerricciflow}
\end{equation}
  Denote $\omega = \omega_{g(0)}$, $\omega_{g(t)} = \omega + \st \ddb \varphi_t$.
 $\varphi_t$ is called the K\"ahler potential and sometime it is denoted as
 $\varphi$ for simplicity.  On the level of K\"ahler potentials, \KRf becomes
\begin{equation}
   {{\partial \varphi} \over {\partial t }}
   =  \log {{\omega_{\varphi}}^n \over {\omega}^n } + \varphi  + u_{\omega},
\label{eq:flowpotential}
\end{equation}
where $u_{\omega}$ is defined by
\begin{align*}
  {\rm Ric}(\omega)- \omega = -\sqrt{-1} \partial \overline{\partial} u_{\omega},
    \; {\rm and}\;\displaystyle \int_M \;
  (e^{-u_{\omega}} - 1)  {\omega}^n = 0.
 \end{align*}
 As usual, the flow equation (\ref{eq:kahlerricciflow}) or
 (\ref{eq:flowpotential}) is referred as the \KRf in canonical class
 of $M$. It is proved by Cao~\cite{Cao85}, who followed Yau's
 celebrated work~\cite{Yau78}, that this flow exists globally for
 any smooth initial K\"ahler metric in the canonical class.

In this note, we only study \KRf in the canonical class. For the
   simplicity of notation, we may not mention that the flow is in
   canonical class every time.

 Along \KRf, the evolution equations of curvatures are listed in
  Table~\ref{table: cevformula}.

 \begin{table}[h]
 \begin{center}
 \begin{tabular}[b]{|c @{} l|}
 \hline
 $\displaystyle \D{}{t}R_{i \overline{j} k
 \overline{l}}$ & =$\displaystyle \triangle R_{i \overline{j} k \overline{l}} +
 R_{i \overline{j} p \overline{q}} R_{q \overline{p} k \overline{l}}
 - R_{i \overline{p} k \overline{q}} R_{p \overline{j} q
 \overline{l}} + R_{i
 \overline{l} p \overline{q}} R_{q \overline{p} k \overline{j}} + R_{i \overline{j} k
 \overline{l}}$\\
  &\quad $-{1\over 2} \left( R_{i \overline{p}}R_{p \overline{j} k
 \overline{l}}  + R_{p \overline{j}}R_{i \overline{p} k \overline{l}}
 + R_{k \overline{p}}R_{i \overline{j} p \overline{l}} + R_{p
 \overline{l}}R_{i \overline{j} k \overline{p}} \right)$.\\
 $\displaystyle \D{}{t} R_{i\bar{j}}$  &  =$\displaystyle \triangle
  R_{i\bar j} + R_{i\bar j p \bar q} R_{q \bar p} -R_{i\bar p} R_{p \bar j}$.\\
 $\displaystyle \D{}{t} R$ & =$\displaystyle \triangle R + R_{i\bar j} R_{j\bar i}- R$.\\
 \hline
 \end{tabular}
 \end{center}
 \caption{Curvature evolution equations along \KRf}
 \label{table: cevformula}
 \end{table}

 Let $\mathscr{P}(M, \omega)= \{\varphi | \omega + \st \ddb \varphi \}$. It is shown in~\cite{Tian87} that there is a small constant $\delta>0$ such that
 \begin{align*}
     \sup_{\varphi \in \mathscr{P}(M, \omega)}
    \frac{1}{V} \int_M e^{-\delta(\varphi -\sup_M \varphi)} \omega^n < \infty.
 \end{align*}
 The supreme of such $\delta$ is called the $\alpha$-invariant of $(M, \omega)$ and
 it is denoted as $\alpha(M, \omega)$.
 Let $G$ be a compact subgroup of $Aut(M)$ and $\omega$ is a $G$-invariant form. We denote
 \begin{align*}
   \mathscr{P}_G(M, \omega)
   =\{\varphi | \omega + \st \ddb \varphi >0, \varphi \; \textrm{is invariant under} \; G
   \}.
 \end{align*}
 Similarly, we can define $\alpha_G(M, \omega)$.  Actually, $\alpha_G(M, \omega)$ is an algebraic invariant.
 It is called global log canonical threshold $lct(X, G)$ by algebraic geometers.
 See~\cite{ChS} for more details.

\subsection{Known Estimates along General \KRF}

 There are a lot of estimates along \KRf in the literature. We list some of them
 which are important to our arguments.

 \begin{proposition}[Perelman, c.f.~\cite{SeT}]
 Suppose $\{(M^n, g(t)), 0 \leq t < \infty \}$ is a \KRf solution. There
 are two positive constants $\mathcal{B}, \kappa$ depending only on this flow such that the
 following two estimates hold.
 \begin{enumerate}
 \item Under metric $g(t)$,  let $R$ be the scalar curvature,
 $-u$ be the normalized Ricci potential, i.e.,
 \begin{align*}
 Ric-\omega_{\varphi(t)}= - \st \ddb{u}, \quad
 \frac{1}{V} \int_M e^{-u} \omega_{\varphi(t)}^n=1.
 \end{align*}
 Then we have
 \begin{align*}
      \norm{R}{C^0} + \diam M +
      \norm{u}{C^0} + \norm{\nabla u}{C^0} < \mathcal{B}.
 \end{align*}
 \item  Under metric $g(t)$,  $ \displaystyle
      \frac{\Vol(B(x, r))}{r^{2n}} > \kappa$ for every
       $r \in (0, 1)$, $(x, t) \in M \times [0, \infty)$.
 \end{enumerate}
 \label{proposition: perelman}
 \end{proposition}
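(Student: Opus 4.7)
The strategy follows Perelman's original approach, which crucially exploits the monotonicity of the $\mu$-entropy functional under Ricci flow together with the fact that the \KRf in the canonical class is essentially a gradient flow for a fixed modified functional.

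First I would introduce Perelman's $\mathcal{W}$-functional
$$\mathcal{W}(g,f,\tau) = \int_M \left[\tau(R+|\nabla f|^2)+f-2n\right] (4\pi\tau)^{-n} e^{-f}\, dV_g,$$
and the associated $\mu(g,\tau)=\inf\{\mathcal{W}(g,f,\tau) : (4\pi\tau)^{-n}\int_M e^{-f} dV_g = 1\}$. A direct computation shows that along the normalized \KRfc $\mu(g(t),\tfrac12)$ is monotone non-decreasing in $t$, so $\mu(g(t),\tfrac12)\ge \mu(g(0),\tfrac12)$ uniformly. From this uniform lower bound, the $\kappa$-noncollapsing statement (2) follows by a standard scaling argument: if a ball $B_{g(t)}(x,r)$ with $r\le 1$ were too thin, one constructs an approximate minimizer of $\mathcal{W}$ supported near $x$ whose $\mathcal{W}$-value is driven below any threshold, contradicting the uniform lower bound.

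The next step is the uniform bound on the normalized Ricci potential $u$. Along \KRfc one checks that $u$ evolves as $\partial_t u = \Lap u + u - a(t)$ where $a(t)$ is a normalizing function; the flow invariant $\tfrac{1}{V}\int_M e^{-u}\omega^n=1$ forces $|a(t)|\le C$. To bound $u$, I would adapt a differential Harnack technique: apply the parabolic maximum principle to a quantity of the form $|\nabla u|^2 + u - R$, exploiting the Bochner identity and the relation $\Lap u = R-n$. This yields a pointwise inequality $R+|\nabla u|^2 \le C(u-\inf_M u)+C'$, which, combined with the logarithmic Sobolev inequality coming from the $\mu$-entropy bound, forces $u$, $|\nabla u|$, and $R$ to all be uniformly bounded.

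Given control of $|\nabla u|$ and $R$, the diameter estimate is the last ingredient. Here I would argue using (2) together with the bound on $|\nabla u|$: the integral $\int_M e^{-u}\omega^n=V$ constrains the sublevel set $\{u\le u(p_0)+1\}$ (for a minimum point $p_0$) to contain a ball of definite $g(t)$-volume; the $|\nabla u|$ bound together with non-collapsing then forces the whole manifold to sit inside a $g(t)$-ball of uniformly bounded radius around $p_0$. The main obstacle in the whole argument is the passage from the entropy bound to the simultaneous control of $u$ and $|\nabla u|$: the evolution equation for $u$ is coupled to the curvature through the Bochner term, and closing the loop requires a delicate choice of auxiliary function and careful tracking of the normalizing constant $a(t)$. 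A complete written exposition, which we follow, appears in Sesum--Tian~\cite{SeT}.
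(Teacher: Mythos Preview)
The paper does not prove this proposition at all: it is listed in Section~2.2 under ``Known Estimates along General \KRF'' and simply quoted with the attribution ``Perelman, c.f.~\cite{SeT}'', so there is no in-paper argument to compare your sketch against. Your outline is a faithful summary of the standard Perelman argument as written up in~\cite{SeT} (entropy monotonicity $\Rightarrow$ noncollapsing, then a maximum-principle/Harnack estimate on an auxiliary quantity built from $u$, $|\nabla u|^2$ and $R$, then the diameter bound via noncollapsing and the gradient bound), which is exactly what the citation points to; note only the minor sign slip $\triangle u = n - R$ rather than $R-n$, coming from tracing $Ric-\omega = -\st\ddb u$.
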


 After this fundmental work of G. Perelman, many interesting papers appear during this
   period. We include a few references here for the convenience of readers:  ~\cite{CH},~\cite{CST},
   ~\cite{CW1},~\cite{CW2},~\cite{Hei},~\cite{PSS},
  ~\cite{PSSW1},~\cite{PSSW2},~\cite{Ru},~\cite{RZZ},~\cite{Se1},~\cite{Se2},~\cite{TZs},etc.
  In this subsection, we cite a few results below which are directly related to our work here.

 \begin{proposition}[\cite{Zhq}, \cite{Ye}]
 There is a uniform Sobolev constant $C_S$ along the \KRf solution
 $\{(M^n, g(t)), 0 \leq t <\infty \}$. In other words, for every
 $f \in C^{\infty}(M)$, we have
 \begin{align*}
       (\int_M |f|^{\frac{2n}{n-1}}
       \omega_{\varphi}^n)^{\frac{n-1}{n}}
       \leq
       C_S\{\int_M |\nabla f|^2 \omega_{\varphi}^n
       + \frac{1}{V^{\frac{1}{n}}} \int_M |f|^2  \omega_{\varphi}^n\}.
 \end{align*}
 \label{proposition: sobolev}
 \end{proposition}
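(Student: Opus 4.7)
The plan is to follow the strategy due independently to R. Ye and Q. Zhang, which derives a uniform Sobolev inequality along the Kähler-Ricci flow from a uniform logarithmic Sobolev inequality obtained via the monotonicity of Perelman's $\mathcal{W}$-functional. The two key inputs are Perelman's entropy monotonicity along the unnormalized Ricci flow, and the uniform bounds on scalar curvature, diameter and Ricci potential provided by Proposition~\ref{proposition: perelman}.

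First I would set up Perelman's entropy adapted to the normalized Kähler-Ricci flow (\ref{eq:kahlerricciflow}). Concretely, consider the functional
$$
\mathcal{W}(g,f,\tau) = \int_M \bigl[\tau(R+|\nabla f|^2) + f - 2n\bigr](4\pi\tau)^{-n} e^{-f}\,\omega_g^n
$$
with the constraint $\int_M (4\pi\tau)^{-n} e^{-f}\,\omega_g^n=1$, and its infimum $\mu(g,\tau)$. The natural approach is to reparametrize: if $g(t)$ solves the normalized flow, then $\tilde g(s)=e^{-s}g(t(s))$ for an appropriate rescaling $t(s)$ solves the unnormalized Ricci flow, and Perelman's monotonicity of $\mu(\tilde g,\tilde \tau)$ on this rescaled flow transfers (after tracking how $\tau$ rescales) to a uniform lower bound $\mu(g(t),\tau)\ge -C$ for every $\tau$ in a fixed compact interval $[\tau_0,\tau_1]\subset(0,\infty)$, with $C$ independent of $t$.

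Second, using $|R|\le \mathcal{B}$ from Proposition~\ref{proposition: perelman}, the scalar-curvature term in $\mathcal{W}$ is absorbed into a uniformly bounded constant. The lower bound on $\mu(g(t),\tau)$ then yields a uniform log-Sobolev inequality: for a fixed $\tau_0>0$ and every smooth $u$ with $\int_M u^2\,\omega_{g(t)}^n = V$,
$$
\int_M u^2 \log u^2\,\omega_{g(t)}^n \;\le\; \tau_0 \int_M |\nabla u|^2\,\omega_{g(t)}^n + C',
$$
with $C'$ independent of $t$. I would then invoke the standard Davies-Gross machinery converting a log-Sobolev inequality in real dimension $2n$ into ultracontractivity of the heat semigroup $e^{-s\Delta_{g(t)}}$, which in turn produces the Sobolev embedding $W^{1,2}\hookrightarrow L^{2n/(n-1)}$ with a uniform constant. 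The zero-order term $V^{-1/n}\int|f|^2$ on the right-hand side of the stated inequality arises from handling the constant mode of the Laplacian, using the uniform diameter bound from Proposition~\ref{proposition: perelman}.

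The main obstacle is Step 1: the rescaling needed to convert the normalized flow into the unnormalized Ricci flow maps $t\in[0,\infty)$ to a finite parameter interval $s\in[0,S_*)$, so one must verify that Perelman's monotonicity of $\mu(\tilde g(s),\tilde\tau)$ survives as $s\to S_*$, and that the auxiliary parameter $\tilde\tau$ can be chosen so as to yield a bound on the original $\mu(g(t),\tau)$ for all large $t$. Equivalently, one can work directly with the normalized flow and show that the extra contribution arising from the $+g_{i\bar j}$ term in (\ref{eq:kahlerricciflow}) only shifts $\mathcal{W}$ by quantities controlled by $\tau$ and $\norm{R}{C^0}$; either route reduces the problem to a computation. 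Once the uniformity of $\mu(g(t),\tau)$ is established, Steps 2 and 3 are by now standard consequences of Perelman's estimates.
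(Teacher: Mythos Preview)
The paper does not give its own proof of this proposition; it is simply quoted from the cited references \cite{Zhq} and \cite{Ye}. Your outline is essentially the argument carried out in those papers: Perelman's $\mu$-functional monotonicity yields a uniform lower bound $\mu(g(t),\tau)\ge -C$, which unpacks as a uniform logarithmic Sobolev inequality along the flow, and then the Davies heat-semigroup machinery upgrades this to the stated $L^2$-Sobolev inequality.

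One minor correction to your sketch: the argument in \cite{Zhq} and \cite{Ye} does not actually require the full strength of Proposition~\ref{proposition: perelman}. Only a uniform \emph{lower} bound on the scalar curvature is needed to absorb the $R$-term in $\mathcal{W}$, and this follows directly from the maximum principle applied to the evolution equation $\partial_t R = \Delta R + |Ric|^2 - R$ (Table~\ref{table: cevformula}), independently of Perelman's deeper estimates. Likewise, the zero-order term $V^{-1/n}\int |f|^2$ on the right-hand side drops out of the heat-kernel analysis automatically and does not rely on the diameter bound. So your Step~2 and the last sentence of Step~3 invoke more than is necessary, but this does not affect the validity of the overall strategy.
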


  \begin{proposition}[c.f. \cite{TZ}]
   There is a uniform weak Poincar\`e constant $C_P$ along the \KRf
   solution $\{(M^n, g(t)), 0 \leq t < \infty \}$. Namely, for every
   nonnegative function $f \in C^{\infty}(M)$, we have
   \begin{align*}
        \frac{1}{V} \int_M f^2 \omega_{\varphi}^n   \leq
          C_P\{\frac{1}{V} \int_M |\nabla f|^2 \omega_{\varphi}^n
           + (\frac{1}{V} \int_M f  \omega_{\varphi}^n)^2\}.
   \end{align*}
 \label{proposition: poincare}
 \end{proposition}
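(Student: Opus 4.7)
The plan is to prove the classical Poincar\'e inequality
\[
\int_M (f - \bar{f})^2 \,\omega_{\varphi}^n \leq C \int_M |\nabla f|^2 \,\omega_{\varphi}^n, \qquad \bar{f} := V^{-1}\int_M f\,\omega_{\varphi}^n,
\]
uniformly in $t$; since $\int_M f^2\,\omega_{\varphi}^n = \int_M (f-\bar f)^2\,\omega_{\varphi}^n + V \bar f^2$ and the total volume $V$ is preserved by the flow, this implies the stated weak Poincar\'e inequality (in particular for nonnegative $f$) with $C_P=\max(C,1)$. Equivalently, the task reduces to a uniform lower bound on the first nonzero Neumann eigenvalue $\lambda_1(g(t))$ of the Laplacian along the flow.

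I would argue by contradiction. If $\lambda_1(g(t_i)) \to 0$ for some $t_i \to \infty$, take $L^2$-normalized mean-zero eigenfunctions $f_i$, so that $\|\nabla f_i\|_{L^2}^2 = \lambda_1(g(t_i)) \to 0$ while $\|f_i\|_{L^2}=1$. The uniform Sobolev inequality of Proposition~\ref{proposition: sobolev}, combined with Moser iteration applied to $\Lap f_i = -\lambda_i f_i$, produces a uniform $L^\infty$ bound $\|f_i\|_{C^0}\leq C$. One then needs to exhibit an obstruction forcing such an $f_i$ to approach a nonzero constant, while the mean-zero normalization forces the limit to be $0$.

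To close the contradiction, I would bypass Gromov--Hausdorff compactness (unavailable without a Ricci lower bound and hence outside the Cheeger--Colding framework) and instead derive a uniform logarithmic Sobolev inequality from Perelman's $\mu$-functional. Because $\mu(g,\tau)$ is monotone nondecreasing along the \emph{unnormalized} Ricci flow, and the normalized \KRf is obtained from it by an explicit time reparametrization and rescaling, one deduces a uniform lower bound $\mu(g(t),\tau_0) \geq -\Lambda$ for a suitably fixed $\tau_0>0$. This in turn yields a uniform logarithmic Sobolev inequality of the form
\[
\int_M f^2 \log\!\frac{f^2 V}{\int_M f^2 \omega_{\varphi}^n}\,\omega_{\varphi}^n \leq 4\tau_0 \int_M |\nabla f|^2 \,\omega_{\varphi}^n + C\int_M f^2 \,\omega_{\varphi}^n,
\]
from which Rothaus's linearization near constants (taking $f = 1+\varepsilon g$ with $\bar{g}=0$ and expanding to order $\varepsilon^2$) extracts the spectral gap $\lambda_1 \geq c > 0$, contradicting the assumed $\lambda_1(g(t_i))\to 0$.

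The main obstacle is precisely the uniform lower bound $\mu(g(t),\tau_0)\geq -\Lambda$ along the normalized flow: Perelman's monotonicity is stated for the unnormalized equation, and the transfer to the normalized setting demands careful bookkeeping of the time change and of the scalar--curvature and diameter bounds of Proposition~\ref{proposition: perelman} in order to keep $\tau_0$ bounded away from both $0$ and $\infty$ on the entire half-line $[0,\infty)$; once that is secured, the log-Sobolev $\Rightarrow$ Poincar\'e step is routine.
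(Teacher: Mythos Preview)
The paper itself does not prove this proposition; it is quoted from \cite{TZ}. The argument there proceeds via a weighted Poincar\'e inequality: along the flow one has $R_{i\bar j}+u_{i\bar j}=g_{i\bar j}$, so the Bakry--\'Emery curvature of $(M,g(t),e^{-u}\omega_\varphi^n)$ is positive in the relevant sense, and a Bochner/Weitzenb\"ock computation for the weighted Laplacian gives a spectral gap for the weighted measure; Perelman's bound $|u|\le\mathcal B$ then transfers this to the unweighted inequality.

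Your route has a genuine gap at the final step. The log-Sobolev inequality coming from a lower bound on $\mu(g(t),\tau_0)$ is \emph{defective}: after absorbing the scalar-curvature term via $|R|\le\mathcal B$ it reads $\mathrm{Ent}_\mu(f^2)\le A\int|\nabla f|^2\,d\mu + C\int f^2\,d\mu$, with $C$ assembled from $\Lambda,\mathcal B,\tau_0,n,V$. Linearizing at $f=1+\varepsilon g$ with $\bar g=0$, the $\varepsilon^0$-term gives only $0\le C$, and the $\varepsilon^2$-term gives $(2-C)\int g^2\,d\mu\le A\int|\nabla g|^2\,d\mu$, which is vacuous once $C\ge 2$; nothing in your setup forces $C<2$. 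A defective log-Sobolev inequality is strictly weaker than Poincar\'e---$\mathbb R^n$ with Lebesgue measure has the former (from the Euclidean Sobolev inequality) but not the latter---so ``Rothaus's linearization extracts $\lambda_1\ge c>0$'' fails as stated. The missing ingredient is an a priori curvature-type positivity, and the one actually available along the flow is precisely $R_{i\bar j}+u_{i\bar j}=g_{i\bar j}$, which is what drives the \cite{TZ} proof.
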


  As an easy application of a normalization technique initiated
  in~\cite{CT1}, one can prove the following property.

   \begin{proposition}[c.f. \cite{PSS}, \cite{CW2}]
    By properly choosing initial condition, we have
 \begin{align*}
 \norm{\dot{\varphi}}{C^0} + \norm{\nabla \dot{\varphi}}{C^0}<C
 \end{align*}
 for some constant $C$ independent of time $t$.
 \label{proposition: dotphi}
 \end{proposition}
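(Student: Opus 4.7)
The plan is to reduce the pointwise bounds on $\dph$ to Perelman's pointwise bounds on the normalized Ricci potential $u$ (Proposition~\ref{proposition: perelman}), using the one-parameter freedom $\varphi\mapsto\varphi+Ae^t$ in the initial data to kill an exponentially growing mode in the spatial average of $\dph$.

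First, along the flow both $\dot{\omega}_\varphi=\st\ddb\dph$ and $\dot{\omega}_\varphi=\omega_\varphi-\mathrm{Ric}(\omega_\varphi)=\st\ddb u$ hold, so $\dph(t)-u(t)$ is spatially constant. Call it $c(t)$. Perelman's gradient estimate then gives $\snorm{\nabla\dph}{C^0}=\snorm{\nabla u}{C^0}\le\mathcal{B}$ for free, and the only task is to control $|c(t)|$.

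Next, differentiating (\ref{eq:flowpotential}) and using $\dot g_{i\bar j}=\dph_{i\bar j}$ yields $\ddot\varphi=\triangle_{\varphi}\dph+\dph$. Set $h(t)=\frac{1}{V}\int_M\dph\,\omega_\varphi^n$. Since $\frac{d}{dt}\omega_\varphi^n=\triangle_{\varphi}\dph\cdot\omega_\varphi^n$, integration by parts gives the linear ODE
\begin{equation*}
\frac{dh}{dt}=h-\frac{1}{V}\int_M|\nabla\dph|^2\omega_\varphi^n=:h-f(t),\qquad 0\le f(t)\le\mathcal{B}^2.
\end{equation*}
The key normalization is that (\ref{eq:flowpotential}) is preserved under $\varphi\mapsto\varphi+Ae^t$, which leaves both $\omega_\varphi$ and $f$ invariant while shifting $h\mapsto h+Ae^t$. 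Solving the ODE explicitly gives
\begin{equation*}
h(t)=e^t\Bigl[h(0)-\int_0^t e^{-s}f(s)\,ds\Bigr],
\end{equation*}
which is bounded if and only if $h(0)=\int_0^\infty e^{-s}f(s)\,ds$, an absolutely convergent integral. Choosing $A$ so that $h(0)$ equals this value makes $h(t)=\int_0^\infty e^{-s}f(t+s)\,ds$, hence $|h(t)|\le\mathcal{B}^2$.

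Finally, since $\dph-u=c(t)$ is spatially constant, integrating against $V^{-1}\omega_\varphi^n$ gives $c(t)=h(t)-\frac{1}{V}\int_M u\,\omega_\varphi^n$, and both summands are bounded (the second by $\snorm{u}{C^0}\le\mathcal{B}$). Thus $\snorm{\dph}{C^0}\le|c(t)|+\snorm{u}{C^0}$ is uniform in $t$, and together with the gradient bound above this proves the proposition. The main obstacle is the instability of the $h$-ODE: its homogeneous solution grows like $e^t$, so without pinning down the initial constant one cannot hope to bound $\dph$. This is precisely the role played by the ``properly chosen initial condition'' in the statement, and it is exactly the normalization technique initiated in~\cite{CT1}.
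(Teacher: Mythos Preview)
Your argument is correct and is precisely the standard proof the paper is pointing to: the paper does not give its own proof of this proposition but cites \cite{PSS}, \cite{CW2} and remarks that it is ``an easy application of a normalization technique initiated in~\cite{CT1}'', which is exactly the $\varphi\mapsto\varphi+Ae^t$ trick you use to kill the unstable mode of the averaged ODE $h'=h-f$. The reduction $\dph=u+c(t)$ via $\st\ddb\dph=\st\ddb u$, the gradient bound $\snorm{\nabla\dph}{}=\snorm{\nabla u}{}\le\mathcal{B}$, and the explicit choice $h(0)=\int_0^\infty e^{-s}f(s)\,ds$ (well-defined since $f$ depends only on $\omega_\varphi$, not on the additive constant) are all standard and correctly executed.
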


  Based on these estimates, the authors proved the following properties.

 \begin{proposition}[c.f. \cite{Ru}, \cite{CW2}]
   There is a constant $C$ such that
 \begin{align}
   \frac{1}{V} \int_M (-\varphi) \omega_{\varphi}^n \leq
  n \sup_M \varphi  - \sum_{i=0}^{n-1}
     \frac{i}{V} \int_M \st \partial \varphi \wedge
     \bar{\partial} \varphi \wedge \omega^i \wedge
     \omega_{\varphi}^{n-1-i} + C.
  \label{eqn: dsupphi}
 \end{align}
 In particular, we have
   \begin{align}
       \frac{1}{V} \int_M (-\varphi) \omega_{\varphi}^n  \leq n \sup_M
       \varphi + C.
   \label{eqn: supphi}
   \end{align}
 \label{proposition: supphi}
 \end{proposition}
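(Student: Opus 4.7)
The plan is to prove~(\ref{eqn: dsupphi}) by first establishing two uniform bounds along the flow, then reducing the inequality to an elementary algebraic rearrangement. The weaker estimate~(\ref{eqn: supphi}) will then follow by discarding a non-positive gradient sum.

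First I set notation. Put $A_k=\frac{1}{V}\int_M\varphi\,\omega^{n-k}\wedge\of^k$ and $T_k=\frac{1}{V}\int_M\st\partial\varphi\wedge\db\varphi\wedge\omega^{n-1-k}\wedge\of^k\geq 0$. The pointwise identity $\omega^{n-k}\wedge\of^k-\omega^{n-k-1}\wedge\of^{k+1}=-\omega^{n-k-1}\wedge\st\ddb\varphi\wedge\of^k$, combined with integration by parts, gives the recursion $A_k-A_{k+1}=T_k$. Iterating, $A_0-A_n=\sum_{k=0}^{n-1}T_k=:I(\varphi)$.

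Next, I derive two auxiliary bounds. To control $\sup_M\varphi$ from below, I rewrite the flow equation~(\ref{eq:flowpotential}) as $\of^n=e^{\dph-\varphi-u_\omega}\omega^n$. Since $\|\dph\|_{C^0}$ is bounded by Proposition~\ref{proposition: dotphi} and $u_\omega$ is a fixed smooth function, $\of^n\leq C e^{-\varphi}\omega^n$; integrating yields $V\leq C\int_M e^{-\varphi}\omega^n\leq CVe^{-\sup_M\varphi}$, so $\sup_M\varphi\geq -C'$. A symmetric Jensen argument applied to $\omega^n=e^{-\dph+\varphi+u_\omega}\of^n$ gives $A_0\geq -C''$. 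For the oscillation bound, I use the Green's function $G_{\of}$ of $\triangle_{\of}$ normalized by $\int G_{\of}\,\of^n=0$: for every $x\in M$, $\varphi(x)-A_n=\int_M G_{\of}(x,y)\triangle_{\of}\varphi(y)\,\of^n(y)$. Because $\triangle_{\of}\varphi=n-\mathrm{tr}_{\of}\omega\leq n$, evaluating at a maximum point of $\varphi$ and using the uniform lower bound $G_{\of}\geq -C$—which follows from Perelman's diameter estimate (Proposition~\ref{proposition: perelman}) and the uniform Sobolev constant (Proposition~\ref{proposition: sobolev}) by a standard heat-kernel argument—gives $\sup_M\varphi-A_n\leq C$. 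In particular $I(\varphi)=A_0-A_n\leq\sup_M\varphi-A_n\leq C$, so each $T_k$ is uniformly bounded.

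Finally, the main inequality is algebra. Reindexing $i=n-1-k$ converts the sum on the right of~(\ref{eqn: dsupphi}) into $\sum_{k=0}^{n-1}(n-1-k)T_k$. Using $-A_n=\sum_k T_k-A_0$, inequality~(\ref{eqn: dsupphi}) becomes equivalent to $\sum_{k=0}^{n-1}(n-k)T_k\leq n\sup_M\varphi+A_0+C$. The left side is bounded by $n\sum_k T_k=nI(\varphi)\leq nC$, and the right side is at least $n\sup_M\varphi+A_0+C\geq -nC_0-C_0'+C$ by the two auxiliary bounds; choosing $C$ large absorbs all uniform constants and yields~(\ref{eqn: dsupphi}). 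The simpler estimate~(\ref{eqn: supphi}) is immediate because the discarded term $-\sum_i i T'_i\leq 0$. The main technical obstacle is the uniform lower bound on $G_{\of}$ along the flow, which genuinely uses Perelman's deep estimates and the uniform Sobolev constant; every other step is direct integration by parts or elementary algebra.
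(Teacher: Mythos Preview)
Your argument has a genuine gap in the Green's function step, and the gap is fatal rather than cosmetic.

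You claim that the Green representation
\[
\varphi(x)-A_n=\int_M G_{\of}(x,y)\,\triangle_{\of}\varphi(y)\,\of^n(y),
\]
together with $\triangle_{\of}\varphi=n-\mathrm{tr}_{\of}\omega\le n$ and a uniform lower bound $G_{\of}\ge -C$, yields $\sup_M\varphi-A_n\le C$. This is a sign error. With the standard convention (Green's function of the positive operator $-\triangle_{\of}$, so that $G_{\of}\ge -C$ and $G_{\of}\to+\infty$ on the diagonal) the representation formula reads $\varphi(x)-A_n=\int G_{\of}(x,\cdot)\bigl(-\triangle_{\of}\varphi\bigr)\,\of^n$, and $-\triangle_{\of}\varphi=\mathrm{tr}_{\of}\omega-n$ is bounded \emph{below} by $-n$ but has no upper bound. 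Multiplying by the nonnegative kernel $G_{\of}+C$ therefore produces only a \emph{lower} bound on $\varphi(x)-A_n$; what you actually obtain is $A_n-\inf_M\varphi\le C$, not $\sup_M\varphi-A_n\le C$. (If instead you take the sign convention in which the formula has $\triangle_{\of}\varphi$ without the minus sign, then the Green's function is bounded \emph{above} rather than below, and the same obstruction appears.)

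This is not a minor slip, because the estimate $\sup_M\varphi-A_n\le C$ you are trying to use is essentially as strong as the main convergence theorems of the paper. Indeed it would give $I(\varphi)=A_0-A_n\le\sup_M\varphi-A_n\le C$ uniformly in $t$, and by Lemma~\ref{lemma: conditions} a uniform bound on $I_\omega(\varphi)$ is equivalent to a uniform $C^0$ bound on $\varphi$. Your subsequent ``algebraic'' step---bounding each $T_k$ by $I(\varphi)\le C$ and then absorbing everything into the additive constant---is valid logic, but it rests entirely on this unavailable premise; it would prove~\eqref{eqn: dsupphi} for the trivial reason that every term is already bounded, which is exactly what one cannot assume here.

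The ingredient that \emph{is} available without circularity is the fixed-metric estimate $\sup_M\varphi-A_0\le C_\omega$, obtained from the Green's function of $\omega$ and the bound $\triangle_\omega\varphi>-n$; this is where the proof in the cited references starts. (A smaller issue: your derivation of $\sup_M\varphi\ge -C'$ also has a reversed inequality---you need $V\ge c\int_M e^{-\varphi}\omega^n\ge cVe^{-\sup_M\varphi}$, not the chain you wrote---though the conclusion itself is correct.)
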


 \begin{proposition}[c.f. \cite{Ru}, \cite{CW2}]
  For every $\delta$ less than the $\alpha$-invariant of $M$,  there
  is a uniform constant $C$ such that
   \begin{align}
     \sup_M \varphi < \frac{1-\delta}{\delta}\int_M (-\varphi) \omega_{\varphi}^n  + C
      \label{eqn: deltainv}
   \end{align}
  along the flow.
  \label{proposition: nesup}
  \end{proposition}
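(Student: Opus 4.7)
The plan is to derive the estimate by combining the defining property of the $\alpha$-invariant (applied to the normalized potential $\varphi - \sup_M \varphi$) with a change of reference measure coming from the flow equation itself, and then close the loop by a single application of Jensen's inequality. Throughout, $S := \sup_M \varphi$ and $\tilde\varphi := \varphi - S \in \mathscr{P}(M,\omega)$ with $\tilde\varphi \leq 0$.

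First, since $\delta < \alpha(M, \omega)$, the definition of the $\alpha$-invariant recalled in Section 2.1 yields a constant $C_0 = C_0(\delta)$, independent of $t$, such that
\begin{align*}
   \int_M e^{-\delta \tilde\varphi}\,\omega^n
   \;=\; e^{\delta S}\int_M e^{-\delta \varphi}\,\omega^n
   \;\leq\; C_0 V.
\end{align*}
I would then rewrite $\omega^n$ in terms of $\omega_\varphi^n$ using the K\"ahler Ricci flow equation (\ref{eq:flowpotential}): that equation reads $\log (\omega_\varphi^n/\omega^n)=\dot\varphi-\varphi-u_\omega$, so
\begin{align*}
   \omega^n \;=\; e^{-\dot\varphi+\varphi+u_\omega}\,\omega_\varphi^n.
\end{align*}
Inserting this in the previous line gives
\begin{align*}
   e^{\delta S}\int_M e^{(1-\delta)\varphi - \dot\varphi + u_\omega}\,\omega_\varphi^n \;\leq\; C_0 V.
\end{align*}

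Here the key inputs from earlier in the paper are that $\|u_\omega\|_{C^0}$ is a time-independent constant (it is a quantity attached to the fixed reference metric), and that $\|\dot\varphi\|_{C^0} \leq C$ uniformly in $t$ by Proposition~\ref{proposition: dotphi}. Absorbing these two bounded exponents into a single constant $C_1$ produces
\begin{align*}
   e^{\delta S}\int_M e^{(1-\delta)\varphi}\,\omega_\varphi^n \;\leq\; C_1 V.
\end{align*}

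Finally I would apply Jensen's inequality to the convex function $e^{(1-\delta)\cdot}$ against the probability measure $\omega_\varphi^n/V$, obtaining
\begin{align*}
   \frac{1}{V}\int_M e^{(1-\delta)\varphi}\,\omega_\varphi^n
   \;\geq\; \exp\Bigl(\frac{1-\delta}{V}\int_M \varphi\,\omega_\varphi^n\Bigr).
\end{align*}
Combining the last two displays and taking logarithms yields
\begin{align*}
   \delta S + \frac{1-\delta}{V}\int_M \varphi\,\omega_\varphi^n \;\leq\; \log C_1,
\end{align*}
which rearranges to the desired estimate $\sup_M \varphi \leq \frac{1-\delta}{\delta}\cdot \frac{1}{V}\int_M(-\varphi)\,\omega_\varphi^n + C$ after dividing by $\delta$.

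There is no genuine obstacle here, only a bookkeeping point to watch: the inequality must hold with a time-independent constant, so one needs to be careful that every exponent absorbed into $C_1$ is controlled uniformly along the flow. This is exactly why Proposition~\ref{proposition: dotphi} (uniform $C^0$-bound on $\dot\varphi$) is invoked, and why $u_\omega$ (which depends only on the fixed reference metric $\omega$) poses no difficulty. The whole argument is essentially an exchange between $\omega^n$ and $\omega_\varphi^n$ driven by the flow equation, sandwiching $S$ between an $\alpha$-invariant upper bound and a Jensen lower bound.
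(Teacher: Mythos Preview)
Your proof is correct and follows precisely the approach the paper itself employs when deriving the analogous inequality (\ref{eqn: cofib}) inside the proof of Theorem~\ref{theorem: nuconv} and Claim~\ref{claim: a2e1} in Theorem~\ref{theorem: nuconvr}: start from the $\alpha$-invariant bound on $\int_M e^{-\delta(\varphi-\sup_M\varphi)}\omega^n$, convert $\omega^n$ to $\omega_\varphi^n$ via the flow equation, absorb the uniformly bounded $\dot\varphi$ and $u_\omega$, and finish with Jensen's inequality. The paper does not spell out a separate proof of this proposition (it is cited from \cite{Ru}, \cite{CW2}), but your argument is exactly the standard one and matches the paper's internal derivations; note also that your final inequality carries the normalization $\frac{1}{V}$, consistent with (\ref{eqn: cofib}) and (\ref{eqn: supphi}), which the displayed statement (\ref{eqn: deltainv}) omits---almost certainly a typographical slip in the paper.
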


 \begin{lemma}[c.f. \cite{Ru}, \cite{CW2}]
  Along \KRf $\{(M^n, g(t)), 0 \leq t < \infty \}$ in the canonical class of
  Fano manifold $M$, the following conditions are equivalent.
 \begin{itemize}
 \item $\varphi$ is uniformly bounded.
 \item $\displaystyle \sup_M \varphi$ is uniformly bounded from above.
 \item $\displaystyle \inf_M \varphi$  is uniformly bounded from below.
 \item $\int_M \varphi \omega^n$ is uniformly bounded from above.
 \item $\int_M (-\varphi) \omega_{\varphi}^n$ is
   uniformly bounded from above.
 \item $I_{\omega}(\varphi)$ is uniformly bounded.
 \item $Osc_{M} \varphi$ is uniformly bounded.
 \end{itemize}
 \label{lemma: conditions}
 \end{lemma}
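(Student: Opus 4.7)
The plan is to prove a cycle of implications $(1)\Leftrightarrow(2)\Leftrightarrow\cdots\Leftrightarrow(7)$, built on top of a short list of unconditional baseline inequalities that hold along the normalized K\"ahler Ricci flow with no hypothesis on $\varphi$. Evaluating the flow equation $\log(\omega_\varphi^n/\omega^n)=\dot\varphi-\varphi-u_\omega$ at the maximum and minimum points of $\varphi$, and using $\|\dot\varphi\|_{C^0}+\|u_\omega\|_{C^0}\leq\mathcal{B}$ from Propositions~\ref{proposition: perelman} and~\ref{proposition: dotphi}, yields $\sup_M\varphi(t)\geq -C_0$ and $\inf_M\varphi(t)\leq C_0$. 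The same equation integrated against $\omega^n$ and $\omega_\varphi^n$ gives $c_1\leq\int_M e^{-\varphi}\omega^n\leq c_2$ and $c_1\leq\int_M e^{\varphi}\omega_\varphi^n\leq c_2$, so Jensen's inequality applied to the probability measures $\omega^n/V$ and $\omega_\varphi^n/V$ furnishes $V^{-1}\int_M\varphi\,\omega^n\geq -C$ and $V^{-1}\int_M\varphi\,\omega_\varphi^n\leq C$. Finally, $\omega_\varphi>0$ forces $\triangle_\omega\varphi>-n$, whence the Green's function of $(M,\omega)$ gives $\sup_M\varphi\leq V^{-1}\int_M\varphi\,\omega^n+C$, and $I_\omega(\varphi)\geq 0$ by integration by parts.

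With these ingredients the easy implications fall out formally. $(1)\Leftrightarrow(7)$ is immediate from the baseline on $\sup$ and $\inf$. $(2)\Leftrightarrow(5)$ is exactly Propositions~\ref{proposition: supphi} and~\ref{proposition: nesup}. Combining the Green's function inequality with the baseline lower bound on $V^{-1}\int\varphi\,\omega^n$ gives $(2)\Leftrightarrow(4)$. Using $VI_\omega(\varphi)=\int\varphi\,\omega^n-\int\varphi\,\omega_\varphi^n$ together with $I_\omega\geq 0$ and the baseline one-sided bounds $\int\varphi\,\omega^n\geq -C$ and $\int\varphi\,\omega_\varphi^n\leq C$, boundedness of $I_\omega$ is equivalent to simultaneous boundedness of both integrals, hence to $(4)$ and $(5)$; this yields $(2)\Leftrightarrow(6)$. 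So the integral/averaged conditions $(2),(4),(5),(6)$ are mutually equivalent, and separately $(1)\Leftrightarrow(7)$.

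To close the cycle the essential remaining step is $(2)\Rightarrow(3)$. I would set $\tilde\varphi:=\varphi-\sup_M\varphi\leq 0$, so $\omega_{\tilde\varphi}=\omega_\varphi>0$ and
\[
\omega_{\tilde\varphi}^n = e^{-\tilde\varphi}\cdot e^{\dot\varphi-\sup_M\varphi-u_\omega}\,\omega^n.
\]
Under hypothesis $(2)$ together with the baseline $\sup_M\varphi\geq -C_0$, the second factor is a uniformly bounded positive function. The uniform $\alpha$-invariant $\alpha_0=\alpha(M,\omega)>0$ (Section~2.1) then supplies $\int_M e^{-\alpha_0\tilde\varphi}\omega^n\leq C$, placing the right-hand side of the Monge--Amp\`ere equation in a fixed $L^p$ class on $(M,\omega)$. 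A Kolodziej-type $L^\infty$ estimate---Moser iteration powered by the uniform Sobolev constant of Proposition~\ref{proposition: sobolev}---then gives $\|\tilde\varphi\|_{C^0}\leq C$, i.e.\ $\text{Osc}_M\varphi\leq C$; combined with the baseline this recovers $\inf_M\varphi\geq -C'$, which is $(3)$.

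The main obstacle is precisely this last step. All the other implications reduce to Jensen/Green's-function bookkeeping on top of Propositions~\ref{proposition: supphi}--\ref{proposition: nesup}, but $(2)\Rightarrow(3)$ genuinely requires an $L^\infty$ estimate for a Monge--Amp\`ere equation whose right-hand side contains the unknown. The upper bound on $\sup_M\varphi$ is exactly what converts the problem into one with a prescribed, $L^p$-bounded right-hand side on which the uniform $\alpha$-invariant of $(M,\omega)$ and the uniform Sobolev constant along the flow can be combined via Moser iteration; without such an upper bound the equation is self-referential, and producing a uniform $C^0$ bound in that generality is essentially equivalent to the existence of a K\"ahler--Einstein metric on $M$.
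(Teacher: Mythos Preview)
The paper does not actually prove this lemma; it is stated with a citation to \cite{Ru} and \cite{CW2}. So your proposal must be judged on its own merits.

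Your baseline inequalities and the equivalences $(1)\Leftrightarrow(7)$ and $(2)\Leftrightarrow(4)\Leftrightarrow(5)\Leftrightarrow(6)$ are all correct and well organized. The trivial closing implication $(3)\Rightarrow(5)$ (since $\varphi\geq -C$ gives $\int_M(-\varphi)\omega_\varphi^n\leq CV$) is not written but is obvious.

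There is, however, a genuine gap in your key step $(2)\Rightarrow(3)$. You claim that the $\alpha$-invariant bound $\int_M e^{-\alpha_0\tilde\varphi}\omega^n\leq C$ ``places the right-hand side of the Monge--Amp\`ere equation in a fixed $L^p$ class'' and then invoke a Kolodziej-type estimate. But the right-hand side is (up to a bounded factor) $e^{-\tilde\varphi}$, so what you actually obtain is $e^{-\tilde\varphi}\in L^{\alpha_0}(\omega^n)$. Kolodziej's theorem requires $p>1$, while for a general Fano manifold $\alpha(M,\omega)\leq 1$; for instance $\alpha(\CP^n,\omega_{FS})=\tfrac{1}{n+1}$. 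So the hypothesis of Kolodziej is simply not available, and this step does not go through as written.

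The clean fix avoids the $\alpha$-invariant entirely and uses exactly the uniform Sobolev constant of Proposition~\ref{proposition: sobolev} that you already mention, but with respect to the \emph{evolving} metric $\omega_\varphi$ rather than the fixed $\omega$. Since $\omega=\omega_\varphi+\sqrt{-1}\partial\bar\partial(-\varphi)>0$, one has $\triangle_{\omega_\varphi}(-\varphi)>-n$. Under $(2)$ (together with your baseline $\sup_M\varphi\geq -C_0$) the function $u:=-\varphi+\sup_M\varphi+1\geq 1$ satisfies $\triangle_{\omega_\varphi}u\geq -n\geq -nu$. Standard Moser iteration against $\omega_\varphi$, using the uniform Sobolev constant of Proposition~\ref{proposition: sobolev}, then gives $\sup_M u\leq C\,\|u\|_{L^1(\omega_\varphi^n)}$. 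By Proposition~\ref{proposition: supphi}, condition $(2)$ already forces $(5)$, i.e.\ $\int_M(-\varphi)\omega_\varphi^n\leq C$, so $\|u\|_{L^1(\omega_\varphi^n)}$ is bounded and hence $\inf_M\varphi\geq -C'$. This is the argument underlying the cited references, and it shows that the $\alpha$-invariant and Kolodziej's theorem are red herrings for this particular implication.
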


 As a simple corollary, we have
  \begin{theorem}[\cite{CW2}]
    If $\alpha_G(M, \omega)> \frac{n}{n+1}$ for some $G$-invariant metric $\omega$,
  then $\varphi$ is uniformly  bounded along
    the \KRf initiating from $\omega$.
    \label{theorem: alphag}
 \end{theorem}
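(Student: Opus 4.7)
The plan is to derive this as a direct corollary of the preceding propositions, by combining a $G$-invariant analogue of Proposition~\ref{proposition: nesup} with Proposition~\ref{proposition: supphi} and Lemma~\ref{lemma: conditions}. First I would observe that since the initial metric $\omega$ is $G$-invariant and the flow equation (\ref{eq:flowpotential}) is $\Aut(M)$-equivariant, the evolving potential $\varphi(t)$ remains $G$-invariant, so $\varphi(t) \in \mathscr{P}_G(M, \omega)$ for every $t \geq 0$. Consequently the proof of Proposition~\ref{proposition: nesup} applies verbatim to this $G$-invariant family, yielding for any $\delta < \alpha_G(M, \omega)$ a constant $C_1$ independent of $t$ such that
\begin{align*}
\sup_M \varphi(t) \leq \frac{1-\delta}{\delta}\cdot\frac{1}{V}\int_M(-\varphi(t))\,\omega_{\varphi(t)}^n + C_1.
\end{align*}

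The next step is to chain this with Proposition~\ref{proposition: supphi}, which supplies a uniform constant $C_2$ with
\begin{align*}
\frac{1}{V}\int_M(-\varphi(t))\,\omega_{\varphi(t)}^n \leq n \sup_M \varphi(t) + C_2.
\end{align*}
Substituting produces
\begin{align*}
\sup_M \varphi(t) \leq \frac{(1-\delta)n}{\delta}\sup_M \varphi(t) + C_3,
\end{align*}
and since $\alpha_G(M, \omega) > \frac{n}{n+1}$ I may choose $\delta \in \bigl(\tfrac{n}{n+1},\alpha_G(M, \omega)\bigr)$, for which a quick calculation shows $\frac{(1-\delta)n}{\delta} < 1$. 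Rearranging then gives a uniform upper bound for $\sup_M \varphi(t)$ along the flow.

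Finally, Lemma~\ref{lemma: conditions} upgrades the uniform upper bound on $\sup_M \varphi(t)$ to a uniform $C^0$ bound on $\varphi(t)$ itself, which is the desired conclusion. The main point requiring care is verifying that the argument of Proposition~\ref{proposition: nesup} passes cleanly to the $G$-invariant category with $\alpha_G$ replacing $\alpha$; this is essentially automatic because the $\alpha$-invariant enters that proof only through the integrability of $e^{-\delta(\varphi-\sup_M\varphi)}$, a property that by definition of $\alpha_G$ is preserved throughout $\mathscr{P}_G(M,\omega)$ for any $\delta < \alpha_G(M, \omega)$. No new estimates beyond those already established in Section~2.2 are needed.
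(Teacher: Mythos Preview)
Your proposal is correct and is precisely the ``simple corollary'' argument the paper intends: combine the $G$-invariant version of Proposition~\ref{proposition: nesup} with inequality~(\ref{eqn: supphi}) of Proposition~\ref{proposition: supphi}, choose $\delta\in(\tfrac{n}{n+1},\alpha_G)$ so that $\tfrac{(1-\delta)n}{\delta}<1$, and then invoke Lemma~\ref{lemma: conditions}. The paper gives no further details beyond labeling the theorem a corollary, so your write-up matches both the content and the level of the original.
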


 \subsection{Estimates along Tamed \KRF}

 In this section, we only study tamed flow.

 \begin{definition}
 For every positive integer $\nu$, we can define a function $F_{\nu}$ on
 spacetime $M \times [0, \infty)$  as follows.
 \begin{align*}
    F_{\nu}(x, t) \triangleq \frac{1}{\nu} \log \sum_{\beta=0}^{N_{\nu}} \snorm{S_{\nu,
    \beta}^t}{h_t^{\nu}}^2(x)
 \end{align*}
 where $\{S_{\nu, \beta}^t\}_{\beta=0}^{N_{\nu}}$ is an orthonormal basis
 of $H^0(K_M^{-\nu})$ under the metric $g_t=g(t)$ and $h_t^{\nu}=(\det g_t)^{\nu}$, i.e.,
  \begin{align*}
  \displaystyle \int_M \langle S_{\nu, \alpha}^t, S_{\nu, \beta}^t \rangle_{h_t^{\nu}} \omega_t^n
  = \delta_{\alpha \beta}, \quad
  N_{\nu}= \dim H^0(K_M^{-\nu})-1.
  \end{align*}
  Note that this definition is independent of the choice of
  orthonormal basis of $H^0(K_M^{-\nu})$.
 \end{definition}

 \begin{definition}
 $\{(M^n, g(t)), 0 \leq t < \infty \}$ is called a tamed flow if there is a
 big integer $\nu$ such that the following properties hold.
 \begin{itemize}
    \item   $K_M^{-\nu}$ is very ample.
    \item   $\displaystyle   \snorm{F_{\nu}}{C^0(M \times [0, \infty))} <
    \infty$.
 \end{itemize}
 \label{definition: nc}
 \end{definition}

  Suppose $\{(M^n, g_t), 0 \leq t < \infty\}$ is a  tamed flow.
  Under the metric $g_t$ and $h_t^{\nu}$, we choose $\{ S_{ \nu, \beta}^{t}\}_{\beta=0}^N$ as an othonormal
  basis of $H^0(K_M^{-\nu})$.  At the same time, let $\{\tilde{S}_{\nu, \beta}^{t} \}_{\beta=0}^N$
  be an orthonormal basis of $H^0(K_M^{-\nu})$ under the metric $g_0$ and $h_0^{\nu}$.
  Then we have two embeddings.
  \begin{align*}
     \Phi^t: M \mapsto \CP^N, & \quad  x \mapsto [S_{\nu,0}^{t}(x):
     \cdots:  S_{\nu, N}^{t}(x)]; \\
     \Psi^t: M \mapsto \CP^N, & \quad  x \mapsto [\tilde{S}_{\nu,0}^t(x):
     \cdots:
     \tilde{S}_{\nu, N}^t(x)].
  \end{align*}
  By rotating basis if necessary, we can assume $\Phi^t=\sigma(t) \circ
  \Psi^t$ where
  \begin{align*}
   \sigma(t)= a(t) diag \{ \lambda_0(t), \cdots, \lambda_N(t)\} ,
   \quad 0<a(t),
   \quad  0 < \lambda_0(t) < \lambda_1(t) < \cdots < \lambda_N(t) =1.
  \end{align*}
  This indicates that \KRf equation can be rewritten as
  \begin{align*}
     \dot{\varphi} &= \log \frac{\omega_{\varphi}^n}{ \omega^n} +
     \varphi +u_{\omega}\\
      &= \frac{1}{\nu} \log \frac{\sum_{\beta=0}^{N} \snorm{S_{\nu, \beta}^t}{h_t^{\nu}}^2}{\sum_{\beta=0}^N \snorm{S_{\nu,
      \beta}^t}{h_0^{\nu}}^2} + \varphi +u_{\omega}\\
      &=\frac{1}{\nu}\log \sum_{\beta=0}^{N} \snorm{S_{\nu, \beta}^t}{h_t^{\nu}}^2 -
       \frac{1}{\nu}\log \sum_{\beta=0}^{N} \snorm{a(t) \lambda_{\beta}(t) \tilde{S}_{\nu,
      \beta}^t}{h_0^{\nu}}^2 + \varphi +u_{\omega}\\
      &=F_{\nu}(x, t)
      - \frac{1}{\nu} \log \sum_{\beta=0}^{N} \snorm{\lambda_{\beta}(t) \tilde{S}_{\nu,
      \beta}^t}{h_0^{\nu}}^2 + \varphi +u_{\omega} - \frac{2}{\nu} \log a(t). \\
  \end{align*}
 In other words,
 \begin{align*}
     \varphi - \frac{2}{\nu} \log a(t) = \dot{\varphi} -u_{\omega}
      - F_{\nu}(x,t)
      + \frac{1}{\nu} \log \sum_{\beta=0}^{N} \snorm{\lambda_{\beta}(t) \tilde{S}_{\nu, \beta}^t}{h_0^{\nu}}^2.
 \end{align*}
 Since $\displaystyle F_{\nu}(x,t)$,
 $\dot{\varphi}$ and $u_{\omega}$ are all uniformly bounded, we obtain
 \begin{align*}
    \varphi - \frac{2}{\nu} \log a(t) \sim  \frac{1}{\nu} \log \sum_{\beta=0}^{N} \snorm{\lambda_{\beta}(t) \tilde{S}_{\nu,
      \beta}^t}{h_0^{\nu}}^2.
 \end{align*}
 Here we use the notation $\sim$ to denote that the difference of two
 sides are controlled by a constant.    It follows that
 \begin{align*}
     \varphi - \sup_M \varphi  \sim \frac{1}{\nu} \log \sum_{\beta=0}^{N} \snorm{\lambda_{\beta}(t) \tilde{S}_{\nu,
      \beta}^t}{h_0^{\nu}}^2 - \frac{1}{\nu} \sup_M \log \sum_{\beta=0}^{N} \snorm{\lambda_{\beta}(t) \tilde{S}_{\nu,
      \beta}^t}{h_0^{\nu}}^2.
 \end{align*}
 It is obvious that
 \begin{align*}
    \sup_M \log \sum_{\beta=0}^{N} \snorm{\lambda_{\beta}(t) \tilde{S}_{\nu,
      \beta}^t}{h_0^{\nu}}^2 \leq \sup_M \log \sum_{\beta=0}^{N} \snorm{ \tilde{S}_{\nu,
      \beta}^t}{h_0^{\nu}}^2 < C.
 \end{align*}
 On the other hand, we have
 \begin{align*}
   \sup_M \log \sum_{\beta=0}^{N} \snorm{\lambda_{\beta}(t) \tilde{S}_{\nu,
      \beta}^t}{h_0^{\nu}}^2 & \geq \sup_M \log \snorm{\lambda_{N}(t) \tilde{S}_{\nu,
      \beta}^t}{h_0^{\nu}}^2 \\
      &= \sup_M \log  \snorm{\tilde{S}_{\nu, N}^t}{h_0^{\nu}}^2\\
      &=\log \sup_M \snorm{\tilde{S}_{\nu, N}^t}{h_0^{\nu}}^2\\
      &\geq \log \frac{1}{V} \int_M \snorm{\tilde{S}_{\nu, N}^t}{h_0^{\nu}}^2
      \omega^n\\
      &=-\log V.
 \end{align*}
 Therefore, $\displaystyle  \frac{1}{\nu} \sup_M \log \sum_{\beta=0}^{N} \snorm{\lambda_{\beta}(t) \tilde{S}_{\nu,
 \beta}^t}{h_0^{\nu}}^2$ is uniformly bounded and it yields that
 \begin{align}
    \varphi - \sup_M \varphi \sim  \frac{1}{\nu} \log \sum_{\beta=0}^{N} \snorm{\lambda_{\beta}(t) \tilde{S}_{\nu,
    \beta}^t}{h_0^{\nu}}^2.
 \label{eqn: pstrongprev}
 \end{align}
 So we have proved the following property.
 \begin{proposition}
 If $\{(M^n, g(t)), 0 \leq t < \infty \}$ is a \KRf tamed by $\nu$,
 then there is a constant $C$ (depending on this flow and $\nu$) such that
 \begin{align}
     |\varphi - \sup_M \varphi
     -\frac{1}{\nu}\log \sum_{\beta=0}^{N} \snorm{\lambda_{\beta}(t) \tilde{S}_{\nu, \beta}^t}{h_0^{\nu}}^2 | < C
 \label{eqn: strongC0e}
 \end{align}
 uniformly along this flow.
 \label{proposition: pstrong}
 \end{proposition}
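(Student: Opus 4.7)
The plan is to rewrite the K\"ahler Ricci flow equation in terms of the Bergman-type functions built from two different orthonormal bases of $H^0(K_M^{-\nu})$, and then to combine the tamed hypothesis with Perelman's estimates to promote the resulting identity into a two-sided comparison between $\varphi - \sup_M \varphi$ and the log-sum of squared sections.

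First, I would begin from the potential form $\dot\varphi = \log(\omega_\varphi^n / \omega^n) + \varphi + u_\omega$ and invoke the tautological identity
$$\frac{1}{\nu}\log\frac{\omega_\varphi^n}{\omega^n} = \frac{1}{\nu}\log\frac{\sum_\beta \snorm{S}{h_t^\nu}^2}{\sum_\beta \snorm{S}{h_0^\nu}^2},$$
which holds for any basis $\{S\}$ of $H^0(K_M^{-\nu})$ inserted on both sides, since the ratio is invariant under change of basis. I would then use the \emph{freedom to pick bases differently in numerator and denominator}: taking the numerator basis to be $g_t$-orthonormal makes the numerator exactly $\nu F_\nu(x,t)$, while in the denominator I express the same basis in terms of the fixed $g_0$-orthonormal basis $\{\tilde S_{\nu,\beta}^t\}$ through the singular value decomposition of the transition, $\Phi^t = a(t)\,\mathrm{diag}(\lambda_0(t),\dots,\lambda_N(t))\circ\Psi^t$ with $0 < \lambda_0 \leq \dots \leq \lambda_N = 1$. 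Substituting produces the identity
$$\varphi - \tfrac{2}{\nu}\log a(t) = \dot\varphi - u_\omega - F_\nu(x,t) + \tfrac{1}{\nu}\log\sum_{\beta=0}^N \snorm{\lambda_\beta(t)\tilde S_{\nu,\beta}^t}{h_0^\nu}^2.$$

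Second, the tamed hypothesis $\|F_\nu\|_{C^0} \leq C$, Perelman's estimate $\|u_\omega\|_{C^0} \leq C$ from Proposition~\ref{proposition: perelman}, and the estimate $\|\dot\varphi\|_{C^0} \leq C$ from Proposition~\ref{proposition: dotphi} collapse three of the four right-hand terms into a uniformly bounded error, yielding
$$\varphi - \tfrac{2}{\nu}\log a(t) \;\sim\; \tfrac{1}{\nu}\log\sum_{\beta=0}^N \snorm{\lambda_\beta(t)\tilde S_{\nu,\beta}^t}{h_0^\nu}^2.$$
To replace the unknown normalizing factor $\tfrac{2}{\nu}\log a(t)$ by $\sup_M \varphi$, it suffices to check that the supremum over $M$ of the right-hand log-sum is uniformly bounded both above and below. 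The upper bound is immediate from $\lambda_\beta(t) \leq 1$ together with the uniform $C^0$-control of $\sum_\beta \snorm{\tilde S_{\nu,\beta}^t}{h_0^\nu}^2$ on the fixed Hermitian bundle $(K_M^{-\nu}, h_0^\nu)$, which follows from standard $L^2$-to-$C^0$ elliptic bounds applied to holomorphic sections with unit $L^2$-norm under the fixed metric $g_0$. The lower bound uses only $\lambda_N(t) = 1$ and the $L^2$-unit normalization of $\tilde S_{\nu,N}^t$: dropping all but the last summand gives $\sup_M \snorm{\tilde S_{\nu,N}^t}{h_0^\nu}^2 \geq V^{-1}$. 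Subtracting the supped version of the near-identity from its pointwise version then yields the claimed inequality.

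The argument is essentially algebraic once the tamed hypothesis is invoked; there is no substantive analytic obstacle. The only delicate point is bookkeeping around the diagonalization $\sigma(t) = a(t)\,\mathrm{diag}(\lambda_\beta(t))$, namely ensuring that a common rotation of $\{\tilde S_{\nu,\beta}^t\}$ and of $\{S_{\nu,\beta}^t\}$ can be chosen so that the transition matrix is diagonal with positive entries and $\lambda_N(t)=1$ --- this is just the singular value decomposition of the $g_t$-Gram matrix of the fixed basis, and it preserves orthonormality of both bases. The constants produced depend on $\nu$ and on the flow but not on $t$, which is exactly what the proposition requires.
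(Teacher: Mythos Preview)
Your proposal is correct and follows essentially the same route as the paper: rewrite the flow equation using the Bergman-type ratio, diagonalize the transition between the $g_t$- and $g_0$-orthonormal bases to extract $a(t)$ and the $\lambda_\beta(t)$, invoke the tamed bound on $F_\nu$ together with the uniform bounds on $\dot\varphi$ and $u_\omega$, and then eliminate $\tfrac{2}{\nu}\log a(t)$ by showing that $\sup_M \tfrac{1}{\nu}\log\sum_\beta\snorm{\lambda_\beta\tilde S_{\nu,\beta}^t}{h_0^\nu}^2$ is two-sidedly bounded via $\lambda_N=1$ and the $L^2$-normalization. One small remark: the bound on $u_\omega$ does not require Perelman, since $u_\omega$ is the Ricci potential of the \emph{fixed} initial metric and is therefore a fixed smooth function on $M$.
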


 Inequality (\ref{eqn: strongC0e}) is called the strong partial $C^0$-estimate by Tian.
 Using this estimate, the control of $\norm{\varphi}{C^0(M)}$ along \KRf
 is reduced to the control of values of local $\alpha$-invariants (See Definition~\ref{definitionin: lalpha}
 and Definition~\ref{definitionin: nualpha})
  of holomorphic sections $S \in H^0(K_M^{-\nu})$.\\

 \begin{theorem}
  Suppose $\{(M^n, g(t)), 0 \leq t < \infty \}$ is a \KRf tamed by $\nu$.
  If $\alpha_{\nu, 1}> \frac{n}{n+1}$,
  then $\varphi$ is uniformly bounded along this flow. In
  particular, this flow converges to a KE metric exponentially fast.
 \label{theorem: nuconv}
 \end{theorem}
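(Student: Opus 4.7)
The plan is to use the tamed hypothesis together with the strong partial $C^0$-estimate of Proposition~\ref{proposition: pstrong} to convert $\alpha_{\nu,1}>\frac{n}{n+1}$ into a uniform $\alpha$-invariant type integral bound along the flow, and then to run the scheme that proves Theorem~\ref{theorem: alphag}. Concretely, Proposition~\ref{proposition: pstrong} gives
\begin{align*}
  \varphi(t)-\sup_M\varphi(t) \geq \frac{1}{\nu}\log\sum_{\beta=0}^{N}\snorm{\lambda_\beta(t)\tilde S_{\nu,\beta}^t}{h_0^\nu}^{2}-C,
\end{align*}
and since $\lambda_N(t)=1$, dropping all summands except the top one yields the pointwise lower bound $\varphi(t)-\sup_M\varphi(t)\geq \frac{1}{\nu}\log\snorm{\tilde S_{\nu,N}^t}{h_0^\nu}^{2}-C$.

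Now $\tilde S_{\nu,N}^t$ has unit $L^2$-norm under $(g_0,h_0^\nu)$, so $\frac{1}{\nu}\log\snorm{\tilde S_{\nu,N}^t}{h_0^\nu}^2$ lies in $\mathscr{P}_{\nu,1}(M,\omega)$ for every $t$. Choose $\alpha$ satisfying $\frac{n}{n+1}<\alpha<\alpha_{\nu,1}$; this interval is nonempty by hypothesis. Exponentiating the preceding inequality and integrating yields
\begin{align*}
  \int_M e^{-\alpha(\varphi(t)-\sup_M\varphi(t))}\,\omega^n \leq e^{C\alpha}\int_M\snorm{\tilde S_{\nu,N}^t}{h_0^\nu}^{-2\alpha/\nu}\,\omega^n \leq C'
\end{align*}
uniformly in $t$, where the last step is the definition of $\alpha_{\nu,1}$ applied to the single-section family $\{\tilde S_{\nu,N}^t\}_{t\geq 0}\subset\mathscr{P}_{\nu,1}$.

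This uniform integral bound plays the role of an $\alpha$-invariant bound restricted to the actual flow potentials. Mimicking the proof of Theorem~\ref{theorem: alphag}, Jensen's inequality together with the standard $I$--$J$ functional identities produces the analogue of Proposition~\ref{proposition: nesup} with $\alpha$ in place of $\delta$, namely
\begin{align*}
  \sup_M\varphi \leq \frac{1-\alpha}{\alpha}\cdot\frac{1}{V}\int_M(-\varphi)\,\omega_\varphi^n+C_1.
\end{align*}
Combining this with Proposition~\ref{proposition: supphi} gives $\sup_M\varphi\leq\frac{n(1-\alpha)}{\alpha}\sup_M\varphi+C_2$; since $\alpha>\frac{n}{n+1}$ the multiplier is strictly less than $1$, so $\sup_M\varphi$ is uniformly bounded above. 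Lemma~\ref{lemma: conditions} then promotes this to a uniform $C^0$-bound on $\varphi$, and exponential convergence of the flow to a KE metric follows by standard arguments combining this bound with Perelman's uniform estimates on scalar curvature, diameter, and the Ricci potential.

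The delicate point is the Jensen step: Proposition~\ref{proposition: nesup} is stated under the global hypothesis $\delta<\alpha(M,\omega)$, whereas we only possess the exponential integral bound along the specific sequence of flow potentials. Its proof, however, invokes the exponential integral only at the particular potential being estimated, so the adaptation to a flow-restricted bound is essentially automatic. The genuinely new content of the theorem is thus the observation that the single section $\tilde S_{\nu,N}^t$ corresponding to the largest eigenvalue $\lambda_N(t)=1$ already carries enough $\alpha$-invariant information to dominate $\sup_M\varphi-\varphi$ through Proposition~\ref{proposition: pstrong}; after that, the Tian-style iteration packaged in the proof of Theorem~\ref{theorem: alphag} closes the argument.
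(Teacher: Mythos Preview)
Your proof is correct and follows essentially the same route as the paper: isolate the top section $\tilde S_{\nu,N}^t$ (with $\lambda_N(t)=1$) from the strong partial $C^0$-estimate, bound $\int_M e^{-\alpha(\varphi-\sup_M\varphi)}\,\omega^n$ uniformly for some $\alpha\in(\tfrac{n}{n+1},\alpha_{\nu,1})$, then run the Jensen step to obtain inequality~(\ref{eqn: cofib}) and close with Proposition~\ref{proposition: supphi} and Lemma~\ref{lemma: conditions}. The paper argues by contradiction and invokes the semicontinuity of complex singularity exponents~\cite{DK} to control $\int_M|\tilde S_{\nu,N}^{t_i}|^{-2\alpha/\nu}\omega^n$ along a convergent subsequence, whereas your direct appeal to Definition~\ref{definitionin: nualpha} (which already builds in a \emph{uniform} bound over all of $\mathscr{P}_{\nu,1}$) is a legitimate shortcut that sidesteps both the contradiction framework and the semicontinuity theorem.
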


 \begin{proof}
   Suppose not. Then there is a sequence of times $t_i$ such that
   $\displaystyle \lim_{i \to \infty} \snorm{\varphi_{t_i}}{C^0(M)}= \infty$.

  Choose $S_{\nu, \beta}^t = a(t)\lambda_{\beta}(t) \tilde{S}_{\nu, \beta}^t, \quad 0 \leq \beta \leq N$ as
  before.   Since both $\snorm{\tilde{S}_{\nu, \beta}^t}{h_0^{\nu}}$ and $\lambda_{\beta}(t)$ are
  uniformly bounded, we can assume
   \begin{align*}
     \lim_{i \to \infty} \lambda_{\beta}(t_i) = \bar{\lambda}_{\beta}, \quad
     \lim_{i \to \infty} \tilde{S}_{\nu, \beta}^{t_i} = \bar{S}_{\nu, \beta},
         \quad \beta=0, 1, \cdots, N.
   \end{align*}
  Notice that $\bar{\lambda}_N =1$.

  Define
  $\displaystyle I(\alpha, t)
  = \int_M (\sum_{\beta=0}^N \snorm{\lambda_{\beta}(t)\tilde{S}_{\nu, \beta}}{h_0^{\nu}}^2)^{-\frac{\alpha}{\nu}} \omega^n$.
  Clearly, $I(\alpha, t_i) \leq \int_M \snorm{\tilde{S}_{\nu, N}}{h_0^{\nu}}^{- \frac{2\alpha}{\nu}} \omega^n$.
  As $\bar{S}_{\nu, N} \in H^0(K_M^{-\nu})$ and $\alpha_{\nu, 1}> \frac{n}{n+1}$,
  we can find a number $\alpha \in (\frac{n}{n+1}, \alpha_{\nu, 1})$
  such that  $\int_M |\bar{S}_{\nu, N}|_{h_0^{\nu}}^{-\frac{2\alpha}{\nu}} \omega^n < C$.
  By the semi continuity of singularity exponent, (c.f.~\cite{DK}, ~\cite{Tian90}),
  we have
  \begin{align*}
  \limsup_{i \to \infty} I(\alpha, t_i) \leq
   \lim_{i \to \infty} \int_M |\tilde{S}_{\nu, N}|_{h_0^{\nu}}^{-\frac{2\alpha}{\nu}}
     \omega^n  = \int_M |\bar{S}_{\nu, N}|_{h_0^{\nu}}^{-\frac{2\alpha}{\nu}} \omega^n <  C.
  \end{align*}
  Along a tamed flow, inequality
  $\displaystyle |\varphi - \sup_M \varphi
         - \frac{1}{\nu} \log \sum_{\beta=0}^{N}
          \snorm{\lambda_{\beta}(t) \tilde{S}_{\nu, \beta}^t}{h_0^{\nu}}^2| < C$
  holds.    It follows that  $\int_M e^{\alpha(\varphi_{t_i}-\sup_M \varphi_{t_i})} \omega^n <
  C$.
  Recall that $\dot{\varphi} = \log \frac{\omega_{\varphi}^n}{\omega^n} + \varphi +u_{\omega}$, we have
  \begin{align*}
     \frac{1}{V} \int_M e^{-\alpha(\varphi_{t_i} - \sup_M \varphi_{t_i})} \cdot
     e^{\varphi_{t_i} +u_{\omega} - \dot{\varphi}} \omega_{\varphi_{t_i}}^n < C.
  \end{align*}
 Note that both  $\dot{\varphi}$ and $u_{\omega}$  are uniformly
 bounded. It follows from the convexity of exponential map that
 $\alpha \sup_M \varphi_{t_i}+ (1- \alpha) \frac{1}{V}\int_M \varphi_{t_i} \omega_{\varphi_{t_i}}^n <
 C$. \;    In other words, it is
 \begin{align}
        \sup_M \varphi_{t_i} < \frac{1-\alpha}{\alpha}
  \frac{1}{V} \int_M (-\varphi_{t_i}) \omega_{\varphi_{t_i}}^n + C.
  \label{eqn: cofib}
 \end{align}
 Combining this with inequality (\ref{eqn: supphi}), we have
   \begin{align*}
      \sup_M \varphi_{t_i} < n\frac{1- \alpha}{\alpha} \sup_M \varphi_{t_i} + C.
   \end{align*}
 Since $\alpha \in (\frac{n}{n+1}, 1)$, it follows that
   $\displaystyle \sup_M \varphi_{t_i}$ is uniformly bounded from above.
   Consequently, $\varphi_{t_i}$ is uniformly bounded.
   This contradicts to our assumption for $\varphi_{t_i}$.
 \end{proof}

  By more careful analysis, we can improve this theorem a little
  bit.

 \begin{proposition}
   Let
    $\displaystyle X_t \triangleq \frac{1}{\nu} \log (\sum_{\beta=0}^{N} \snorm{\lambda_{\beta}(t) \tilde{S}_{\nu,
     \beta}^{t}}{h_0^{\nu}}^2)$.
   There is a constant $C$ such that
   \begin{align}
     \left| \frac{1}{V} \int_M \{\st \partial \varphi_{t_i} \wedge \bar{\partial}
      \varphi_{t_i}
        - \st \partial X_{t_i} \wedge \bar{\partial} X_{t_i} \} \wedge \omega^{n-1} \right| < C.
   \label{eqn: xphiequiv}
   \end{align}
 \end{proposition}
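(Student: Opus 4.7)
The plan is to invoke the strong partial $C^0$-estimate (\ref{eqn: strongC0e}) to write the evolving potential as $X_t$ plus a bounded error, and then reduce the difference of the two Dirichlet energies to pairings of that bounded error against $(1,1)$-currents of uniformly bounded mass.

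First I would set $u_t := \varphi_t - \sup_M \varphi_t$ and $\psi_t := u_t - X_t$. Since the shift $\sup_M \varphi_t$ depends only on $t$, $\partial u_t = \partial \varphi_t$, so the integrand on the left of (\ref{eqn: xphiequiv}) is unchanged when $\varphi_t$ is replaced by $u_t$. By Proposition~\ref{proposition: pstrong}, $\norm{\psi_t}{C^0(M)} \leq C$ uniformly along the flow. Expanding $u = X + \psi$ gives the polarization identity
\begin{align*}
\st \partial u \wedge \db u - \st \partial X \wedge \db X
= \st \partial X \wedge \db \psi + \st \partial \psi \wedge \db X + \st \partial \psi \wedge \db \psi,
\end{align*}
so the task reduces to bounding the integral of each of the three terms on the right against $\frac{1}{V}\omega^{n-1}$.

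Next I would integrate by parts in order to shift derivatives off the (possibly unbounded below) quasi-plurisubharmonic $X_t$ and onto the bounded $\psi_t$. The two cross terms are complex conjugates of each other, and each satisfies
\begin{align*}
\int_M \st \partial X_t \wedge \db \psi_t \wedge \omega^{n-1}
= -\int_M \psi_t\, \st \ddb X_t \wedge \omega^{n-1}.
\end{align*}
Computing the Chern curvature of the Hermitian metric $h_0^\nu$ on $K_M^{-\nu}$, one finds that $\omega + \st \ddb X_t$ decomposes as the pull-back $\frac{1}{\nu}\Phi_t^*\omega_{FS}$ (which is $\geq 0$) plus the bounded smooth form $\st \ddb u_\omega = \omega - \mathrm{Ric}(\omega)$; hence $\omega + \st \ddb X_t$ is a positive $(1,1)$-current modulo a uniformly bounded smooth perturbation, and in particular has cohomologically fixed mass $V$ against $\omega^{n-1}$. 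Pairing with the uniformly bounded $\psi_t$ then yields a bounded quantity. The pure quadratic term $\int_M \st \partial \psi \wedge \db \psi \wedge \omega^{n-1} = -\int_M \psi_t\, \st \ddb \psi_t \wedge \omega^{n-1}$ is controlled in exactly the same way, using $\st \ddb \psi_t = (\omega_{\varphi_t} - \omega) - \st \ddb X_t$, again a difference of currents of cohomologically fixed mass.

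The main obstacle is the rigorous justification of the self-adjoint integration-by-parts identity $\int_M \psi_t\, \st \ddb X_t \wedge \omega^{n-1} = \int_M X_t\, \st \ddb \psi_t \wedge \omega^{n-1}$ when $X_t$ has logarithmic poles along the common base locus of $\{\lambda_\beta(t)\tilde S_{\nu,\beta}^t\}_{\beta=0}^N$. This is a standard result from pluripotential theory, proved by approximating $X_t$ monotonically from above by smooth $\omega$-psh functions, applying the identity in the smooth setting, and passing to the limit by monotone convergence (using that $\omega + \st \ddb X_t$ is a positive current modulo a bounded smooth form). Once this is in place, every term reduces to a pairing of $\psi_t$ with a current of cohomologically fixed mass, yielding the desired uniform bound.
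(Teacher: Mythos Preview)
Your argument is correct and rests on the same two ingredients as the paper's: the strong partial $C^0$-estimate $|\varphi-\sup_M\varphi-X_t|<C$ and the lower bound $\st\ddb X_t\geq -\mathrm{Ric}(\omega)$ (equivalently $\triangle_\omega X_t\geq -R$). The packaging differs slightly. You polarize $\st\partial u\wedge\db u-\st\partial X\wedge\db X$ into two cross terms and a diagonal term and bound each by pairing the bounded function $\psi_t$ against positive $(1,1)$-forms of fixed mass. The paper instead uses the single identity
\[
\frac{1}{V}\int_M\bigl(\st\partial\varphi\wedge\db\varphi-\st\partial X\wedge\db X\bigr)\wedge\omega^{n-1}
=\frac{1}{V}\int_M(X-\varphi)\,(\triangle X+\triangle\varphi)\,\omega^n,
\]
adds and subtracts constants so that both factors have a sign (using $\triangle X+\triangle\varphi+C_0>0$, which combines your two positivity inputs $\triangle X\geq -R$ and $\triangle\varphi>-n$), and reads off the bound in two lines. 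Your route is a bit longer but equally valid.

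One correction: your ``main obstacle'' paragraph is unnecessary. For each fixed $t$ one has $\lambda_\beta(t)>0$ for all $\beta$, and since $\{\tilde S_{\nu,\beta}^t\}_{\beta=0}^{N}$ is a basis of $H^0(K_M^{-\nu})$ with $K_M^{-\nu}$ very ample, the sum $\sum_\beta\snorm{\lambda_\beta(t)\tilde S_{\nu,\beta}^t}{h_0^\nu}^2$ is strictly positive everywhere. Hence $X_t$ is a smooth function on $M$ for every $t$, there is no base locus, and ordinary integration by parts applies with no appeal to pluripotential theory.
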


 \begin{proof}
   Since every $\tilde{S}_{\nu, \beta}^{t_i}$ is a holomorphic
   section, direct calculation shows that
   \begin{align*}
   \triangle X_{t_i} \geq -R
   \end{align*}
   where $\triangle$, $R$ are the Laplacian operator and
   the scalar curvature under the metric $\omega$.
   As $\triangle \varphi + n >0$, we can choose a constant
   $C_0$ such that  $\triangle \varphi + \triangle X_{t_i} + C_0>
   0$.    For the simplicity of
   notation, we omit the subindex $t_i$ in the following argument.
   So we have conditions
  \begin{align*}
       |\varphi -\sup_M \varphi - X|< C, \quad   \triangle X + \triangle \varphi + C_0>0.
  \end{align*}
   Having these conditions at hand, direct computation gives us
   \begin{align*}
     &\qquad \frac{1}{V}\int_M \{\st \partial \varphi \wedge \bar{\partial} \varphi
       - \st \partial X \wedge \bar{\partial} X \} \wedge \omega^{n-1}\\
   &=\frac{1}{V}\int_M (X-\varphi) (\triangle X + \triangle \varphi) \omega^n\\
   &=\frac{1}{V}\int_M (X-\varphi+ \sup_M \varphi +C) (\triangle X + \triangle \varphi)
   \omega^n\\
   &= \frac{1}{V}\int_M (X-\varphi + \sup_M \varphi +C) (\triangle X + \triangle \varphi + C_0)
   \omega^n  - \frac{C_0}{V} \int_M (X-\varphi+ \sup_M \varphi +C) \omega^n\\
   &\geq -\frac{C_0}{V} \int_M (X-\varphi+ \sup_M \varphi + C) \omega^n\\
   &\geq -2C_0C.
   \end{align*}
  On the other hand, similar calculation shows
    \begin{align*}
     &\qquad \frac{1}{V}\int_M \{\st \partial \varphi \wedge \bar{\partial} \varphi
       - \st \partial X \wedge \bar{\partial} X \} \wedge \omega^{n-1}\\
  % &=\frac{1}{V}\int_M (X-\varphi) (\triangle X + \triangle \varphi) \omega^n\\
  % &=\frac{1}{V}\int_M (X-\varphi+ \sup_M \varphi -C) (\triangle X + \triangle \varphi)
  % \omega^n\\
  % &= \frac{1}{V}\int_M (X-\varphi + \sup_M \varphi -C) (\triangle X + \triangle \varphi + C_0)
  % \omega^n  - \frac{C_0}{V} \int_M (X-\varphi+ \sup_M \varphi -C) \omega^n\\
   &\leq -\frac{C_0}{V} \int_M (X-\varphi+ \sup_M \varphi- C) \omega^n\\
   &\leq 2C_0C.
   \end{align*}
 Consequently, we have
 \begin{align*}
   \left| \frac{1}{V}\int_M \{\st \partial \varphi \wedge \bar{\partial} \varphi
       - \st \partial X \wedge \bar{\partial} X \} \wedge
       \omega^{n-1} \right| \leq 2C_0C.
 \end{align*}
 \end{proof}

 It follows from this Proposition that
 inequality (\ref{eqn: dsupphi}) implies
  \begin{align}
   \frac{1}{V} \int_M (-\varphi) \omega_{\varphi}^n \leq
  n \sup_M \varphi  -  \frac{n-1}{V} \int_M \st \partial X \wedge
     \bar{\partial} X \wedge \omega^{n-1} + C.
  \label{eqn: dxsupphi}
 \end{align}

 Similar to the theorems in~\cite{Tian90}, we can prove the
 following theorem.

 \begin{theorem}
 Suppose $\{(M^n, g(t)), 0 \leq t < \infty \}$ is a \KRf tamed by
 $\nu$,  $M$ is a Fano manifold satisfying $\alpha_{\nu, 2}>\frac{n}{n+1}$
 and
 $\alpha_{\nu, 1} > \frac{1}{2- \frac{n-1}{(n+1)\alpha_{\nu,2}}}$.
 Then along this flow, $\varphi$ is
 uniformly bounded.  In particular, this flow converges to a KE
 metric exponentially fast.
 \label{theorem: nuconvr}
 \end{theorem}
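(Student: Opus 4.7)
The plan is to emulate the proof of Theorem~\ref{theorem: nuconv}, arguing by contradiction from the assumption $\sup_M\varphi_{t_i}\to\infty$ along a sequence $t_i\to\infty$, but with two essential strengthenings: (a)~the coarse bound~(\ref{eqn: supphi}) is replaced by the sharper~(\ref{eqn: dxsupphi}) which records the Dirichlet energy $D(X):=\frac{1}{V}\int_M\st\partial X\wedge\bar\partial X\wedge\omega^{n-1}$, and (b)~a second $\alpha$-invariant inequality is extracted from $\alpha_{\nu,2}$ by applying the same machinery to the two-section companion of $X$. Fix $\alpha_j<\alpha_{\nu,j}$ ($j=1,2$) close to the thresholds throughout.

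Step~1 reproduces the derivation of~(\ref{eqn: cofib}) verbatim, using the strong partial $C^0$-estimate~(\ref{eqn: strongC0e}), the pointwise bound $\sum|\lambda_\beta\tilde S_\beta|^2\ge|\tilde S_N|^2$, semi-continuity of singularity exponents, and the uniform boundedness of $\dot\varphi$ and $u_\omega$. This yields $\sup_M\varphi_{t_i}\le\frac{1-\alpha_1}{\alpha_1}\frac{1}{V}\int_M(-\varphi_{t_i})\omega_{\varphi_{t_i}}^n+C$. Now substituting the improved~(\ref{eqn: dxsupphi}) in place of~(\ref{eqn: supphi}) produces
\[
\bigl[(n+1)\alpha_1-n\bigr]\sup_M\varphi_{t_i}\le-(n-1)(1-\alpha_1)\,D(X_{t_i})+C.
\]
In the nontrivial regime $\alpha_1<n/(n+1)$ (otherwise Theorem~\ref{theorem: nuconv} already concludes), the coefficient on the left is negative, so this amounts to an upper bound $D(X_{t_i})\le A\cdot\sup_M\varphi_{t_i}+C'$ with $A=\frac{n-(n+1)\alpha_1}{(n-1)(1-\alpha_1)}$.

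Step~2 produces a complementary lower-type bound using $\alpha_{\nu,2}$. Introduce the two-section companion $\tilde X_{t_i}:=\frac{1}{\nu}\log\bigl(|\tilde S_N^{t_i}|^2+|\tilde S_{N-1}^{t_i}|^2\bigr)$, which lies in $\mathscr{P}_{\nu,2}$ so that $\int_M e^{-\alpha_2\tilde X_{t_i}}\omega^n\le C$. An elementary pointwise estimate $\sum|\lambda_\beta\tilde S_\beta|^2\ge\tfrac12\lambda_{N-1}^2\bigl(|\tilde S_N|^2+|\tilde S_{N-1}|^2\bigr)$ gives the comparison $X_{t_i}\ge\tilde X_{t_i}+\tfrac{2}{\nu}\log\lambda_{N-1}(t_i)-\tfrac{\log 2}{\nu}$. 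Rerunning Step~1 with $\tilde X_{t_i}$ in place of $X_{t_i}$ and reapplying~(\ref{eqn: dxsupphi}) produces, after rearrangement, a second inequality of the schematic form
\[
\bigl[(n+1)\alpha_2-n\bigr]\sup_M\varphi_{t_i}+(n-1)(1-\alpha_2)\,D(X_{t_i})\le-\tfrac{2\alpha_2}{\nu}\log\lambda_{N-1}(t_i)+C.
\]
A direct algebraic check shows that the numerical hypothesis $\alpha_{\nu,1}>1/\bigl(2-(n-1)/((n+1)\alpha_{\nu,2})\bigr)$ is exactly what is needed for an appropriate nonnegative linear combination of this with the Step~1 inequality to force $\sup_M\varphi_{t_i}$ to remain uniformly bounded, contradicting the assumption. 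Exponential convergence then follows from Theorem~\ref{theorem: alphag} and standard arguments (cf.~\cite{PSS,CT1}).

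The main obstacle is the $\log\lambda_{N-1}(t_i)$ remainder in Step~2. Should $\lambda_{N-1}(t_i)\to 0$ along the sequence, the two-section comparison becomes lossy and the right-hand side of the Step~2 inequality blows up, so the final linear combination does not obviously close. The remedy is to argue by dichotomy: if $\liminf\lambda_{N-1}(t_i)>0$, then $X_{t_i}$ and $\tilde X_{t_i}$ are uniformly comparable and Step~2 goes through cleanly; whereas if $\lambda_{N-1}(t_i)\to 0$, one must show that in this degeneration $X_{t_i}$ is asymptotically dominated by the one-section function $\frac{1}{\nu}\log|\bar S_N|^2$, and that the resulting concentration forces $D(X_{t_i})$ itself to grow at least as fast as $|\log\lambda_{N-1}(t_i)|$, absorbing the remainder. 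Making this precise requires subsequential extraction of limits of $\{\tilde S_\beta^{t_i}\}$, the uniform $C^0$- and $C^1$-bounds on plurianticanonical sections developed in Section~3, and the semi-continuity of singularity exponents of Demailly--Koll\'ar; this analysis is the technical heart of the proof.
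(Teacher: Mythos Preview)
Your proposal is correct and matches the paper's approach: both hinge on the same four ingredients---the $\alpha_{\nu,1}$- and $\alpha_{\nu,2}$-inequalities, the refined estimate~(\ref{eqn: dxsupphi}), and the crucial lower bound $D(X_{t_i})\ge -\tfrac{1-\epsilon}{\nu}\log\lambda_{N-1}(t_i)-C_\epsilon$, which the paper records as Claim~3 and defers to~\cite{Tian91}. The only difference is organizational: the paper first disposes of the case $\bar\lambda_{N-1}>0$ (its Claim~1) and then eliminates $\log\lambda_{N-1}$ between Claim~2 and Claim~3$+$(\ref{eqn: dxsupphi}) before invoking $\alpha_{\nu,1}$, whereas you fold~(\ref{eqn: dxsupphi}) into both steps and handle the degeneration $\lambda_{N-1}\to 0$ as a closing dichotomy---but you have correctly identified the Dirichlet-energy lower bound as the technical heart.
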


 \begin{proof}

  Suppose not. We have a sequence of times $t_i$ such that
   $\displaystyle \lim_{i \to \infty} \snorm{\varphi_{t_i}}{C^0(M)}= \infty$.

  As before, we have
   \begin{align*}
     \lim_{i \to \infty} \tilde{S}_{\nu, \beta}^{t_i} = \bar{S}_{\nu, \beta},
         \quad \beta=0, 1, \cdots, N; \quad
     \lim_{i \to \infty} \lambda_{\beta}(t_i) = \bar{\lambda}_{\beta};
      \quad \bar{\lambda}_N =1.
   \end{align*}

  \setcounter{claim}{0}
  \begin{claim}
     $\bar{\lambda}_{N-1} =0$.
  \end{claim}

   Otherwise, $\bar{\lambda}_{N-1}>0$.  Fix some $\alpha \in (\frac{n}{n+1}, \alpha_{2,\nu})$,
  we calculate
    \begin{align*}
      I(\alpha, t_i) &= \int_M (\sum_{\beta=0}^N \snorm{\lambda_{\beta}(t)\tilde{S}_{\nu, \beta}^{t_i}}{h_0^{\nu}}^2)^{-\frac{\alpha}{\nu}}
      \omega^n\\
      &\leq \int_M ( \snorm{\lambda_{N-1}(t_i)\tilde{S}_{\nu, N-1}^{t_i}}{h_0^{\nu}}^2  + \snorm{\tilde{S}_{\nu, N}^{t_i}}{h_0^{\nu}}^2)^{-\frac{\alpha}{\nu}}
      \omega^n\\
      &\leq (\lambda_{N-1}(t_i))^{-2 \alpha} \int_M (\snorm{\tilde{S}_{\nu, N-1}^{t_i}}{h_0^{\nu}}^2
      + \snorm{\tilde{S}_{\nu, N}^{t_i}}{h_0^{\nu}}^2)^{-\frac{\alpha}{\nu}} \omega^n.\\
    \end{align*}
  For simplicity of notation, we look $h_0^{\nu}$ as the default
  metric on the line bundle $K_M^{-\nu}$ without writing it out
  explicitly.  Then semi continuity property implies
   \begin{align*}
       \lim_{i \to \infty} \int_M (\snorm{\tilde{S}_{\nu, N-1}^{t_i}}{}^2
      + \snorm{\tilde{S}_{\nu, N}^{t_i}}{}^2)^{-\frac{\alpha}{\nu}} \omega^n
      =\int_M (\snorm{\bar{S}_{\nu, N-1}}{}^2 + \snorm{\bar{S}_{\nu, N}}{}^2)^{-\frac{\alpha}{\nu}}
      \omega^n < \infty.
   \end{align*}
   It follows that
   \begin{align*}
      I(\alpha, t_i) < 2
      (\bar{\lambda}_{N-1})^{-\frac{2\alpha}{\nu}} \int_M (\snorm{\bar{S}_{\nu, N-1}}{}^2
      + \snorm{\bar{S}_{\nu, N}}{}^2)^{-\frac{\alpha}{\nu}}
      \omega^n < C_{\alpha}.
   \end{align*}
  Recall the definition of $I(\alpha, t_i)$, equation (\ref{eqn: strongC0e}) implies
  $ \displaystyle   \int_M e^{-\alpha(\varphi(t_i) - \sup_M \varphi(t_i))}   \omega^n < C$.
  Since $\alpha > \frac{n}{n+1}$, as we did in previous theorem,
  we will obtain the boundedness of $\snorm{\varphi_{t_i}}{C^0(M)}$.
  This contradicts to  the initial assumption of $\varphi_{t_i}$!
  Therefore $\bar{\lambda}_{N-1}=0$ and we finish the proof of this Claim 1. \\

   \begin{claim}
    For every small constant $\epsilon$, there is a constant $C$ such that
    \begin{align}
      (1-\epsilon) \alpha_{\nu, 2} \sup_M \varphi_{t_i} +
        (1- (1-\epsilon)\alpha_{\nu, 2}) \frac{1}{V} \int_M \varphi_{t_i}  \omega_{\varphi_{t_i}}^n
       \leq -2(1-\epsilon)\frac{\alpha_{\nu, 2}}{\nu} \log \lambda_{N-1}(t_i) +
       C.
      \label{eqn: a2e1}
    \end{align}
    \label{claim: a2e1}
  \end{claim}

  Fix $\epsilon$ small, we have
  \begin{align*}
   I((1-\epsilon)\alpha_{\nu, 2}, t_i) &=  \int_M (\sum_{\beta=0}^N  \snorm{\lambda_{\beta} \tilde{S}_{\nu,
      \beta}^{t_i}}{}^2)^{- \frac{(1-\epsilon)\alpha_{\nu, 2}}{\nu}} \omega^n\\
   &\leq  \int_M \{ \lambda_{N-1}(t_i)^2 [\snorm{\tilde{S}_{\nu, N-1}^{t_i}}{}^2
   + \snorm{\tilde{S}_{\nu, N}^{t_i}}{}^2]\}^{-\frac{(1-\epsilon)\alpha_{\nu, 2}}{\nu}}
   \omega^n\\
   & < C\lambda_{N-1}(t_i)^{-\frac{2(1-\epsilon)\alpha_{\nu, 2}}{\nu}}.
  \end{align*}
   The tamed condition implies that
  \begin{align*}
        \int_M e^{-(1-\epsilon)\alpha_{\nu, 2} (\varphi_{t_i} - \sup_M
        \varphi_{t_i})} \omega^n  < C \lambda_{N-1}(t_i)^{-\frac{2(1-\epsilon)\alpha_{\nu, 2}}{\nu}}.
  \end{align*}
  Plugging the equation
   $ \dot{\varphi} = \log \frac{\omega_{\varphi}^n}{\omega^n} + \varphi +u_{\omega}$
  into the previous inequality implies
  \begin{align*}
   C \lambda_{N-1}(t_i)^{-\frac{2(1-\epsilon)\alpha_{\nu, 2}}{\nu}}
    &> \int_M e^{-(1-\epsilon) \alpha_{\nu, 2}(\varphi_{t_i} -\sup_M \varphi_{t_i})}
     \cdot e^{\varphi_{t_i} +u_{\omega} - \dot{\varphi}_{t_i}}
     \omega_{\varphi_{t_i}}^n\\
    &=\int_M  e^{(1-\epsilon) \alpha_{\nu, 2} \sup_M \varphi_{t_i} + (1- (1-\epsilon)\alpha_{\nu, 2}) \varphi_{t_i}}
       \cdot e^{u_{\omega} - \dot{\varphi}_{t_i}}
       \omega_{\varphi_{t_i}}^n\\
    & \geq e^{-\norm{u_{\omega}}{C^0(M)} - \norm{\dot{\varphi}_{t_i}}{C^0(M)}}
        \int_M  e^{(1-\epsilon)\alpha_{\nu, 2} \sup_M \varphi_{t_i} + (1- (1-\epsilon)\alpha_{\nu, 2}) \varphi_{t_i}}
        \omega_{\varphi_{t_i}}^n\\
    &\geq e^{-\norm{u_{\omega}}{C^0(M)} - \norm{\dot{\varphi}_{t_i}}{C^0(M)}}
        \cdot V \cdot e^{(1-\epsilon)\alpha_{\nu, 2} \sup_M \varphi_{t_i} +
        (1- (1-\epsilon)\alpha_{\nu, 2}) \frac{1}{V} \int_M \varphi_{t_i}
        \omega_{\varphi_{t_i}}^n}
  \end{align*}
  Taking logarithm on both sides, we obtain inequality (\ref{eqn: a2e1}).
  This finishes the proof of Claim~\ref{claim: a2e1}.\\

  \begin{claim}
     For every small number $\epsilon>0$, there is a constant
     $C_{\epsilon}$ such that
  \begin{align}
      \frac{1}{V} \int_M \st \partial X_{t_i} \wedge
      \bar{\partial} X_{t_i} \wedge \omega^{n-1}
      \geq - \frac{(1-\epsilon)}{\nu}\log \lambda_{N-1}(t_i) - C_{\epsilon}.
   \label{eqn: a2e2}
  \end{align}
  \label{claim: a2e2}
  \end{claim}

  This proof is the same as the corresponding proof in
  \cite{Tian91}. So we omit it. \\

   Plugging inequality (\ref{eqn: a2e2}) into  inequality (\ref{eqn: dxsupphi}),
   together with inequality (\ref{eqn: a2e1}), we obtain
   \begin{align*}
   \left\{
   \begin{array}{ll}
   &\frac{1}{V} \int_M (-\varphi_{t_i}) \omega_{\varphi_{t_i}}^n \leq
   n \sup_M \varphi_{t_i}  + \frac{(n-1)(1-\epsilon)}{\nu}\log \lambda_{N-1}(t_i) +
   C,\\
   &\quad \quad\\
   &(1-\epsilon)\alpha_{\nu, 2} \sup_M \varphi_{t_i} +
        (1- (1-\epsilon)\alpha_{\nu, 2}) \frac{1}{V} \int_M \varphi_{t_i}  \omega_{\varphi_{t_i}}^n
       \leq -\frac{2(1-\epsilon)\alpha_{\nu, 2}}{\nu} \log \lambda_{N-1}(t_i) +C.
   \end{array}
   \right.
   \end{align*}
   Eliminating $\log \lambda_{N-1}(t_i)$, we have
   \begin{align*}
    \frac{1}{V} \int_M (-\varphi_{t_i}) \omega_{\varphi_{t_i}}^n
    \leq  \frac{(n+1) + (n-1)\epsilon}{((n+1)-(n-1)\epsilon)\alpha_{\nu, 2}
    -(n-1)} \alpha_{\nu, 2}  \sup_M \varphi_{t_i} + C.
   \end{align*}
  As inequality (\ref{eqn: cofib}), we have $\sup_M \varphi_{t_i} \leq
  \frac{1-(1-\epsilon)\alpha_{\nu, 1}}{(1-\epsilon)\alpha_{\nu, 1}} \frac{1}{V} \int_M (-\varphi_{t_i}) \omega_{\varphi_{t_i}}^n +
  C$.  It follows that
  \begin{align*}
     \{1 - \frac{(n+1) + (n-1)\epsilon}{((n+1)-(n-1)\epsilon)\alpha_{\nu, 2}
    -(n-1)} \cdot \alpha_{\nu, 2}  \cdot \frac{1-(1-\epsilon)\nu \alpha_{\nu, 1}}{(1-\epsilon)\nu \alpha_{\nu, 1}} \}
    \frac{1}{V} \int_M (-\varphi_{t_i}) \omega_{\varphi_{t_i}}^n
    \leq   C
  \end{align*}
  for every small constant $\epsilon$ and some big constant
  $C$ depending on $\epsilon$.        Since we have
  $\alpha_{\nu, 1} > \frac{1}{2- \frac{n-1}{(n+1)\alpha_{\nu, 2}}}= \frac{A}{A+1}$
  where $A= \frac{(n+1)\alpha_{\nu, 2}}{(n+1)\alpha_{\nu, 2} - (n-1)}$, so we
  can choose $\epsilon$ small enough such that
  \begin{align*}
   1 - \frac{(n+1) + (n-1)\epsilon}{((n+1)-(n-1)\epsilon)\alpha_{\nu, 2}
    -(n-1)} \cdot \alpha_{\nu, 2}  \cdot \frac{1-(1-\epsilon)\nu \alpha_{\nu, 1}}{(1-\epsilon)\nu \alpha_{\nu, 1}}>0.
  \end{align*}
  This implies that $\frac{1}{V} \int_M (-\varphi_{t_i})
  \omega_{\varphi_{t_i}}^n$ is uniformly bounded.  Therefore, $\snorm{\varphi_{t_i}}{C^0(M)}$
  is uniformly bounded.   Contradiction!
  \end{proof}

  \begin{remark}
    The methods applied in Theorem~\ref{theorem: nuconv} and
    Theorem~\ref{theorem: nuconvr} originate from \cite{Tian91}.
  \end{remark}

  \section{Plurianticanonical Line Bundles  and Tamed Condition}
   In this section, we study the basic properties of
    normalized holomorphic section $S \in H^0(K_M^{-\nu})$
   under the evolving metric  $\omega_{\varphi_t}$ and $h_t^{\nu}$.

\subsection{Uniform Bounds for Plurianticanonical Holomorphic Sections}
   Let $S$ be a normalized holomorphic section of $H^0(M,
 K_M^{-\nu})$, i.e., $\int_M \snorm{S}{h_t^{\nu}}^2 \omega_{\varphi_t}^n=1$.
 In this section, we will show both $\norm{\snorm{S}{h_t^{\nu}}}{C^0}$ and
 $\norm{\snorm{\nabla S}{h_t^{\nu}}}{C^0}$ are uniformly bounded.

 \begin{lemma}
  $\{(M^n, g(t)), 0 \leq t < \infty \}$ is a \KRf solution. There is a constant $A_0$ depending only on this
  flow such that  $\snorm{S}{h_t^{\nu}} < A_0 \nu^{\frac{n}{2}}$  whenever $S \in H^0(M, K^{-\nu})$
  satisfies $\int_M \snorm{S}{h_t^{\nu}}^2 \omega_{\varphi_t}^n =1$.
 \label{lemma: sectionbound}
 \end{lemma}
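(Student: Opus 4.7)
The plan is to apply Moser iteration to the function $u := \snorm{S}{h_t^{\nu}}^2$, combining the Bochner-type identity for a holomorphic section, Perelman's uniform scalar curvature estimate, and the uniform Sobolev inequality from Proposition~\ref{proposition: sobolev}.

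First, I would derive a pointwise differential inequality for $u$. Since $h_t = \det g_{\omega_t}$ is the natural Hermitian metric on $K_M^{-1}$, its Chern curvature is $\mathrm{Ric}(\omega_{\varphi_t})$, and therefore the Chern curvature of $(K_M^{-\nu}, h_t^{\nu})$ is $\nu\,\mathrm{Ric}(\omega_{\varphi_t})$. For a holomorphic section $S$, the standard identity
\[
\Lap u = |\nabla S|_{h_t^{\nu}}^{2} \;-\; \nu R \cdot u
\]
holds, where $\Lap$ and $R$ are taken with respect to $g(t)$. Perelman's estimate $\norm{R}{C^0} \leq \mathcal{B}$ (Proposition~\ref{proposition: perelman}) then gives the sub-elliptic inequality $\Lap u \geq -\nu\mathcal{B}\,u$ on $M$.

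Next I would run the Moser iteration. Multiplying the inequality by $u^{p-1}$ ($p\geq 2$) and integrating by parts yields
\[
\frac{4(p-1)}{p^2}\int_M |\nabla u^{p/2}|^2 \,\omega_{\varphi_t}^n \;\leq\; \nu\mathcal{B}\int_M u^p\,\omega_{\varphi_t}^n.
\]
Feeding this into the uniform Sobolev inequality of Proposition~\ref{proposition: sobolev} gives
\[
\norm{u}{L^{p\cdot n/(n-1)}} \leq (C\,\nu\mathcal{B}\,p)^{1/p}\,\norm{u}{L^p},
\]
for a constant $C$ depending only on $C_S$, $V$, and $n$. Iterating with $p_k = 2(n/(n-1))^k$ and using $\sum_{k\geq 0} 1/p_k = n/2$, the accumulated powers of $\nu\mathcal{B}$ combine to $(\nu\mathcal{B})^{n/2}$, while the remaining factors $\prod (Cp_k)^{1/p_k}$ converge to a finite constant. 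One obtains
\[
\norm{u}{L^\infty} \leq C_1\,(\nu\mathcal{B})^{n/2}\,\norm{u}{L^2}.
\]
Finally, I would close the estimate by interpolation: $\norm{u}{L^2}^2 \leq \norm{u}{L^\infty}\cdot \norm{u}{L^1} = \norm{u}{L^\infty}$, which when fed back yields $\norm{u}{L^\infty} \leq C_1^2\,(\nu\mathcal{B})^n$, i.e. $\snorm{S}{h_t^{\nu}} \leq A_0\,\nu^{n/2}$ with $A_0 := C_1\mathcal{B}^{n/2}$.

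The main obstacle is careful bookkeeping in the iteration to ensure that the $\nu$-dependence comes out exactly as $\nu^{n/2}$ rather than a larger power. Because the coefficient $K = \nu\mathcal{B}$ grows linearly in $\nu$, one must verify that $K$ enters the iteration only through the factors $K^{1/p_k}$, so that the total power $K^{\sum 1/p_k} = K^{n/2}$ is the sharp one; the other terms coming from $V^{-1/n}$ and $p_k$ in the Sobolev inequality must be treated as lower-order once one checks that $\nu\mathcal{B} p_k \geq V^{-1/n}$ for all $k\geq 0$ (which holds for all large enough $\nu$, and can be absorbed by enlarging $A_0$ otherwise). Everything else is routine.
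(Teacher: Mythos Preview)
Your proposal is correct and follows essentially the same approach as the paper: derive the Bochner identity $\Lap u = |\nabla S|^2 - \nu R\,u$, use Perelman's scalar curvature bound to get $\Lap u \geq -\nu\mathcal{B}\,u$, and run Moser iteration with the uniform Sobolev inequality. The only cosmetic difference is that the paper first uses the integrated identity and Kato's inequality to obtain an initial bound $\|u\|_{L^{n/(n-1)}} \lesssim \nu$ and iterates from there, whereas you iterate from $L^2$ and close with the interpolation $\|u\|_{L^2}^2 \leq \|u\|_{L^\infty}\|u\|_{L^1}$; both routes yield the same $\nu^{n/2}$ bound.
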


 \begin{proof}
    Fix a time $t$ and do all the calculations under the metric
    $g_t$ and $h_t^{\nu}$. Recall we have uniform Soblev constant and weak \Poincare
    constant, so we can do analysis uniformly independent of time
    $t$.

 \begin{clm}
    $S$ satisfies the equation
     \begin{align}
         \triangle \snorm{S}{}^2 = \snorm{\nabla S}{}^2 - \nu R
         \snorm{S}{}^2.
     \label{eqn: lapS}
     \end{align}
 \end{clm}
 %------------------------- Detailed Calculation----------------------------------------------------------------
  This calculation can be done locally. Fix a point $x \in M$. Let $U$ be a neighborhood of $x$ with
   coordinate $\{z^1, \cdots, z^n\}$.  Then $K_M^{-\nu}$ has a
   natural trivialization on the domain $U$ and we can write
  $ S = f (\D{}{z^1} \wedge \cdots \D{}{z^n})^{\nu} $ for some holomorphic function $f$
   locally. For convenience, we denote $h= \det g_{k\bar{l}}$. Therefore, direct
  calculation shows
  \begin{align*}
     \triangle \snorm{S}{}^2 &= g^{i\bar{j}} \{ f\bar{f} h^{\nu}
     \}_{i\bar{j}}\\
        &= g^{i\bar{j}} \{f_i \bar{f} h^{\nu} + \nu f \bar{f} h^{\nu-1} h_i
        \}_{\bar{j}}\\
        &=g^{i\bar{j}}\{ f_i \bar{f}_{\bar{j}} h^{\nu} + \nu \bar{f}h^{\nu-1} f_i h_{\bar{j}}
          + \nu f h^{\nu-1} \bar{f}_{\bar{j}}h_i  + \nu (\nu -1) f\bar{f} h^{\nu-2} h_i h_{\bar{j}}
               + \nu f \bar{f} h^{\nu-1} h_{i\bar{j}}\}.
  \end{align*}
  If we choose normal coordinate at the point $x$, then we have
  $h=1$, $h_i=h_{\bar{j}}=0$,  $h_{i\bar{j}}= - R_{i\bar{j}}$. Plugging
  them into previous equality we have
  \begin{align*}
          \triangle \snorm{S}{}^2 = g^{i\bar{j}}\{ f_i \bar{f}_{\bar{j}} - \nu f \bar{f} R_{i\bar{j}}\}
            = \snorm{\nabla S}{}^2 -\nu R \snorm{S}{}^2.
  \end{align*}
  So equation (\ref{eqn: lapS}) is proved.\\
 %--------------------End of Calculation---------------------------------------------------------------------------

 From equation (\ref{eqn: lapS}), we have
  \begin{align*}
     \int_M \snorm{\nabla S}{}^2 d\mu = \int_M \nu R \snorm{S}{}^2
     d\mu \leq \nu \mathcal{B}
  \end{align*}
 where $d\mu=\omega_{\varphi_t}^n$.
 Note that volume is fixed along \KRf solution, we can omit the volume term in Sobolev inequality by adjusting $C_S$.
 Therefore, Sobolev inequality implies
  \begin{align*}
     \{\int_M \snorm{S}{}^{\frac{2n}{n-1}} d\mu \}^{\frac{n-1}{n}}
      &\leq C_S \{ \int_M \snorm{S}{}^2 d\mu + \int_M \snorm{\nabla \snorm{S}{}}{}^2 d\mu \}\\
      &\leq C_S \{ \int_M \snorm{S}{}^2 d\mu + \int_M \snorm{\nabla S}{}^2 d\mu \}\\
      &\leq C_S \{1+ \nu \mathcal{B}\} < C\nu.
  \end{align*}
  Here we use the property that $\bar{\nabla} S=0$.

  Note that we have the inequality $\triangle \snorm{S}{}^2 \geq -\nu R
  \snorm{S}{}^2$.  Let $u=|S|^2$, we have
  \begin{align*}
     \triangle u  \geq - \nu \mathcal{B} u,
     \quad \norm{u}{L^{\frac{n}{n-1}}} < C\nu^{\frac12}
  \end{align*}
   Multiplying this inequality by $u^{\beta -1} (\beta>1)$ and integration by parts implies
  \begin{align*}
   \int_M |\nabla u^{\frac{\beta}{2}}|^2 d\mu \leq \frac{\beta^2}{4(\beta -1)}
   \cdot (\mathcal{B}\nu) \cdot \int_M u^{\beta} d\mu.
  \end{align*}
  Combining this with Sobolev inequality yields
  \begin{align*}
      \{\int_M u^{\frac{n\beta}{n-1}} d\mu\}^{\frac{n-1}{n}} \leq
      C_S (1 + \frac{\beta^2 \mathcal{B} \nu}{4(\beta -1)}) \int_M u^{\beta}d\mu
      \leq C \nu \beta \int_M u^{\beta} d\mu.
  \end{align*}
  It follows that $\norm{u}{L^{\frac{n\beta}{n-1}}} \leq
  (C\nu)^{\frac{1}{\beta}} \beta^{\frac{1}{\beta}}
  \norm{u}{L^{\beta}}$.  Let $\beta = (\frac{n}{n-1})^k$, we have
  \begin{align*}
   \norm{u}{L^{\infty}} \leq (C \nu)^{\sum_{k=1}^{\infty}
   (\frac{n-1}{n})^k}  \cdot (\frac{n}{n-1})^{\sum_{k=1}^{\infty} k(\frac{n-1}{n})^k} \cdot
   \norm{u}{L^{\frac{n}{n-1}}}
   \leq C \nu^{\sum_{k=0}^{\infty}
   (\frac{n-1}{n})^k}
   =C\nu^{n}.
  \end{align*}
 In other words, $\norm{|S|}{L^{\infty}} \leq C\nu^{\frac{n}{2}}$.
 Let $A_0$ be the last $C$, we finish the proof.
 \end{proof}

 \begin{corollary}
     $\{(M^n, g(t)), 0 \leq t < \infty \}$ is a \KRf solution.  Then
     we have
 \begin{align}
     F_{\nu}(x, t) \leq \frac{2 \log A_0 + n\log \nu}{\nu} < B_0,
     \quad \forall \; x \in M, \; t \in [0, \infty), \; \nu \geq 1.
 \label{eqn: unifbound}
 \end{align}
 Here $B_0$ is a constant depending only on $A_0$.
 \label{corollary: unifbound}
 \end{corollary}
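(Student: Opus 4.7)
The goal is to bound the sum $\sum_{\beta=0}^{N_\nu}|S_{\nu,\beta}^t|_{h_t^\nu}^2(x)$ uniformly, and then take $\tfrac{1}{\nu}\log$. The key reduction is the classical Bergman-kernel-on-the-diagonal trick: at a fixed point $x$, evaluation of a section is a linear functional on $H^0(K_M^{-\nu})$ taking values in the one-dimensional fiber $(K_M^{-\nu})_x$, so its kernel has codimension (at most) one. I would therefore rotate the orthonormal basis so that $S_{\nu,0}^t(x)=\dots=S_{\nu,N_\nu-1}^t(x)=0$ and only $S_{\nu,N_\nu}^t(x)$ is possibly nonzero. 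This reduces
\begin{align*}
    \sum_{\beta=0}^{N_\nu}|S_{\nu,\beta}^t|_{h_t^\nu}^2(x) = |S_{\nu,N_\nu}^t|_{h_t^\nu}^2(x),
\end{align*}
and $S_{\nu,N_\nu}^t$ is still a unit-norm holomorphic section of $K_M^{-\nu}$. Since $F_\nu$ is independent of the choice of orthonormal basis, this identity transfers to the original expression.

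Next I would apply Lemma~\ref{lemma: sectionbound} directly to $S_{\nu,N_\nu}^t$, obtaining the pointwise bound $|S_{\nu,N_\nu}^t|_{h_t^\nu}(x)\le A_0\nu^{n/2}$. Taking $\tfrac{1}{\nu}\log$ yields
\begin{align*}
    F_\nu(x,t) \;\le\; \frac{1}{\nu}\log\bigl(A_0^2\nu^n\bigr) \;=\; \frac{2\log A_0 + n\log\nu}{\nu},
\end{align*}
which is exactly the first inequality in the statement, valid for every $x\in M$, every $t\ge 0$, and every $\nu\ge 1$.

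For the uniform bound $B_0$, note that the real-variable function $\nu\mapsto \tfrac{2\log A_0 + n\log\nu}{\nu}$ is continuous on $[1,\infty)$ and tends to $0$ as $\nu\to\infty$ (since $\log\nu/\nu\to 0$); hence it attains a finite maximum on $[1,\infty)$, and one may take $B_0$ to be that maximum (which depends only on $A_0$ and $n$). There is no genuine obstacle here; the only nontrivial input is the basis-rotation observation together with the pointwise sup bound from the preceding lemma. The main subtlety worth emphasizing is that the bound from Lemma~\ref{lemma: sectionbound} is uniform in $t$, so the resulting $B_0$ is independent of time, as required for the subsequent applications to the tamed-flow framework.
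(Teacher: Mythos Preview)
Your proof is correct and follows essentially the same route as the paper: rotate the orthonormal basis so that at the given point $x$ only one section is nonzero, then apply Lemma~\ref{lemma: sectionbound} to that single unit-norm section to obtain $\sum_\beta |S_{\nu,\beta}^t|_{h_t^\nu}^2(x)\le A_0^2\nu^n$. Your additional remark that $\nu\mapsto(2\log A_0+n\log\nu)/\nu$ is bounded on $[1,\infty)$ makes the existence of $B_0$ explicit, which the paper leaves implicit.
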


 \begin{proof}
   According to the definition of $F_{\nu}$, we only need to show
 \begin{align*}
   \sum_{\beta=0}^{N_{\nu}} \snorm{S_{\nu, \beta}^t}{h_t^{\nu}}^2(x)
   \leq A_0^2 \nu^n
 \end{align*}
 for every orthonormal holomorphic section basis $\{S_{\nu,
 \beta}^t\}_{\beta=0}^{N_{\nu}}$.  However, fix $x$, by rotating basis,
 we can always find a basis such that
 \begin{align*}
   \snorm{S_{\nu, \beta}^t}{h_t^{\nu}}^2(x)=0, \quad  1 \leq \beta
   \leq N_{\nu}.
 \end{align*}
 Therefore, by Lemma~\ref{lemma: sectionbound}, we have
 \begin{align*}
    \sum_{\beta=0}^{N_{\nu}} \snorm{S_{\nu, \beta}^t}{h_t^{\nu}}^2(x)
    =\snorm{S_{\nu, 0}^t}{h_t^{\nu}}^2(x) \leq A_0^2
    \nu^n.
 \end{align*}
 \end{proof}

\begin{lemma}
    $\{(M^n, g(t)), 0 \leq t < \infty \}$ is a \KRf solution.
     There is a constant $A_1$ depending only on this
  flow  and $\nu$ such that  $\snorm{\nabla S}{h_t^{\nu}}< A_1$  whenever $S \in H^0(M, K^{-\nu})$
  satisfying   $\int_M \snorm{S}{h_t^{\nu}}^2 \omega_{\varphi_t}^n =1$.
 \label{lemma: gsbound}
 \end{lemma}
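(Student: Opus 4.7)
The plan is to mimic the structure of the previous lemma: derive a Bochner-type elliptic inequality for $v:=\snorm{\nabla S}{h_t^{\nu}}^2$, obtain an a priori $L^2$ bound on $v$, and upgrade it to an $L^\infty$ bound by Moser iteration using the uniform Sobolev constant from Proposition~\ref{proposition: sobolev}. The $L^2$ bound is essentially free: integrating the identity $\triangle \snorm{S}{}^2 = \snorm{\nabla S}{}^2 - \nu R \snorm{S}{}^2$ proved in Lemma~\ref{lemma: sectionbound} yields
\[
\int_M v\,\omega_{\varphi_t}^n \;=\; \nu\int_M R\snorm{S}{}^2\,\omega_{\varphi_t}^n \;\leq\; \nu\mathcal{B},
\]
where the last inequality uses Perelman's bound $\norm{R}{C^0}\leq\mathcal{B}$ and $\int_M\snorm{S}{}^2\omega_{\varphi_t}^n=1$.

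For the differential inequality I would compute $\triangle v$ via the Bochner--Kodaira--Nakano identity for $\partial S\in\Omega^{1,0}(K_M^{-\nu})$. Working in normal coordinates at a point, using $\db S=0$ and the fact that the Chern curvature of $K_M^{-\nu}$ equals $\nu\,\mathrm{Ric}(\omega_t)$, the formula produces schematically
\[
\triangle v \;\geq\; \snorm{\nabla^2 S}{}^2 + \snorm{\db\partial S}{}^2 + \bigl((\nu-1)\mathrm{Ric}(\omega_t)\bigr)_{i\bar j}(\nabla^i S)\overline{(\nabla^j S)},
\]
in which the tangent-bundle Ricci (coefficient $+1$) and the line-bundle curvature (coefficient $-\nu$) combine. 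The principal obstacle is that $\mathrm{Ric}$ is not pointwise bounded along the flow; only the scalar curvature is. To bypass this I would use the \KRf relation $\mathrm{Ric}(\omega_t)-\omega_t = -\st\ddb u_t$ together with Perelman's bounds $\norm{u_t}{C^0}, \norm{\nabla u_t}{C^0}\leq \mathcal{B}$. Substituting $\mathrm{Ric}=\omega_t-\st\ddb u_t$ splits the dangerous term into a bounded $\omega_t$-piece and a $\ddb u_t$-piece; the latter, when paired against a test function $v^{p-1}$ in the Moser integration, is transferred by integration by parts onto $\nabla u_t$ and $\nabla v^{p-1}$, producing only bounded coefficients plus gradient terms of $v$ which are absorbed by the dominant $\snorm{\nabla^2 S}{}^2$ on the right-hand side. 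In this way one obtains the integrated Bochner inequality
\[
\int_M \snorm{\nabla v^{p/2}}{}^2\omega_{\varphi_t}^n \;\leq\; C_1(\nu)\,p^2 \int_M v^p\omega_{\varphi_t}^n + C_2(\nu)\,p\int_M v^{p-1}\snorm{S}{}^2\omega_{\varphi_t}^n,
\]
where $\snorm{S}{}^2\leq A_0^2\nu^n$ by Lemma~\ref{lemma: sectionbound}.

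Finally, Moser iteration proceeds verbatim as in the preceding lemma: plug the above into the uniform Sobolev inequality, iterate along $p_k=(n/(n-1))^k$ starting from the $L^2$-bound of Step~1, and deduce $\norm{v}{L^\infty}\leq C(\nu)$, which is the desired estimate $\snorm{\nabla S}{h_t^{\nu}}\leq A_1$. The crux of the argument is the conversion of the unbounded Ricci term into controllable quantities via the Perelman-potential substitution $\mathrm{Ric}-\omega_t = -\st\ddb u_t$, after which the structure of the proof is purely Moser-iterative and identical in spirit to the $C^0$-bound on $\snorm{S}{h_t^\nu}$ already established.
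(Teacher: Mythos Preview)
Your proposal is correct and follows essentially the same approach as the paper: derive a Bochner formula for $\snorm{\nabla S}{}^2$, replace the uncontrolled Ricci curvature via the potential identity $R_{i\bar j}=g_{i\bar j}-\dot\varphi_{i\bar j}$ (equivalently your $\mathrm{Ric}=\omega_t-\st\ddb u_t$, since $\dot\varphi$ and $u_t$ differ by a constant), integrate by parts using the uniform $C^0$ and $C^1$ bounds on the potential, and then Moser-iterate. The paper's exact Bochner identity also contains first-derivative-of-scalar-curvature terms (schematically $S R_i \bar S_{\bar i}$) and makes explicit use of the auxiliary identity $S_{,i\bar j}=-\nu S R_{i\bar j}$ when integrating by parts, but these are absorbed by exactly the same mechanism you outline.
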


 \begin{proof}
   Fix a time $t$ and then do all the computations with respect to
   $g(t)$ and $h_t^{\nu}$.  Same as in the previous Lemma,
   we can do uniform analysis since the existence
   of uniform Sobolev and weak \Poincare constants.

 \setcounter{claim}{0}
 \begin{claim}
   $\snorm{\nabla S}{}^2$ satisfies the equation
  \begin{align}
    \triangle \snorm{\nabla S}{}^2
    &= \snorm{\nabla \nabla S}{}^2 + \nu^2 \snorm{Ric}{}^2\snorm{S}{}^2
          -\nu R \snorm{\nabla S}{}^2 \notag \\
  & \qquad -(2\nu-1) R_{i\bar{k}} S_k \bar{S}_{\bar{i}}
    -\nu \{S R_i \bar{S}_{\bar{i}} + \bar{S}  R_{\bar{i}}S_i\}.
   \label{eqn: nablas}
   \end{align}
 \label{claim: nablas}
 \end{claim}

%-------------------------------Detailed Calculation--------------------------------------------------
       Suppose $U$ to be a local coordinate around point $x$.
     Locally, we can rewrite
     \begin{align*}
     S&=f(\D{}{z^1} \wedge \cdots  \D{}{z^n})^{\nu},\\
     \nabla S &= \{ f_i + \nu f (\log h)_i \} dz^i \otimes (\D{}{z^1} \wedge \cdots
     \D{}{z^n})^{\nu}
     \end{align*}
     where $h = \det g_{k\bar{l}}$. It follows that
     \begin{align*}
       \snorm{\nabla S}{}^2 = g^{i\bar{j}}h^{\nu}
       (f_i + \nu f(\log h)_i)(\bar{f}_{\bar{j}} + \nu \bar{f} (\log
       h)_{\bar{j}}).
     \end{align*}
     Choose normal coordinate at point $x$. So at point $x$, we have
     $g_{i\bar{j}} = \delta_{i\bar{j}}$, $h=1$,  $h_i=h_{\bar{i}}=(\log h)_i = (\log h)_{\bar{i}}=0$,
     $h_{i\bar{j}}= (\log h)_{i\bar{j}}=- R_{i\bar{j}}$,
     $(\log h)_{ij}=(\log h)_{\bar{i}\bar{j}}=0$.
       So we compute
     \begin{align*}
       \triangle \snorm{\nabla S}{}^2 &=
       g^{k\bar{l}} \{  g^{i\bar{j}}h^{\nu} (f_i + \nu f(\log h)_i)(\bar{f}_{\bar{j}} + \nu \bar{f} (\log h)_{\bar{j}})
       \}_{k\bar{l}}\\
       &=g^{k\bar{l}} \{ -g^{i\bar{p}}g^{q\bar{j}} \D{g_{p\bar{q}}}{z^k} h^{\nu}(f_i + \nu f(\log h)_i)(\bar{f}_{\bar{j}}
       + \nu \bar{f}(\log h)_{\bar{j}})\\
       &\qquad \quad +\nu g^{i\bar{j}}h^{\nu-1} h_k (f_i + \nu f (\log h)_i)(\bar{f}_{\bar{j}} + \nu \bar{f} (\log h)_{\bar{j}}) \\
       &\qquad \quad + g^{i\bar{j}}h^{\nu} (f_{ik} + \nu f(\log h)_{ik} + \nu f_k (\log h)_i) (\bar{f}_{\bar{j}} +
          \nu \bar{f} (\log h)_{\bar{j}}) \\
       &\qquad \quad + g^{i\bar{j}} h^{\nu} (f_i + \nu f (\log h)_i)
       \nu \bar{f}(\log  h)_{\bar{j}k}\}_{\bar{l}}\\
     &=R_{j\bar{i}k\bar{k}} f_i \bar{f}_{\bar{j}}+ \nu h_{k\bar{k}} f_i \bar{f}_{\bar{i}} \\
     &\qquad + \nu f(\log h)_{k\bar{k} i} \bar{f}_{\bar{i}} + \nu f_k (\log h)_{i\bar{k}}\bar{f}_{\bar{i}} + f_{ik} \bar{f}_{\bar{i}\bar{k}}\\
     &\qquad +\nu^2 f \bar{f} (\log h)_{i\bar{k}}(\log
     h)_{\bar{i}k} + \nu f_i \bar{f} (\log h)_{k\bar{k} \bar{i}} + \nu f_i \bar{f}_{\bar{k}} (\log h)_{k\bar{i}}\\
     &=R_{j\bar{i}}f_i \bar{f}_{\bar{j}} - \nu R  \snorm{\nabla
     f}{}^2 - \nu f R_i \bar{f}_{\bar{i}} - \nu R_{i\bar{k}} f_k \bar{f}_{\bar{i}} \\
     & \qquad +\snorm{\nabla \nabla
     f}{}^2 + \nu^2  \snorm{f}{}^2 \snorm{Ric}{}^2 - \nu \bar{f} f_i R_{\bar{i}} -\nu R_{i\bar{k}} f_k \bar{f}_{\bar{i}}\\
     &= \snorm{\nabla \nabla S}{}^2 + \nu^2 \snorm{S}{}^2 \snorm{Ric}{}^2 - \nu R
     \snorm{\nabla S}{}^2  + (1-2\nu)R_{j\bar{i}}S_{i}\bar{S}_{\bar{j}}
       -\nu (S R_i \bar{S}_{\bar{i}} + \bar{S} R_{\bar{i}}S_i).
     \end{align*}
   So we finish the proof of Claim~\ref{claim: nablas}.\\
%-----------------------------------End of Calculation-----------------------------------------------------------------

   \begin{claim}
     $S$ satisfies the equation
     \begin{align}
      S_{,i\bar{j}} = - \nu S R_{i \bar{j}}
      \label{eqn: hessians}
     \end{align}
   \label{claim: hessians}
   \end{claim}

   %------------------------------Detailed Calculation-------------------------------------------------------
         Suppose $U$ to be a normal coordinate around point $x$.
     Locally, we can rewrite
     \begin{align*}
     S&=f(\D{}{z^1} \wedge \cdots  \D{}{z^n})^{\nu},\\
     \nabla S &= \{ f_i + \nu f (\log h)_i \} dz^i \otimes (\D{}{z^1} \wedge \cdots
     \D{}{z^n})^{\nu},\\
     \bar{\nabla } S &= f_{\bar{i}} d \bar{z^i} \otimes (\D{}{z^1} \wedge \cdots
     \D{}{z^n})^{\nu}=0.
     \end{align*}
   Recall $\Gamma_{ij}^k = g^{k\bar{l}} \D{g_{i{\bar{l}}}}{z^j}$, it vanishes  at point $x$.
   So $(\log h)_i$, $(\log h)_{ij}$ vanish at point  $x$.  Note that $f$ is holomorphic, and our
   connection is compatible both with the
   metric and the complex structure. So  $\bar{\nabla} \nabla S$ has
   only one term
   \begin{align*}
     \bar{\nabla} \nabla S &= \{ \nu f (\log h)_{i \bar{j}}\} d
     \bar{z^j} \otimes dz^i  \otimes (\D{}{z^1} \wedge \cdots
      \D{}{z^n})^{\nu}\\
     &=  -\nu fR_{i\bar{j}} d \bar{z^j} \otimes dz^i  \otimes (\D{}{z^1} \wedge \cdots
      \D{}{z^n})^{\nu}.
   \end{align*}
   It follows that $S_{,i\bar{j}} = - \nu S R_{i \bar{j}}$.
   Claim~\ref{claim: hessians} is proved.\\
%------------------------End of Calculation------------------------------------------------------------------

   \begin{claim}
    There is a constant $C$ such that
    $\norm{\snorm{\nabla S}{}}{L^{\frac{2n}{n-1}}} < C$ uniformly.
    \label{claim: nablaSLp}
   \end{claim}

   Integrate both sides of equation (\ref{eqn: nablas}) and we have
   \begin{align*}
    \int_M \snorm{\nabla \nabla S}{}^2 d\mu &\leq \int_M \nu R \snorm{\nabla
    S}{}^2 d\mu + (2\nu-1) \int_M R_{i\bar{k}}S_k \bar{S}_{\bar{i}}
    d\mu  + \nu \int_M \{ SR_i\bar{S}_{\bar{i}} + \bar{S}R_{\bar{i}}S_i  \}d\mu
   \end{align*}
   where $d\mu= \omega_{\varphi_t}^n$. Recall that $R_{i\bar{k}} = g_{i\bar{k}} -
   \dot{\varphi}_{i\bar{k}}$. It follows that
   \begin{align*}
   \int_M \snorm{\nabla \nabla S}{}^2 d\mu
      &\leq \int_M \nu R \snorm{\nabla S}{}^2 d\mu+(2\nu-1) \int_M \snorm{\nabla S}{}^2
     d\mu - (2\nu-1) \int_M \dot{\varphi}_{i\bar{k}} S_k
     \bar{S}_{\bar{i}} d\mu\\
     &\qquad \qquad + 2\int_M \{-\nu R \snorm{\nabla S}{}^2 + \nu^2 R^2\snorm{S}{}^2 \}
     d\mu\\
     &=2 \nu^2 \int_M R^2 \snorm{S}{}^2 d\mu - \nu \int_M R |\nabla S|^2 d\mu\\
     &\qquad \qquad + (2\nu-1) \int_M
     \snorm{\nabla S}{}^2 d\mu +(2\nu-1) \int_M \dot{\varphi}_i\{S_{,k\bar{k}} \bar{S}_{\bar{i}}
       + S_k\bar{S}_{,\bar{i} \bar{k}} \} d\mu
   \end{align*}
   Note that we used the property $S_{,l \bar{k}}= -\nu S R_{l\bar{k}}$.

 It follows that
   \begin{align*}
    \int_M \snorm{\nabla \nabla S}{}^2 d\mu &\leq
     2 \nu^2 \int_M R^2 \snorm{S}{}^2 d\mu
     - \nu \int_M R |\nabla S|^2 d\mu + (2\nu-1) \int_M
     \snorm{\nabla S}{}^2 d\mu \\
     &\qquad   - (2\nu-1)\nu \int_M S R \dot{\varphi}_i
     \bar{S}_{\bar{i}} d\mu
     +(2\nu-1) \int_M \bar{S}_{,\bar{i} \bar{k}}
     \dot{\varphi}_i S_k d\mu\\
     &\qquad \textrm{(Recall that $R,\dot{\varphi}, |\nabla \dot{\varphi}|, \snorm{S}{}, \; \int_M \snorm{S}{}^2 d\mu$ and $ \int_M \snorm{\nabla S}{}^2
     d\mu$ are all bounded.)}\\
     &\leq C\{1+ \int_M \snorm{\nabla S}{}d\mu + \int_M \snorm{\nabla \nabla S}{} \snorm{\nabla S}{} d\mu
     \}\\
     &\qquad \textrm{(Using \Holder inequality and interperlation: $xy \leq x^2 +
     y^2$)}\\
     &\leq C \{ 1+ V + \int_M \snorm{\nabla S}{}^2 d\mu  + \frac{1}{2C} \int_M \snorm{\nabla \nabla S}{}^2 d\mu
        +  2C \int_M \snorm{\nabla S}{}^2 d\mu \}\\
     &=\frac12 \int_M \snorm{\nabla \nabla S}{}^2 d\mu + C \{1+V + (2C+1) \mathcal{B}\nu \}.
   \end{align*}
   By redefining $C$, we have proved that $\int_M \snorm{\nabla \nabla S}{}^2 d\mu < C$ uniformly.
   On the other hand, we know
   \begin{align*}
      \int_M \snorm{\bar{\nabla} \nabla S}{}^2 d\mu
       = \int_M \nu^2  \snorm{S}{}^2 \snorm{Ric}{}^2 d\mu < C \int_M
       \snorm{Ric}{}^2 d\mu < C.
   \end{align*}
   Therefore Sobolev inequality tells us that
   \begin{align*}
       (\int_M \snorm{\nabla S}{}^{\frac{2n}{n-1}} d\mu)^{\frac{n-1}{n}}
    & \leq C_S \{ \int_M \snorm{\nabla S}{}^2 d\mu + \int_M \snorm{\nabla \snorm{\nabla
    S}{}}{}^2  \} d\mu \\
     & \leq C \{ \int_M \snorm{\nabla S}{}^2 d\mu + \int_M \snorm{\nabla \nabla S}{}^2 d\mu
           + \int_M \snorm{\bar{\nabla} \nabla S}{}^2 \} d\mu.
   \end{align*}
   This means $\norm{\snorm{\nabla S}{}}{L^{\frac{2n}{n-1}}}$ is uniformly bounded along
   \KRfd  So we have finished the proof of the Claim~\ref{claim: nablaSLp}.\\

   Fix $\beta>1$, multiplying $-\snorm{\nabla S}{}^{2(\beta -1)}$ to both
   sides of equation (\ref{eqn: nablas}) and doing integration  yields
   \begin{align*}
    &\qquad \qquad \frac{4(\beta-1)}{\beta^2} \int_M \left|  \nabla \snorm{\nabla S}{}^{\beta}\right|^2
     d\mu\\
   &=  -\int_M (\nu^2 \snorm{Ric}{}^2\snorm{S}{}^2 +\snorm{\nabla \nabla S}{}^2)\snorm{\nabla
      S}{}^{2(\beta-1)} d\mu + \int_M \nu R \snorm{\nabla S}{}^{2\beta} d\mu\\
   & \qquad
   + \underbrace{\int_M (2\nu-1) R_{i\bar{k}}S_k \bar{S}_{\bar{i}} \snorm{\nabla
   S}{}^{2(\beta-1)}d\mu}_{I}
   + \underbrace{\nu \int_M  \{S R_i \bar{S}_{\bar{i}} + \bar{S} R_{\bar{i}}S_i\} \snorm{\nabla
   S}{}^{2(\beta-1)}d\mu}_{II}.
   \end{align*}
  Plugging  $R_{i\bar{k}}= g_{i\bar{k}} - {\dot{\varphi}}_{i\bar{k}}$ into $I$
  yields
  \begin{align*}
   I= (2\nu-1) \int_M \snorm{\nabla S}{}^{2\beta} d\mu - (2\nu-1)
   \int_M \dot{\varphi}_{i\bar{k}} S_k \bar{S}_{\bar{i}} \snorm{\nabla
   S}{}^{2(\beta-1)} d\mu
  \end{align*}
  Since $S_{,l\bar{k}}= -\nu S R_{l\bar{k}}$, using the uniformly
  boundedness
  of $\dot{\varphi}$,$R$ and $\snorm{S}{}$, we have
  \begin{align*}
      \frac{I}{2\nu-1} &= \int_M \snorm{\nabla S}{}^{2\beta} d\mu
                     - \int_M  \dot{\varphi}_i (\nu R S   \bar{S}_{\bar{i}}- S_k \bar{S}_{,\bar{i} \bar{k}})
     \snorm{\nabla S}{}^{2(\beta-1)} d\mu\\
    &\qquad + (\beta-1) \int_M \dot{\varphi}_i S_k \bar{S}_{\bar{i}}\snorm{\nabla S}{}^{2(\beta-2)}(-\nu S R_{l\bar{k}} \bar{S}_{\bar{l}}
         + S_l \bar{S}_{,\bar{l}\bar{k}})  d\mu\\
    &\leq  \int_M \snorm{\nabla S}{}^{2\beta} d\mu
       + C\nu \int_M \snorm{\nabla S}{}^{2\beta-1} d\mu
       + C\int_M \snorm{\nabla \nabla S}{}
       \snorm{\nabla S}{}^{2\beta-1} d\mu\\
     &\qquad + C (\beta-1) \nu \int_M \snorm{Ric}{} \snorm{S}{} \snorm{\nabla S}{}^{2\beta-1} d\mu
       + C(\beta-1) \int_M \snorm{\nabla \nabla S}{} \snorm{\nabla
       S}{}^{2\beta-1}.
  \end{align*}
  Therefore, for some constant $C$ (It may depends on $\nu$), we have
  \begin{align*}
     I \leq C \int_M (\snorm{\nabla S}{}^{2\beta-1} + \snorm{\nabla S}{}^{2\beta}) d\mu
     + \beta C \int_M (\nu \snorm{Ric}{} \snorm{S}{}
         + \snorm{\nabla \nabla S}{}) \snorm{\nabla S}{}^{2\beta-1} d\mu.
  \end{align*}

  Direct calculation shows
  \begin{align*}
   II &= \nu \int_M \{-R \snorm{\nabla S}{}^{2\beta} + \nu \snorm{S}{}^2 R^2 \snorm{\nabla S}{}^{2(\beta-1)}
    \}d\mu\\
    &\quad - \nu\int_M  RS\bar{S}_{\bar{i}} (\beta-1) \snorm{\nabla S}{}^{2(\beta-2)}(-\nu
    \bar{S}  R_{i\bar{l}}S_l   + \bar{S}_{\bar{l}} S_{,li}) d\mu\\
    &\quad + \nu \int_M \{ -R\snorm{\nabla S}{}^{2\beta} + \nu \snorm{S}{}^2 R^2 \snorm{\nabla
    S}{}^{2(\beta-1)}\}du\\
    &\quad - \nu \int_M R\bar{S} S_i (\beta-1) \snorm{\nabla S}{}^{2(\beta-2)}(-\nu S R_{l\bar{i}}\bar{S}_{\bar{l}}
      + S_l \bar{S}_{,\bar{l} \bar{i}})d\mu\\
    &=-2\nu \int_M R \snorm{\nabla S}{}^{2\beta} d\mu + 2\nu^2 \int_M
     \snorm{S}{}^2R^2 \snorm{\nabla S}{}^{2(\beta-1)} d\mu\\
    & \quad + 2(\beta-1)\nu^2 \int_M R \snorm{S}{}^2 R_{i\bar{l}}S_l
     \bar{S}_{\bar{i}} \snorm{\nabla S}{}^{2(\beta-2)} d\mu\\
    &\quad
     -(\beta-1)\nu \int_M \{S_{,li}\bar{S}_{\bar{i}}
     \bar{S}_{\bar{l}}S
      + \bar{S}_{,\bar{l}\bar{i}} S_l S_i \bar{S} \} R \snorm{\nabla
      S}{}^{2(\beta-2)} d\mu\\
    &\leq C \int_M \{\snorm{\nabla S}{}^{2\beta} + \snorm{\nabla S}{}^{2(\beta-1)} \}d\mu
      + \beta C \int_M  (\nu \snorm{Ric}{} \snorm{S}{} +
      \snorm{\nabla \nabla S}{}) \snorm{\nabla S}{}^{2(\beta-1)} d\mu.
  \end{align*}
 Combining this estimate with the estimate of $I$ we have
 \begin{align}
 &\qquad \qquad \frac{4(\beta-1)}{\beta^2} \int_M \left|  \nabla \snorm{\nabla S}{}^{\beta}\right|^2
     d\mu \notag \\
   & \leq  -\int_M (\nu^2 \snorm{Ric}{}^2\snorm{S}{}^2 +\snorm{\nabla \nabla S}{}^2)\snorm{\nabla
      S}{}^{2(\beta-1)} d\mu \notag \\
   &\qquad +C \int_M \{\snorm{\nabla S}{}^{2\beta} + \snorm{\nabla S}{}^{2(\beta-1)}
   \}d\mu \notag \\
   &\qquad + \beta C \int_M  (\nu \snorm{Ric}{} \snorm{S}{}
     + \snorm{\nabla \nabla S}{})\{ \snorm{\nabla S}{}^{2(\beta-1)} + \snorm{\nabla S}{}^{2\beta-1}
     \}d\mu \label{eqn: bamean}
 \end{align}
 Since $\beta C \nu \snorm{Ric}{} \snorm{S}{} \snorm{\nabla S}{}^{2(\beta-1)}
 =(\nu \snorm{Ric}{} \snorm{S}{} \snorm{\nabla S}{}^{(\beta-1)}) \cdot (\beta C \snorm{\nabla
 S}{}^{(\beta-1)})$,  we see
 \begin{align*}
   \int_M \beta C \nu \snorm{Ric}{} \snorm{S}{} \snorm{\nabla
   S}{}^{2(\beta-1)} d\mu \leq \int_M \frac12 \nu^2 \snorm{Ric}{}^2
   \snorm{S}{}^2 \snorm{\nabla S}{}^{2(\beta-1)} d\mu +  \int_M  \frac12 (\beta C)^2 \snorm{\nabla
   S}{}^{2(\beta-1)}
   d\mu
 \end{align*}
 Similar deduction yields
 \begin{align*}
  &\qquad \beta C \int_M  (\nu \snorm{Ric}{} \snorm{S}{}
     + \snorm{\nabla \nabla S}{})\{ \snorm{\nabla S}{}^{2(\beta-1)} + \snorm{\nabla S}{}^{2\beta-1}
     \}d\mu\\
  \leq& \int_M (\nu^2 \snorm{Ric}{}^2\snorm{S}{}^2
  +\snorm{\nabla \nabla S}{}^2)\snorm{\nabla S}{}^{2(\beta-1)} d\mu +
  \beta^2 C^2  \int_M \{ \snorm{\nabla S}{}^{2(\beta-1)} +
   \snorm{\nabla S}{}^{2\beta}\}d\mu.
 \end{align*}
 By adjusting constant $C$, it follows from (\ref{eqn: bamean}) that
 \begin{align*}
  \frac{4(\beta-1)}{\beta^2} \int_M \left|  \nabla \snorm{\nabla S}{}^{\beta}\right|^2
     d\mu \leq  \beta^2 C^2  \int_M \{ \snorm{\nabla S}{}^{2(\beta-1)} +
   \snorm{\nabla S}{}^{2\beta}\}d\mu.
 \end{align*}
 If $\beta \geq \frac{n}{n-1}$, we have
 \begin{align*}
   \int_M \left|  \nabla \snorm{\nabla S}{}^{\beta}\right|^2 d\mu
   \leq (C\beta)^3 \int_M \{ \snorm{\nabla S}{}^{2(\beta-1)} + \snorm{\nabla S}{}^{2\beta}\}d\mu.
 \end{align*}
 Sobolev inequality tells us that
 \begin{align}
    ( \int_M \snorm{\nabla S}{}^{\beta \cdot \frac{2n}{n-1}})^{\frac{n-1}{n}}
    &\leq C_S\{ \int_M \snorm{\nabla S}{}^{2\beta} d\mu + \int_M \left|  \nabla \snorm{\nabla S}{}^{\beta}\right|^2 d\mu
    \} \notag\\
    & \leq (2C\beta)^3 \int_M \{ \snorm{\nabla S}{}^{2(\beta-1)} + \snorm{\nabla S}{}^{2\beta}\}d\mu.
    \label{eqn: iterbase}
 \end{align}
 From this inequality and the fact $\norm{|\nabla S|}{L^{\frac{2n}{n-1}}}$ is uniformly bounded,
 standard Moser iteration technique tells us
 $\norm{|\nabla S|}{L^{\infty}}< A_1$ for some uniform constant $A_1$.

 \end{proof}

\subsection{Convergence of Plurianticanonical Holomorphic Sections}

 In this subsection we use $L^2$-estimate for
 $\bar{\partial}$-operator  to study the convergence of
 plurianticanonical bundles.
  This section is very  similar to Section 5 of  Tian's paper~\cite{Tian90}.
  For the readers' and ourselves' convenience, we
  write down the arguments in detail.

  First let's list the important $\bar{\partial}$-lemma without
  proof.
  \begin{proposition}[c.f.\cite{Tian90}, Proposition 5.1.]
      Suppose $(M^n, g, J)$ is a complete K\"ahler manifold, $\omega$ is metric form compatible with
      $g$ and $J$, $L$ is a line bundle on $M$ with the hermitian metric $h$, and  $\psi$ is a
      smooth function on $M$. If
      \begin{align*}
         Ric(h) + Ric(g) + \st \ddb \psi \geq c_0  \omega
      \end{align*}
      for some uniform positive number $c_0$ at every point.   Then for any
      smooth $L$-valued $(0,1)$-form $v$ on $M$ with $\bar{\partial} v
      =0$ and $\int_M \snorm{v}{}^2 d\mu_g$ finite, there exists a
      smooth $L$-valued function $u$ on $M$ such that $\bar{\partial}
      u=v$ and
      \begin{align*}
          \int_M \snorm{u}{}^2 e^{-\psi} d\mu_g \leq \frac{1}{c_0}
          \int_M \snorm{v}{}^2 e^{-\psi} d\mu_g
      \end{align*}
      where $\snorm{\cdot}{}$ is the norm induced by $h$ and $g$.
  \label{proposition: dbp}
  \end{proposition}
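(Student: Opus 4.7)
The plan is to prove this via the classical Hörmander $L^2$-method, reducing existence of a solution to an a priori inequality obtained from the Bochner-Kodaira-Nakano identity with a weighted metric.

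First I would absorb the weight $\psi$ into the bundle metric: replace $h$ by $\tilde{h} = h\,e^{-\psi}$ on $L$, so that $Ric(\tilde{h}) = Ric(h) + \st \ddb \psi$ and the curvature hypothesis becomes the cleaner statement $Ric(\tilde{h}) + Ric(g) \geq c_0\,\omega$. Equivalently, the weighted inner product $\langle\cdot,\cdot\rangle_\psi = \int_M \langle\cdot,\cdot\rangle\,e^{-\psi}\,d\mu_g$ replaces the unweighted one; it suffices to produce $u$ with $\bar\partial u = v$ and $\|u\|_\psi^2 \leq c_0^{-1}\|v\|_\psi^2$. Under this reformulation, the canonical isomorphism $\Omega^{0,1}(L) \cong \Omega^{n,1}(L\otimes K_M)$ puts us in the setting of the Bochner-Kodaira-Nakano identity for an $L\otimes K_M$-valued $(n,1)$-form, whose bundle curvature is precisely $Ric(\tilde{h}) + Ric(g)$ (the sign of $Ric(g)$ coming from the canonical bundle factor).

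The next step is the a priori inequality. For any smooth, compactly supported $L$-valued $(0,1)$-form $\alpha$, the Bochner-Kodaira-Nakano identity, combined with the Kähler identities $[\bar\partial,\Lambda] = -\st\,\partial^*$, gives
\begin{align*}
\|\bar\partial\alpha\|_\psi^2 + \|\bar\partial^*\alpha\|_\psi^2 \;\geq\; \left\langle [\st\Theta(L\otimes K_M,\tilde h\otimes g^*),\Lambda]\alpha,\alpha\right\rangle_\psi \;\geq\; c_0\,\|\alpha\|_\psi^2,
\end{align*}
where the last inequality uses the curvature hypothesis and the identity $\langle [\st\omega,\Lambda]\alpha,\alpha\rangle = |\alpha|^2$ on $(n,1)$-forms. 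The completeness of $(M,g)$ enters here: it guarantees that compactly supported smooth forms are dense in the graph norm of $\bar\partial \oplus \bar\partial^*$ in $L^2_\psi$, so the inequality extends to all $\alpha$ in $\mathrm{Dom}(\bar\partial)\cap\mathrm{Dom}(\bar\partial^*)$.

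With the a priori estimate in hand, the construction of $u$ is a standard functional-analytic argument. On the subspace $\bar\partial^*(\mathrm{Dom}(\bar\partial^*))\subset L^2_\psi(L)$ define the linear functional $\ell(\bar\partial^*\alpha) = \langle \alpha, v\rangle_\psi$. Using $\bar\partial v = 0$ one first checks $\ell$ is well-defined (depends only on $\bar\partial^*\alpha$ via the Hodge-type decomposition using the estimate), and then $|\ell(\bar\partial^*\alpha)|^2 \leq \|\alpha\|_\psi^2\|v\|_\psi^2 \leq c_0^{-1}\|\bar\partial^*\alpha\|_\psi^2\|v\|_\psi^2$ after splitting $\alpha = \alpha_1 + \alpha_2$ into $\ker\bar\partial$ and its orthogonal complement. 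Hahn-Banach extends $\ell$ to all of $L^2_\psi(L)$ with the same bound, and Riesz representation supplies $u\in L^2_\psi(L)$ with $\langle u,\bar\partial^*\alpha\rangle_\psi = \langle \alpha,v\rangle_\psi$ and $\|u\|_\psi^2 \leq c_0^{-1}\|v\|_\psi^2$; this is the weak equation $\bar\partial u = v$. Elliptic regularity for $\bar\partial$ (applied to $v$ smooth) upgrades $u$ to a smooth section.

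The main obstacle is the Bochner-Kodaira-Nakano step combined with the completeness argument: one must justify that integration by parts produces no boundary terms and that compactly supported forms are dense in the graph norm, which is precisely where completeness of $(M,g)$ is used (via a cut-off function whose gradient goes to zero, constructed from the distance function as in Andreotti-Vesentini). Once this technical point is handled, the rest of the proof is formal.
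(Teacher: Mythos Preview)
The paper does not give a proof of this proposition at all: it is stated explicitly as a result quoted without proof (``First let's list the important $\bar{\partial}$-lemma without proof''), with a reference to \cite{Tian90}, Proposition~5.1. So there is no proof in the paper to compare against.

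That said, your sketch is the standard and correct H\"ormander $L^2$ argument. Absorbing $e^{-\psi}$ into the hermitian metric, applying the Bochner--Kodaira--Nakano identity on $(n,1)$-forms with values in $L\otimes K_M$ to obtain the a priori inequality $\|\bar\partial\alpha\|_\psi^2 + \|\bar\partial^*\alpha\|_\psi^2 \geq c_0\|\alpha\|_\psi^2$, invoking completeness (Andreotti--Vesentini) to extend from compactly supported forms to the full domain, and then running the Riesz/Hahn--Banach duality argument is exactly how the result is proved in the references the paper points to. One small quibble: the hypothesis as stated only assumes $\int_M |v|^2\,d\mu_g < \infty$, not $\int_M |v|^2 e^{-\psi}\,d\mu_g < \infty$, so strictly speaking you should note that the conclusion is vacuous unless the latter is also finite (and in the applications in the paper $\psi$ is bounded, so the two conditions coincide). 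Otherwise your outline is sound.
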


 In our application, we fix $M$ to be a Fano manifold,
 $L=K_M^{-\nu}$ for some integer $\nu$.

 This Proposition assures that the plurigenera  is a continuous
 function in a proper moduli space of complex varieties under
 Cheeger-Gromov topology.

  \begin{theorem}
    $(M_i, g_i, J_i)$ is a sequence of Fano manifolds satisfying
   \begin{enumerate}
   \item[(a).]  There is an a priori constant $\mathcal{B}$ such that
    \begin{align*}
        C_S((M_i, g_i)) + \norm{R_{g_i}}{C^0(M_i)} + \norm{u_i}{C^0(M_i)}  <
        \mathcal{B}.
    \end{align*}
    Here $C_S((M_i, g_i))$ is the Sobolev constant of $(M_i, g_i)$,
    $R_{g_i}$ is the scalar curvature, $-u_i$ is the normalized Ricci
    potential. In other words, it satisfies
   \begin{align*}
     Ric_{g_i}- \omega_{g_i} =-\st \partial \bar{\partial}u_i,  \quad \frac{1}{V_{g_i}} \int_{M_i} e^{-u_i} d\mu_{g_i}=1.
   \end{align*}

   \item[(b).]  There is a constant $K$ such that $K^{-1}r^{2n} \leq \Vol(B(x, r)) \leq Kr^{2n}$
   for every geodesic ball $B(x, r) \subset M_i$ satisfying $r \leq 1$.

   \item[(c).]  $\displaystyle  (M_i, g_i, J_i) \sconv (\hat{M}, \hat{g},
   \hat{J})$ where $(\hat{M}, \hat{g}, \hat{J})$ is a Q-Fano normal variety.
   \end{enumerate}
     Then for any fixed positive integer $\nu$,  we have
  \begin{enumerate}
   \item  If $S_i \in H^0(M_i, K_{M_i}^{-\nu})$ and $\int_{M_i} \snorm{S_i}{}^2
   d\mu_{g_i}=1$, then by taking subsequence if necessary, we have
   $\hat{S} \in H^0(\hat{M}, K_{\hat{M}}^{-\nu})$ such that
   \begin{align*}
      S_i \sconv \hat{S},
      \quad \int_{\hat{M}} \snorm{\hat{S}}{}^2 d\mu_{\hat{g}}=1.
   \end{align*}

   \item  If $\hat{S} \in H^0(\hat{M}, K_{\hat{M}}^{-\nu})$ and $\int_{\hat{M}} \snorm{\hat{S}}{}^2
   d\mu_{\hat{g}}=1$, then there is a subsequence  of holomorphic
   sections $S_i \in H^0(M_i, K_{M_i}^{-\nu})$ and $\int_{M_i} \snorm{S_i}{}^2
   d\mu_{g_i}=1$ such that $S_i \sconv \hat{S}$.
  \end{enumerate}
  \label{theorem: bundleconv}
  \end{theorem}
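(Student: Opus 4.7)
The plan is to combine the uniform pointwise bounds for unit-norm plurianticanonical sections (Lemma~\ref{lemma: sectionbound} and Lemma~\ref{lemma: gsbound}) with H\"ormander's $L^2$-estimate (Proposition~\ref{proposition: dbp}) and a cutoff construction near the singular set $\Sigma \subset \hat M$. Throughout I use crucially that, since $\hat M$ is a normal $Q$-Fano variety, $\Sigma$ has complex codimension at least two, and that the Cheeger-Gromov convergence in (c) provides diffeomorphisms $\Phi_i : U \to U_i \subset M_i$ from any precompact open set $U \Subset \hat M \setminus \Sigma$ such that $\Phi_i^* g_i \sconv \hat g$ and $\Phi_i^* J_i \sconv \hat J$.

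For part (1), I would first upgrade the $C^0$ and $C^1$ bounds of Lemmas~\ref{lemma: sectionbound}, \ref{lemma: gsbound} to $C^k$ bounds on compact subsets of $\hat M \setminus \Sigma$. This is done by bootstrapping through the identity $S_{,i\bar j} = -\nu S R_{i\bar j}$ established in Claim~\ref{claim: hessians}: since $R_{i\bar j} = g_{i\bar j} - \dot\varphi_{i\bar j}$ and the metrics $\Phi_i^* g_i$ converge smoothly on $U$, one obtains uniform local $C^k$ bounds for $\Phi_i^* S_i$ on $U$ by standard elliptic regularity for the Laplace equation satisfied by $|S_i|^2$ combined with $\bar\partial S_i = 0$. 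Arzel\`a-Ascoli then yields a subsequential smooth limit $\hat S$ on $\hat M \setminus \Sigma$, which is holomorphic. Because $|\hat S|_{\hat h^\nu}$ is locally bounded and $\Sigma$ is codimension $\geq 2$ in the normal variety $\hat M$, Hartogs-type extension promotes $\hat S$ to a section of $K_{\hat M}^{-\nu}$. The $L^2$ norm is preserved: for any $\epsilon$-tube $N_\epsilon(\Sigma)$ the uniform bound $|S_i| \leq A_0 \nu^{n/2}$ and the volume upper bound in (b) give $\int_{\Phi_i(N_\epsilon(\Sigma))} |S_i|^2 d\mu_{g_i} \to 0$ as $\epsilon \to 0$ uniformly in $i$, while on the complement smooth convergence handles the integral.

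For part (2), given $\hat S$ I would construct approximants by Tian's cutoff-plus-H\"ormander trick. Choose logarithmic cutoff functions $\chi_\epsilon$ on $\hat M$ with $\chi_\epsilon \equiv 1$ off $N_{2\epsilon}(\Sigma)$, $\chi_\epsilon \equiv 0$ on $N_\epsilon(\Sigma)$, and $\int_{\hat M} |\bar\partial \chi_\epsilon|^2 d\mu_{\hat g} \to 0$ as $\epsilon \to 0$ (available because $\Sigma$ has codimension $\geq 2$). Push forward $\chi_\epsilon \hat S$ by $\Phi_i$ and extend by zero to obtain a smooth section $\tilde S_i$ of $K_{M_i}^{-\nu}$ with $\|\bar\partial \tilde S_i\|_{L^2(M_i)}$ as small as desired for $i$ large. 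Now apply Proposition~\ref{proposition: dbp} with $L = K_{M_i}^{-\nu}$ and $\psi \equiv 0$: the natural hermitian metric $h_{t}^\nu$ has curvature $\nu\, \text{Ric}(g_i)$, so $\text{Ric}(h) + \text{Ric}(g) = (\nu+1)\text{Ric}(g_i) = (\nu+1)(\omega_{g_i} - \st\ddb u_i)$, and boundedness of $u_i$ combined with $\nu \geq 2$ lets us absorb the $\ddb u_i$ term by modifying $\psi$, producing a uniform positive lower bound $c_0\, \omega_{g_i}$. This yields $u_i$ with $\bar\partial u_i = \bar\partial \tilde S_i$ and $\|u_i\|_{L^2} \to 0$. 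Then $S_i := \tilde S_i - u_i$ is holomorphic; Moser iteration as in the proof of Lemma~\ref{lemma: sectionbound} (applied to the holomorphic section $u_i$, which satisfies $\triangle |u_i|^2 \geq -\nu R |u_i|^2$) upgrades $L^2$-smallness to $C^0$-smallness, and bootstrapping through Claim~\ref{claim: hessians} gives smooth convergence of $\Phi_i^* S_i$ to $\hat S$ on compacta of $\hat M \setminus \Sigma$. A final renormalization ensures $\int_{M_i}|S_i|^2 d\mu_{g_i} = 1$.

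The main obstacle is the delicate interplay between the cutoff scale $\epsilon$ and the index $i$ in part (2): the H\"ormander step only controls $u_i$ in $L^2$ on $M_i$, so I need the uniform Sobolev constant from Proposition~\ref{proposition: sobolev} together with the volume noncollapsing in (b) to run Moser iteration in a manner independent of the (possibly wild) geometry of $\Phi_i(N_\epsilon(\Sigma))$. The cleanest way is to choose $\epsilon = \epsilon_i \to 0$ slowly enough that both $\|\bar\partial \tilde S_i\|_{L^2}$ and the volume of the bad region tend to zero, while the curvature lower bound in H\"ormander's estimate stays uniform; this is precisely where the assumption that the limit is a \emph{normal} variety (codimension $\geq 2$ singular set) is essential.
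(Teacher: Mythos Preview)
Your overall architecture matches the paper's: Part~(1) via uniform $C^0$ bounds plus compactness and Hartogs extension, Part~(2) via cutoff plus H\"ormander plus Moser iteration on the correction. But there is one genuine gap and two smaller confusions.

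\textbf{The gap (Part 2).} You write that Moser iteration is ``applied to the holomorphic section $u_i$, which satisfies $\triangle|u_i|^2 \geq -\nu R|u_i|^2$''. But $u_i$ is the H\"ormander correction solving $\bar\partial u_i = \bar\partial \tilde S_i$; it is \emph{not} holomorphic (if it were, $\tilde S_i$ would already be holomorphic and there would be nothing to correct). Consequently the clean inequality $\triangle|u_i|^2 \geq -\nu R|u_i|^2$ is simply false for $u_i$. The paper computes instead, for the correction $W$,
\[
\triangle|W|^2 \;\geq\; |\nabla W|^2 + |\bar\nabla W|^2 - (R+1)|W|^2 - |\bar\partial^*\bar\partial V|^2,
\]
which carries the inhomogeneous term $|\bar\partial^*\bar\partial V|^2$. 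This term is only small on $\phi_i(U_{r/2})$ (away from the cutoff annulus it involves second derivatives of the cutoff, which blow up as the scale shrinks), so the paper performs a \emph{local} Moser iteration on $\phi_i(U_{r/2})$ to conclude $\|\,|W|^2\,\|_{C^0(\phi_i(U_r))} \leq C(r)\delta^2$. Your global iteration, as written, does not go through; you need this local argument with the correct inequality.

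\textbf{Two smaller points.} In Part~(1) you invoke $R_{i\bar j} = g_{i\bar j} - \dot\varphi_{i\bar j}$; there is no $\dot\varphi$ in the hypotheses of this theorem (it is stated for a sequence of Fano manifolds, not along a flow). More importantly, the bootstrapping via Claim~\ref{claim: hessians} is unnecessary: since $S_i$ is holomorphic with respect to $(\phi_i^{-1})_* J_i (\phi_i)_*$ and these complex structures converge smoothly on $U_\delta$, Cauchy's integral formula already gives all higher derivative bounds from the $C^0$ bound. This is what the paper does. Finally, the restriction ``$\nu\geq 2$'' in your H\"ormander step is not needed: for any $\nu\geq 1$ one has $\mathrm{Ric}(h^\nu)+\mathrm{Ric}(g)=(\nu+1)\mathrm{Ric}(g)$, and choosing $\psi$ to be a bounded multiple of $u_i$ yields $(\nu+1)\omega_{g_i}$ on the nose.
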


  \begin{proof}

  For simplicity, we let $\nu=1$. Let $\mathcal{P}$ be the singular
  set of $\hat{M}$. As $\hat{M}$ is normal variety, Hausdorff
  dimension of $\mathcal{P}$ is not greater than $2n-4$. In virtue of condition (b) and (c), volume
  converges as $M_i$ converge to  $\hat{M}$.  Consequently,
  $K^{-1}r^{2n} \leq \Vol(B(x, r)) \leq K r^{2n}$ holds for every
  geodesic ball $B(x, r) \subset \hat{M}$ satisfying $r \leq 1$.
  Therefore, by the fact that $\dim(\mathcal{P}) \leq 2n-4$,  the
  Hausdorff dimension definition and packing ball method implies that there
  is a constant $\mathcal{V}$ such that $\Vol(B(\mathcal{P}, r)) \leq \mathcal{V} r^4$
  whenever $r$ is small.
  Now we prove part 1 and part 2 respectively. \\

  \textit{Part1. ``$\Longrightarrow$"}

      According to the proof
  of Lemma~\ref{lemma: sectionbound}, we see there is an a priori
  bound $A_0$ such that  $\norm{\snorm{S_i}{}}{C^0(M_i)} < A_0$.

   Fix any small number $\delta$ and define $U_{\delta}= \hat{M} \backslash B(\mathcal{P}, \delta)$.
  By the  definition of smooth convergence,  there exists a sequence of
    diffeomorphisms $\phi_i :  U_{\delta} \to \phi_i(U_{\delta}) \subset M_i$
  satisfying the following properties
  \begin{enumerate}
  \item[(1)]  $\phi_i^* g_i  \sconv \hat{g}$ uniformly on $U_{\delta}$;
  \item[(2)]  $(\phi_i^{-1})_* \circ J_i \circ (\phi_i)_*  \sconv
  \hat{J}$ uniformly on $U_{\delta}$.
  \end{enumerate}
  For convenience, define $(\phi_i)^* S_i \triangleq (\phi_i^{-1})_{*} S_i$.
  Clearly, $((\phi_i)^* S_i)|_{U_{\delta}}$ is a section of
   $(T^{(1,0)}\hat{M} \oplus  T^{(0,1)}\hat{M})|_{U_{\delta}}$ where $T^{(1,0)}\hat{M}$ and $T^{(0,1)}\hat{M}$
  are divided by the complex structure $\hat{J}$.
   Note that $\norm{\snorm{((\phi_i)^* S_i)|_{U_{\delta}} }{}}{C^0(U_{\delta})} <
   A_0$ and  $((\phi_i)^* S_i)|_{U_{\delta}}$ is holomorphic under the complex structure
  $(\phi_i^{-1})_* \circ J_i \circ (\phi_i)_*$. By Cauchy's
  integration formula, all covariant derivatives of $((\phi_i^{-1})_*
  S_i)|_{U_{\delta}}$ with respect to $(\phi_i)^* g_i$ are uniformly
  bounded in the domain $U_{2\delta}$.   Therefore there must exist a
  limit section $\hat{S}_{2\delta} \in (T^{(1,0)}\hat{M} \oplus  T^{(0,1)}\hat{M})|_{U_{2 \delta}} $
  and
  $ \displaystyle     (\phi_i)^* S_i \sconv \hat{S}_{2\delta} \quad \textrm{on}
     \; U_{2\delta}$.
  This section $\hat{S}_{2\delta}$ is automatically holomorphic with
  respect to $\hat{J}$ since $(\phi_i^{-1})_* \circ J_i \circ (\phi_i)_*  \sconv
  \hat{J}$ on $U_{2\delta} \subset U_{\delta}$.

    As $(M_i, g_i, J_i) \sconv (\hat{M}, \hat{g}, \hat{J})$, we have
    $\displaystyle \lim_{i \to \infty} V_{g_i}(M_i \backslash \phi_i(U_{2\delta}))<2 \mathcal{V}(2\delta)^4
     =32 \mathcal{V}\delta^4$.
  It follows that
    \begin{align*}
     1 \geq   \int_{U_{2\delta}} \snorm{(\phi_i)^* S_i}{}^2 d\mu_{\phi_i^* g_i}
     &= \int_{\phi_i(U_{2\delta})} \snorm{S_i}{}^2
        d\mu_{g_i}\\
    &=\int_{M_i} \snorm{S_i}{}^2 d\mu_{g_i}
        -\int_{M_i \backslash \phi_i(U_{2\delta})} \snorm{S_i}{}^2
        d\mu_{g_i}\\
    &> 1- 32A_0^2 \mathcal{V} \delta^4.
    \end{align*}
  Therefore, for each $\delta$, there is a limit holomorphic
  section $\hat{S}_{2\delta} \in H^0(\hat{U}_{2\delta},
  K_{U_{2\delta}}^{-1})$ satisfying
  \begin{align*}
     \norm{\snorm{\hat{S}_{2\delta}}{}}{C^0(U_{2\delta})} \leq
     A_0, \quad
     1 \geq \int_{U_{2\delta}} \snorm{\hat{S}_{2\delta}}{}^2 d\mu_{\hat{g}} \geq 1 -
     32A_0^2 \mathcal{V} \delta^4.
  \end{align*}

 Let $\delta= \delta_k = 2^{-k} \to 0$ and then take diagonal
 sequence, we obtain a subsequence of  sections $(\phi_{i_k}^{-1})_*
 S_{i_k}|_{U_{2\delta_k}}$ satisfying
 \begin{align*}
   (\phi_{i_k}^{-1})_* S_{i_k}|_{K} \sconv \hat{S}|_{K}, \quad
   \forall \; \textrm{compact set}  \; K \subset
   \hat{M} \backslash \mathcal{P}.
 \end{align*}
 This exactly means that $(\phi_{i_k}^{-1})_* S_{i_k} \sconv \hat{S}$
 on $\hat{M} \backslash \mathcal{P}$.  As $\hat{M}$ is a $Q$-Fano normal variety,
 $\hat{S}$ can be naturally extended to a holomorphic section of
 $H^0(K_{\hat{M}}^{-1})$. Moreover, we have
 \begin{align*}
   \int_{\hat{M}} \snorm{\hat{S}}{}^2 d\mu_{}
   = \int_{\hat{M} \backslash \mathcal{P}} \snorm{\hat{S}}{}^2 d\mu_{}
   =1,
 \end{align*}
 where the metric on $K_{\hat{M}}^{-\nu}$ is naturally $(\det \hat{g})^{\nu}$.
 So we finish the proof of part 1.\\

  \textit{Part2. ``$\Longleftarrow$"}

   Fix two small positive numbers $r, \delta$ satisfying $r \gg 2\delta$.
    Define  function $\eta_{\delta}$ to be a cutoff function taking
  value $1$ on $U_{2\delta}$ and $0$ inside $B(\mathcal{P}, \delta)$.
   $\eta_{\delta}$ also satisfies $\snorm{\nabla \eta_{\delta}}{\hat{g}}<
  \frac{2}{\delta}$.

    Like before,  there exists a sequence of
  diffeomorphisms $\phi_i :  U_{\delta} \to \phi_i(U_{\delta}) \subset M_i$
  satisfying the following properties
  \begin{enumerate}
  \item[(1)]  $\phi_i^* g_i  \sconv \hat{g}$ uniformly on $U_{\delta}$;
  \item[(2)]  $(\phi_i^{-1})_* \circ J_i \circ (\phi_i)_*  \sconv
  \hat{J}$ uniformly on $U_{\delta}$.
  \end{enumerate}
   $\phi_{i*}(\eta_{\delta} \hat{S})$  can be looked as a smooth section of the  bundle
   $\Lambda^{n} (T^{(1,0)}M_i \oplus T^{(0,1)}M_i)$ by natural extension.
   Let $\pi_i$ be the projection from
   $\Lambda^{n} (T^{(1,0)}M_i  \oplus  T^{(0,1)}M_i)$ to $\Lambda^{n} T^{(1,0)}M_i$ and denote $V_{\delta, i} = \pi_i (\phi_{i*}(\eta_{\delta}
   \hat{S}))$.     The smooth
   convergence of complex structures implies that $V_{\delta,i}$ is an almost holomorphic section of $\Lambda^{n}
   T^{(1,0)}M_i$. In other words,
   \begin{align}
     \lim_{i \to \infty}  \sup_{\phi_i(U_{2\delta})} \snorm{\bar{\partial} V_{\delta, i}}{}
   = \lim_{i \to \infty}  \sup_{\phi_i(U_{2\delta})} \snorm{ \bar{\partial} (\pi_i
        (\phi_i^*(\eta_{\delta}\hat{S})))}{}  = 0.
   \label{eqn: bar0}
   \end{align}
   Here $\bar{\partial}$ is calculated under the complex structure $J_i$.

   Notice that $V(B(\mathcal{P}, \delta)) \leq \mathcal{V} \delta^4$ when $\delta$ small.
   Denote $\mathcal{A}= \norm{\snorm{\hat{S}}{}}{C^0(\hat{M})}$.
   Note that $\mathcal{A}$ depends on $\hat{M}$ and
   $\hat{S}$ itself.  We have

   \begin{align*}
    1 \geq
    \lim_{i \to \infty} \int_{M_i} \snorm{V_{\delta, i}}{}^2 d\mu_{g_i}
    = \lim_{i \to \infty} \int_{M_i} \snorm{\pi_i (\phi_{i*}(\eta_{\delta} \hat{S}))}{}^2 d\mu_{g_i}
    \geq 1 - 2\mathcal{A}^2 \mathcal{V}(2\delta)^4 = 1 - 32\mathcal{A}^2 \mathcal{V}
    \delta^4.
   \end{align*}
   Recall $V_{\delta, i}$ vanishes on $B(\mathcal{P}, \delta)$, so we have
   \begin{align*}
    \int_{M_i} \snorm{\bar{\partial} V_{\delta, i}}{}^2
     d\mu_{g_i}
    & = \int_{\phi_{i}(U_{2\delta})} \snorm{\bar{\partial} V_{\delta,i}}{}^2   d\mu_{g_i}
      +\int_{\phi_{i}(U_{\delta} \backslash U_{2\delta})} \snorm{\bar{\partial} V_{\delta, i} }{}^2
      d\mu_{g_i}.
   \end{align*}
   By virtue of inequality (\ref{eqn: bar0}) and the fact $\snorm{\nabla \eta_{\delta}}{\hat{g}} < \frac{2}{\delta}$,
   $\Vol(\phi_{i}(U_{\delta} \backslash U_{2\delta})) \leq 2\mathcal{V}
   (2\delta)^4$,  we obtain
   \begin{align*}
   \int_{M_i} \snorm{\bar{\partial} V_{\delta, i}}{}^2 d\mu_{g_i}
   \leq 1000\mathcal{A}^2 \mathcal{V} \delta^2
   \end{align*}
   for large $i$.

   Let $h_i$ be the  hermitian metric on $K_{M_i}^{-1}$ induced by
   $g_i$. Clearly, we have
   \begin{align*}
      Ric(h_i)  + Ric(g_i) + \st \ddb{(-2u_i)} = 2(Ric(g_i) - \st \ddb u_i) =   2 \omega_{g_i}.
   \end{align*}
   So we  are able to apply Proposition~\ref{proposition: dbp}
   and obtain a smooth section $W_{\delta, i}$ of $K_{M_i}^{-1}$ such that
   \begin{align}
   \left\{
   \begin{array}{ll}
   & \bar{\partial} W_{\delta, i} = \bar{\partial} V_{\delta, i}\\
   &  \int_{M_i} \snorm{W_{\delta, i}}{}^2  e^{2u_i} d\mu_{g_i}
   \leq \frac12 \int_{M_i} \snorm{\bar{\partial} V_{\delta, i}}{}^2 e^{2u_i}
   d\mu_{g_i} \leq \frac{e^{2\mathcal{B}}}{2} \int_{M_i} \snorm{\bar{\partial} V_{\delta, i}}{}^2
   d\mu_{g_i} < 500\mathcal{A}^2 \mathcal{V} e^{2\mathcal{B}} \delta^2.
   \end{array}
   \right.
   \label{eqn: WL2}
   \end{align}
   Triangle inequality implies
   \begin{align}
    1 +  \sqrt{500\mathcal{A}^2 \mathcal{V}e^{2\mathcal{B}} \delta^2} > (\int_{M_i} \snorm{V_{\delta, i} - W_{\delta, i}}{}^2 d\mu_{g_i})^{\frac12}
    >
  \sqrt{1 -32\mathcal{A}^2 \mathcal{V} \delta^4}-  \sqrt{500\mathcal{A}^2 \mathcal{V}e^{2\mathcal{B}} \delta^2}.
  \label{eqn: trinorm}
    \end{align}
  Therefore
  $\displaystyle
      S_{\delta, i} =
      \frac{V_{\delta, i} - W_{\delta, i}}{(\int_{M_i} \snorm{V_{\delta, i}
      - W_{\delta, i}}{}^2 d\mu_{g_i})^{\frac12}}  $
  is a well defined holomorphic section of  $K_{M_i}^{-1}$.

  Direct computation shows that $W_{\delta, i}$ satisfies the  elliptic equation:
    \begin{align}
       \triangle (\snorm{W_{\delta, i}}{}^2)& = \snorm{\nabla W_{\delta, i}}{}^2
           + |\bar{\nabla} W_{\delta, i}|^2 - R \snorm{W_{\delta,i}}{}^2 +
           2 Re <W_{\delta,i}, \bar{\partial}^* \bar{\partial}
           W_{\delta,i}>  \notag \\
       &=\snorm{\nabla W_{\delta, i}}{}^2
           + |\bar{\nabla} W_{\delta, i}|^2 - R \snorm{W_{\delta,i}}{}^2 +
           2 Re <W_{\delta,i}, \bar{\partial}^* \bar{\partial} V_{\delta,i}> \notag\\
      & \geq \snorm{\nabla W_{\delta, i}}{}^2
           + |\bar{\nabla} W_{\delta, i}|^2-(R+1) \snorm{W_{\delta,i}}{}^2
       -\snorm{\bar{\partial}^* \bar{\partial}V_{\delta,i}}{}^2 \notag\\
      & \geq \snorm{\nabla W_{\delta, i}}{}^2
           + |\bar{\nabla} W_{\delta, i}|^2-2\mathcal{B} \{ \snorm{W_{\delta,i}}{}^2 +
      \frac{1}{2\mathcal{B}} \snorm{\bar{\partial}^*
      \bar{\partial}V_{\delta,i}}{}^2\}.
      \label{eqn: wine}
    \end{align}
  All geometric quantities are computed under the metric $g_i$ and complex
    structure $J_i$.  Let $f=\snorm{W_{\delta, i}}{}^2 + \frac{1}{2\mathcal{B}} \sup_{\varphi_i(U_{\frac{r}{2}})}\snorm{\bar{\partial}^*
    \bar{\partial}V_{\delta,i}}{}^2$, on $\varphi_i(U_{\frac{r}{2}})$, we have
    \begin{align*}
     \triangle f  \geq -2\mathcal{B} f.
    \end{align*}
    Applying local Moser iteration in $\phi_i(U_{\frac{r}{2}})$, we obtain
    \begin{align*}
         \norm{f}{C^0(\varphi_i(U_{r}))}
         &\leq C'(r, \mathcal{B}, \mathcal{A})
         \norm{f}{L^{\frac{n}{n-1}}(\varphi_i(U_{\frac{r}{2}}))}\\
         &= C'(r, \mathcal{B}, \mathcal{A})
           \{\norm{\snorm{W_{\delta,i}}{}^2}{L^{\frac{n}{n-1}}(\varphi_i(U_{\frac{r}{2}}))}
           + \frac{1}{2\mathcal{B}} \sup_{\varphi_i(U_{\frac{r}{2}})}\snorm{\bar{\partial}^* \bar{\partial}V_{\delta,i}}{}^2
           \}.
    \end{align*}
    Since  $\displaystyle \sup_{\varphi_i(U_{\frac{r}{2}})}\snorm{\bar{\partial}^* \bar{\partial}V_{\delta,i}}{}^2$
    tends to $0$ uniformly,  it follows that
    \begin{align}
     \norm{\snorm{W_{\delta,i}}{}^2}{C^0(\varphi_i(U_{r}))}
     \leq C^{''}(r, \mathcal{B},
     \mathcal{A})\norm{\snorm{W_{\delta,i}}{}^2}{L^{\frac{n}{n-1}}(\varphi_i(U_{\frac{r}{2}}))}.
    \label{eqn: infp}
    \end{align}
    On the other hand,  inequality (\ref{eqn: wine}) can be written
    as
    \begin{align*}
     \snorm{\nabla W_{\delta, i}}{}^2 + |\bar{\nabla} W_{\delta, i}|^2
     \leq \triangle (\snorm{W_{\delta,i}}{}^2) + 2\mathcal{B}
     \snorm{W_{\delta,i}}{}^2 + \snorm{\bar{\partial}^*
     \bar{\partial}V_{\delta,i}}{}^2.
    \end{align*}
    Combining this inequality with Sobolev inequality, we can apply
    cutoff function on $\phi_i(U_{\frac{r}{4}} \backslash U_{\frac{r}{2}})$
    to obtain
    \begin{align*}
    \norm{\snorm{W_{\delta,i}}{}^2}{L^{\frac{n}{n-1}}(\varphi_i(U_{\frac{r}{2}}))}
    \leq
    C^{'''}(r, \mathcal{B}, \hat{M})\{\norm{\snorm{W_{\delta,i}}{}^2}{L^1
    (\varphi_i(U_{\frac{r}{4}}))}
    + \sup_{\varphi_i(U_{\frac{r}{4}})}\snorm{\bar{\partial}^* \bar{\partial}V_{\delta,i}}{}^2 \}.
    \end{align*}
  Together with inequality (\ref{eqn: infp}), the fact
  $\displaystyle \sup_{\varphi_i(U_{\frac{r}{4}})}\snorm{\bar{\partial}^* \bar{\partial}V_{\delta,i}}{}^2 \to 0$
  implies that
  \begin{align*}
     \norm{\snorm{W_{\delta,i}}{}^2}{C^0(\varphi_i(U_{r}))}
     &\leq C^{''''}(r, \mathcal{B}, \mathcal{A},
     \hat{M})\norm{\snorm{W_{\delta,i}}{}^2}{L^1(\varphi_i(U_{\frac{r}{4}}))}\\
     &\leq C^{''''}(r, \mathcal{B}, \mathcal{A},
     \hat{M})\norm{\snorm{W_{\delta,i}}{}^2}{L^1(M_i)}\\
     &\leq C(r, \mathcal{B}, \mathcal{A}, \mathcal{V}, \hat{M}) \delta^2.
  \end{align*}
  The last inequality follows from estimate (\ref{eqn: WL2}) and the
  fact $|u_i|<\mathcal{B}$.

   Fix $r, \delta$ and let $i \to \infty$, we have
   $ \displaystyle
      \lim_{i \to \infty} \varphi_i^{*}(S_{\delta, i})
       = \frac{\hat{S} + \hat{W}_r}{ \displaystyle \lim_{i \to \infty} ( \int_{M_i} \snorm{V_{\delta,i} - W_{\delta,
       i}}{}^2 d\mu_{g_i})^{\frac12}}
   $
   on domain $U_r$.  Here $\hat{W}_r$ is a holomorphic section of $H^0(U_r, K_{U_r}^{-1})$
   with $\norm{\snorm{\hat{W}_r}{}}{C^0(U_r)} \leq C \delta$.
   It follows from this and inequality (\ref{eqn: trinorm}) that
    $\displaystyle   \lim_{\delta \to 0} \lim_{i \to \infty} \varphi_i^*(S_{\delta, i}) = \hat{S}$
    on domain $U_r$.   Let $\delta_k = 2^{-k}$ and take diagonal sequence, we
   obtain  $ \displaystyle  \lim_{k \to \infty} \varphi_{i_k}^* (S_{2^{-k}, i_k})=\hat{S}$
    on $U_{r}$.
    Then let $r=2^{-l}$ and take diagonal sequence one
   more time, we obtain a sequence of holomorphic sections $S_l \triangleq  S_{2^{-k_l}, i_{k_l}}$
   such that
   \begin{align*}
       \lim_{l \to \infty} \varphi_l^*(S_l)
        = \hat{S}, \qquad \textrm{on}  \; \hat{M} \backslash \mathcal{P}.
   \end{align*}
   Since every $S_l$ is a
   holomorphic section (w.r.t $(\phi_l^{-1})_* \circ J_l \circ (\phi_l)_*$ ), Cauchy integration formula
   implies that this
   convergence is actually in $C^\infty$-topology.
 \end{proof}

 \subsection{Justification of Tamed Condition}

   In this section, we show when the \KRf is tamed.

   \begin{theorem}
   Suppose $\{(M^n, g(t)), 0 \leq t < \infty\}$ is a \KRf satisfying
   the following conditions.
   \begin{itemize}
   \item volume ratio bounded from above, i.e., there exists a
   constant $K$ such that
   \begin{align*}
   \Vol_{g(t)}(B_{g(t)}(x, r)) \leq Kr^{2n}
   \end{align*}
   for every geodesic ball $B_{g(t)}(x, r)$ satisfying $r \leq 1$.
   \item weak compactness, i.e., for every sequence $t_i \to \infty$, by
   passing to subsequence, we have
   \begin{align*}
      (M, g(t_i)) \sconv (\hat{M}, \hat{g}),
   \end{align*}
   where $(\hat{M}, \hat{g})$ is a Q-Fano normal variety.
   \end{itemize}
 Then this flow is tamed.
 \label{theorem: justtamed}
 \end{theorem}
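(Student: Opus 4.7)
The strategy is by contradiction: assume the flow is not tamed, extract a limit Q-Fano normal variety via the weak compactness hypothesis, and use its Q-Fano structure to produce a contradiction. The upper bound $F_\nu \leq B_0$ holds automatically from the uniform $C^0$-estimate on unit $L^2$-norm sections, $\snorm{S}{h_t^\nu} \leq A_0 \nu^{n/2}$ (Lemma~\ref{lemma: sectionbound} and Corollary~\ref{corollary: unifbound}), regardless of $\nu$; so the task is to produce a uniform \emph{lower} bound on $F_\nu$ for some single $\nu$.

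Suppose no $\nu$ tames the flow. Then for every $\nu$ making $K_M^{-\nu}$ very ample, $F_\nu$ is unbounded below, and smooth dependence in time on compact intervals forces the failure to occur as $t \to \infty$: there exist $t_i \to \infty$ and $x_i \in M$ with $\sum_\beta \snorm{S^{t_i}_{\nu,\beta}}{h_{t_i}^\nu}^2(x_i) \to 0$. Apply the weak compactness hypothesis and pass to a subsequence so that $(M, g(t_i)) \sconv (\hat{M}, \hat{g})$ with $\hat{M}$ a Q-Fano normal variety, and $x_i \to \hat{x} \in \hat{M}$. The bundle-convergence theorem (Theorem~\ref{theorem: bundleconv}), whose proof uses H\"ormander's $\bar\partial$-estimate together with the uniform Sobolev and Poincar\'e constants, the uniform section bounds from Lemmas~\ref{lemma: sectionbound}--\ref{lemma: gsbound}, and the smallness of volume near the Hausdorff codimension-$\geq 4$ singular locus $\mathcal{P} \subset \hat{M}$, upgrades the convergence to $C^\infty$-convergence of orthonormal bases of $H^0(K^{-\nu})$ off $\mathcal{P}$ and yields continuity of the pluri-anticanonical plurigenera. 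Together these give
\[
\inf_{\hat{M}} \sum_\beta \snorm{\hat{S}_{\nu,\beta}}{}^2
= \lim_{i \to \infty} \inf_M \sum_\beta \snorm{S^{t_i}_{\nu,\beta}}{h_{t_i}^\nu}^2
= 0,
\]
so $\hat{x}$ is a base point of $K_{\hat{M}}^{-\nu}$ on $\hat{M}$.

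Because $\hat{M}$ is Q-Fano normal, by definition there is $\nu_0 = \nu_0(\hat{M})$ such that $K_{\hat{M}}^{-\nu_0}$ is very ample; then $K_{\hat{M}}^{-k\nu_0}$ is base-point-free, and compactness of $\hat{M}$ gives $\inf_{\hat{M}} \sum_\beta \snorm{\hat{S}_{k\nu_0,\beta}}{}^2 > 0$ for every $k \geq 1$. Choosing $\nu$ as a multiple of $\nu_0$ that also makes $K_M^{-\nu}$ very ample on $M$ contradicts the displayed equality. The main obstacle is the logical circularity that $\nu_0$ depends on the limit $\hat{M}$, while $\hat{M}$ is extracted from a bad sequence for a pre-fixed $\nu$. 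The way to close the loop is to iterate: starting with any admissible $\nu$, extract $\hat{M}$, read off $\nu_0(\hat{M})$, and invoke the ``not tamed'' hypothesis at $\nu' = m \nu_0(\hat{M})$ to produce a new bad sequence converging to some $\hat{M}'$. Perelman's no-local-collapsing and diameter bounds (Proposition~\ref{proposition: perelman}) together with the weak compactness hypothesis confine the possible Q-Fano normal limits to a bounded family, so $\nu_0(\hat{M}^{(k)})$ cannot escape to infinity under iteration and a contradiction is forced after finitely many steps.
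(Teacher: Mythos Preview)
Your overall contradiction strategy matches the paper's, but the final step has a genuine gap. You correctly identify the circularity---$\nu_0(\hat M)$ depends on the limit $\hat M$, which depends on the $\nu$ you started with---and you propose to close it by iterating and appealing to a uniform bound on $\nu_0$ over all possible limits. That bound is not established: Gromov--Hausdorff precompactness of the set of limits (which is all Perelman's diameter and noncollapsing estimates give you) does \emph{not} imply that $\hat M \mapsto \nu_0(\hat M)$ is bounded. That would require an effective very-ampleness statement for families of Q-Fano normal varieties, a substantial algebro-geometric input you have not supplied. Without it, your iteration never terminates and no contradiction is reached; and even granting boundedness of $\nu_0$, you have not said what ``finitely many steps'' actually does.

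The paper sidesteps this entirely with a diagonal trick. Instead of fixing $\nu$, it sets $p_i = i!$ and, assuming failure of tameness, chooses for each $i$ a point $(x_i,t_i)$ with $F_{p_i}(x_i,t_i) < -p_i$. Passing to a \emph{single} subsequential limit $(\hat M,\hat g)$ yields one $\nu = \nu(\hat M)$ with $e^{\nu F_\nu} \geq c_0 > 0$ on $\hat M$; bundle convergence (Theorem~\ref{theorem: bundleconv}) then produces unit sections $S_\nu^{(t_i)} \in H^0(K_M^{-\nu})$ with $|S_\nu^{(t_i)}|^2(x_i) > c_0/2$ for large $i$. Since $\nu \mid i!$ once $i \geq \nu$, the tensor power $(S_\nu^{(t_i)})^{p_i/\nu}$ lies in $H^0(K_M^{-p_i})$; normalizing it and using the uniform bound $|S| \leq A_0\nu^{n/2}$ (Lemma~\ref{lemma: sectionbound}) gives $p_i F_{p_i}(x_i,t_i) \geq -Ck$ with $k = p_i/\nu$, contradicting $p_i F_{p_i}(x_i,t_i) < -p_i^2 = -k^2\nu^2$ for large $k$. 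The factorial guarantees divisibility by whatever $\nu$ the single limit produces, so no a priori bound on $\nu_0$ is ever needed.
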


   \begin{proof}
    Suppose this result is false. For every $p_i= i!$,  $F_{p_i}$ is
    an unbounded function on $M \times [0, \infty)$.  By
    Corollary~\ref{corollary: unifbound}, $F_{p_i}$ has no lower bound.  Therefore,
    there exists a point $(x_i, t_i)$ such that
    \begin{align}
      F_{p_i}(x_i, t_i) < -p_i.
      \label{eqn: fastdecrease}
    \end{align}
    By weak compactness, we can assume
    that
    \begin{align*}
       (M, g(t_i)) \sconv (\hat{M}, \hat{g}).
    \end{align*}
    Moreover, as $\hat{M}$ is a Q-Fano variety, we can assume
    $\displaystyle e^{\nu F_{\nu}(y)} =\sum_{\alpha=0}^{N_{\nu}} \snorm{S_{\nu, \alpha}(y)}{\hat{\omega}^{\nu}}^2>c_0$
    on $\hat{M}$. Applying Theorem~\ref{theorem: bundleconv}, we have
     \begin{align*}
        \lim_{i \to \infty} e^{\nu F_{\nu}(x_i, t_i)}> \frac12 c_0.
     \end{align*}
     It follows that there are holomorphic sections
     $S_{\nu}^{(t_i)} \in  H^0(K_M^{-\nu})$ satisfying
     \begin{align*}
      \int_M \snorm{S_{\nu}^{(t_i)}}{h_{t_i}^{\nu}}^2 \omega_{t_i}^n=1,
      \quad
      \snorm{S_{\nu}^{(t_i)}}{h_{t_i}^{\nu}}^2(x_i)= e^{\nu F_{\nu}(x_i, t_i)}> \frac12 c_0.
     \end{align*}
     According to Lemma~\ref{lemma: sectionbound}, we see there is a constant $C$
     depending only on this flow such that
      \begin{align*}
      \snorm{S_{\nu}^{(t_i)}}{h_{t_i}^{\nu}} < C \nu^{\frac{n}{2}}.
      \end{align*}
    So we have
    \begin{align*}
      A \triangleq \int_M \snorm{(S_{\nu}^{(t_i)})^k}{h_{t_i}^{k\nu}}^2 \omega_{t_i}^n
       < V C^{2k} \nu^{nk}.
    \end{align*}
    Therefore, $A^{-\frac12} (S_{\nu}^{(t_i)})^k$ are unit sections
    of $H^0(K_M^{-k\nu})$. It
    follows that
    \begin{align*}
     e^{k\nu F_{k\nu}(x_i, t_i)} \geq \snorm{A^{-\frac12}
      (S_{\nu}^{(t_i)})^k}{h_{t_i}^{k\nu}}^2(x_i)
      \geq V^{-1} C^{-2k}\nu^{-nk} \snorm{(S_{\nu}^{(t_i)})^k}{h_{t_i}^{k\nu}}^2(x_i)
      \geq V^{-1} C^{-2k}\nu^{-nk} (\frac{c_0}{2})^k.
    \end{align*}
    This implies that
    \begin{align*}
     k\nu \cdot F_{k\nu}(x_i, t_i) \geq -2k \log C - nk \log \nu + k \log(\frac{c_0}{2})
      -\log V
    \end{align*}
    for large $i$ (depending on $\nu$) and every $k$.   Let
    $k=\frac{p_i}{\nu}=\frac{i!}{\nu}$,
    by virtue of inequality (\ref{eqn: fastdecrease}), we have
    \begin{align*}
     -k^2 \nu^2 = -p_i^2 > p_i F_{p_i}(x_i, t_i) = k\nu \cdot F_{k\nu}(x_i, t_i)  \geq -2k \log C - nk \log \nu + k
     \log(\frac{c_0}{2}).
    \end{align*}
    However, this is impossible for large $k$!
  \end{proof}

  In Theorem 4.4 of~\cite{CW3}, we have proved the weak compactness
  property of \KRf on Fano surfaces, i.e., every sequence of
  evolving metrics of a \KRf solution on a Fano surface subconverges to
  a K\"ahler Ricci soliton orbifold in Cheeger-Gromov topology.
  Moreover, the volume ratio upper bound is proved as a lemma to
  prove weak compactness. As
  an application of this property, we obtain

 \begin{corollary}
   If $\{(M^2, g(t)), 0 \leq t < \infty\}$ is a \KRf on a Fano surface
   $M^2$,  then it is a tamed \KRf.
  \label{corollary: nucontrol}
 \end{corollary}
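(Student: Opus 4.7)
The plan is to verify the two hypotheses of Theorem~\ref{theorem: justtamed} using results already available for \KRf on Fano surfaces, and then invoke that theorem directly. Thus the argument is essentially a citation-and-assembly proof: there is no new analytic work to do here, only a careful check that the input to Theorem~\ref{theorem: justtamed} is in place.

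First, I would invoke Lemma~\ref{lemmain: weakcompactness} (the weak compactness theorem from~\cite{CW3}). It states precisely that along any two-dimensional \KRfc for every sequence $t_i \to \infty$ we may pass to a subsequence so that $(M, g(t_i)) \sconv (\hat{M}, \hat{g})$ with $(\hat{M}, \hat{g})$ a \Kahler Ricci soliton orbifold having only finitely many singularities. Since such an orbifold is in particular a Q-Fano normal variety, the weak compactness hypothesis of Theorem~\ref{theorem: justtamed} is satisfied with $n = 2$.

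Next, I would recall that in the same paper~\cite{CW3}, the uniform volume ratio upper bound
\[
   \Vol_{g(t)}(B_{g(t)}(x, r)) \leq K r^{2n} = K r^4
\]
for all geodesic balls of radius $r \leq 1$ is established as an intermediate step in the proof of weak compactness. Thus the first bullet of Theorem~\ref{theorem: justtamed} is also satisfied with a constant $K$ depending only on the flow.

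With both hypotheses verified, I would apply Theorem~\ref{theorem: justtamed} to conclude that the flow is tamed, i.e.\ there exists some positive integer $\nu$ such that $K_M^{-\nu}$ is very ample and $F_\nu$ is uniformly bounded on $M \times [0, \infty)$. There is no genuine obstacle in this corollary: it is a compilation step, and the only care needed is to confirm that the limit orbifolds supplied by Lemma~\ref{lemmain: weakcompactness} qualify as Q-Fano normal varieties in the sense required by Theorem~\ref{theorem: justtamed}, which follows from the fact that orbifold singularities are finite quotient singularities and hence normal, with the plurianticanonical sheaf extending as a very ample line bundle for large $\nu$.
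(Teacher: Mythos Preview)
Your proposal is correct and follows essentially the same route as the paper: verify the two hypotheses of Theorem~\ref{theorem: justtamed} by citing the weak compactness and volume ratio results from~\cite{CW3}, then apply the theorem. The only cosmetic difference is that the paper appeals to Baily's embedding theorem~\cite{Baily} to justify that a \Kahler Ricci soliton orbifold is a Q-Fano normal variety, whereas you argue this directly from the finite-quotient nature of orbifold singularities; both are valid.
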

 \begin{proof}
   According to Theorem 4.4 of~\cite{CW3}, every weak limit
   $\hat{M}$ is a K\"ahler Ricci soliton orbifold. It has  positive first Chern class
   and it can be embedded into projective space by its plurianticanonical line bundle
   sections (c.f.~\cite{Baily}). In particular, every $\hat{M}$ is a Q-Fano normal variety.
   So Theorem~\ref{theorem: justtamed} applies.
 \end{proof}

 In~\cite{RZZ}, Weidong Ruan, Yuguang Zhang and Zhenlei Zhang
 proved that the Riemannian curvature is uniformly bounded along the
 \KRf if $\int_M |Rm|^n d\mu$ is uniformly bounded.  Under such
 condition, every sequential limit is a smooth K\"ahler Ricci
 soliton manifold, therefore Theorem~\ref{theorem: justtamed} applies and we
 have
  \begin{corollary}
   Suppose $\{(M^n, g(t)), 0 \leq t < \infty\}$ is a \KRf along a
   Fano manifold $M^n$ and $n \geq 3$. If
   \begin{align*}
       \sup_{0 \leq t < \infty} \int_M \snorm{Rm}{g(t)}^n d\mu_{g(t)}
       < \infty,
   \end{align*}
   then $\{(M^n, g(t)), 0 \leq t < \infty\}$ is a tamed \KRf.
  \end{corollary}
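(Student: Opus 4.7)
The plan is to reduce the statement to Theorem~\ref{theorem: justtamed} by verifying its two hypotheses---the upper volume ratio bound and the weak compactness---and then invoke that theorem directly. The only substantive ingredient is the result of Ruan--Zhang--Zhang in~\cite{RZZ} cited just before the corollary: the assumption $\sup_t \int_M \snorm{Rm}{g(t)}^n d\mu_{g(t)} < \infty$ already forces a pointwise uniform bound $\snorm{Rm}{g(t)} \leq \Lambda$ on all of $M\times[0,\infty)$. I would take this as a black box.

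Granted this $C^0$ curvature bound, the upper volume ratio hypothesis is immediate from Bishop--Gromov volume comparison applied to the lower Ricci bound $\mathrm{Ric}\geq -Cg$ that $\snorm{Rm}{}\leq \Lambda$ implies. One obtains $\Vol_{g(t)}(B_{g(t)}(x,r))\leq Kr^{2n}$ for all $r\leq 1$ with $K$ depending only on $\Lambda$ and $n$. This gives the first hypothesis of Theorem~\ref{theorem: justtamed}.

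For the weak compactness hypothesis, I would combine the uniform $\snorm{Rm}{}$ bound with Perelman's uniform diameter bound and non-collapsing (Proposition~\ref{proposition: perelman}), together with Shi's local derivative estimates along the Ricci flow which upgrade to uniform control of every $\snorm{\nabla^k Rm}{}$. The Cheeger--Gromov--Hamilton compactness theorem then supplies, for any sequence $t_i\to\infty$, a subsequential smooth limit $(\hat M,\hat g,\hat J)$, where the limit complex structure is obtained by pulling back the fixed $J$ through the approximating diffeomorphisms and passing to the limit (the limit is automatically integrable since $C^\infty$ convergence preserves $\bar\partial_J \circ \bar\partial_J=0$ and compatibility with the metric). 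Because the convergence is smooth and $c_1>0$ is preserved, $\hat M$ is a smooth compact Fano manifold; by Kodaira embedding applied to its plurianticanonical bundle (cf.~\cite{Baily}), it embeds into projective space and is trivially a Q-Fano normal variety (being smooth).

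With both hypotheses verified, Theorem~\ref{theorem: justtamed} directly yields that the flow is tamed. There is no serious obstacle in this argument beyond invoking~\cite{RZZ}; the remaining steps are standard applications of comparison geometry and the Cheeger--Gromov--Hamilton compactness theory for the Ricci flow. The only mild point of attention---hardly an obstacle---is the $C^\infty$ convergence of the compatible almost complex structures, which is automatic once one has $C^\infty$ convergence of the metrics together with the uniform bound on $J$ given by the fixed background complex structure on $M$.
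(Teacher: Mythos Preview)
Your proposal is correct and follows essentially the same route as the paper: invoke~\cite{RZZ} to upgrade the $L^n$ curvature bound to a pointwise one, then feed the resulting smooth compactness (via Bishop--Gromov, Perelman's estimates, Shi, and Cheeger--Gromov--Hamilton) into Theorem~\ref{theorem: justtamed}. The paper's own argument is the one-sentence remark preceding the corollary, which asserts that under the $C^0$ curvature bound every sequential limit is a smooth K\"ahler Ricci soliton manifold (hence trivially a Q-Fano normal variety); your write-up simply unpacks the compactness step and the volume-ratio hypothesis that the paper leaves implicit.
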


\section{\KRF on Fano Surfaces}

 In this section, we give an application of the theorems we
 developed.

\subsection{Convergence of  2-dimensional \KRF}

 As the convergence of 2-dimensional \KRf was studied in~\cite{CW1}
 and~\cite{CW2} for all cases except $c_1^2(M)=1$ or $3$, we will
 concentrate on these two cases in this section.

  \begin{lemma}
     Suppose $M$ is a Fano surface, $S \in H^0(K_M^{-\nu})$, $x \in M$.
  \begin{itemize}
  \item  If $c_1^2(M)=1$, then $\alpha_x(S) \geq \frac{5}{6\nu}$ for
    every $S \in H^0(K_M^{-\nu}),  x \in M$.
  \item  If $c_1^2(M)=3$, then $\alpha_x(S) \geq \frac{2}{3\nu}$ for
  every $S \in H^0(K_M^{-\nu}), x \in M$.  Moreover,
  if $\alpha_x(S_1)=\alpha_x(S_2)=\frac{2}{3\nu}$, then $S_1=\lambda
  S_2$ for some constant $\lambda$.
  \end{itemize}
  \label{lemma: lalpha}
  \end{lemma}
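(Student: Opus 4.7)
The first move is to identify $\alpha_x(S)$ with the local log canonical threshold $\operatorname{lct}_x(Z(S))$ of the effective divisor cut out by $S$. In a local trivialization $S = f\cdot e^{\nu}$ of $K_M^{-\nu}$ one has $\snorm{S}{h^\nu}^{-2\alpha} = |f|^{-2\alpha}\cdot g^{-\nu\alpha}$ with $g$ smooth and positive, so local integrability at $x$ is equivalent to $\alpha < \operatorname{lct}_x(\{f=0\})$. Writing $D := \tfrac{1}{\nu}Z(S)$ gives an effective $\mathbb{Q}$-divisor with $D\equiv_{\mathbb{Q}} -K_M$, and the rescaling relation $\operatorname{lct}_x(\lambda D') = \lambda^{-1}\operatorname{lct}_x(D')$ converts the desired bounds into $\operatorname{lct}_x(D)\geq \tfrac{5}{6}$ for the degree 1 surfaces and $\operatorname{lct}_x(D)\geq \tfrac{2}{3}$ for cubic surfaces, uniformly in $\nu$ and $S$. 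So the problem becomes purely algebro-geometric: bound the local log canonical threshold at $x$ over all effective $\mathbb{Q}$-divisors in the anticanonical class.

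\textbf{Degree 1 strategy.} Here $(-K_M)^2 = 1$ leaves very little numerical room, but $M \cong \Blow{8}$ is covered by an abundant family of low-degree rational curves. The plan is to pick suitable test curves through $x$ — either a $(-1)$-curve $\ell$ through $x$ when one exists (then $-K_M\cdot \ell=1$ constrains $D$ immediately), or the unique member $C_x$ of the anticanonical pencil $|-K_M|$ through $x$ (which exists unless $x$ is the base point). For any irreducible $C$ through $x$ not contained in $\operatorname{supp}(D)$ one has
\begin{align*}
\operatorname{mult}_x(C)\cdot\operatorname{mult}_x(D) \;\leq\; D\cdot C \;=\; -K_M\cdot C,
\end{align*}
yielding an upper bound on $\operatorname{mult}_x(D)$. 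When the chosen curve happens to lie in $\operatorname{supp}(D)$, decompose $D = aC + D'$ with $a \in \mathbb{Q}_{\geq 0}$ and $C\not\subset\operatorname{supp}(D')$, then run the same argument on the residual $D'\equiv_{\mathbb{Q}}-K_M - aC$. Converting the resulting multiplicity bounds to lct bounds uses the standard surface inequality $\operatorname{lct}_x(D)\geq 1/\operatorname{mult}_x(D)$ and its sharper refinement for divisors having a distinguished smooth component through $x$ (inversion of adjunction restricted to that component).

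\textbf{Degree 3 strategy and rigidity.} For a cubic surface $M\subset\mathbb{P}^3$, the extremal configuration is the Eckardt point: a tritangent plane cuts $M$ in three concurrent lines $L_1,L_2,L_3$, and $D_0 = \tfrac{1}{3}(L_1+L_2+L_3)\equiv_{\mathbb{Q}} -K_M$ satisfies $\operatorname{lct}_x(D_0)=2/3$. The plan is to show this is worst-case using the 27 lines as test curves. For an arbitrary $D\equiv_{\mathbb{Q}}-K_M$, extract its part along lines through $x$ as $D = \sum_i a_i L_i + D'$ with $D'$ containing no such line; at most three lines pass through $x$, so intersecting with a further test line or a smooth hyperplane section through $x$ controls both the coefficients $a_i$ and $\operatorname{mult}_x(D')$. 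The weighted bookkeeping, together with the inversion-of-adjunction lower bound for lct, forces $\operatorname{lct}_x(D)\geq 2/3$, with equality only for $D_0$. For the rigidity statement, equality $\alpha_x(S_j) = 2/(3\nu)$ forces each $\tfrac{1}{\nu}Z(S_j)$ to coincide with the Eckardt divisor $D_0$, and since a global section of $K_M^{-\nu}$ is determined up to scalar by its zero divisor, $S_2 = \lambda S_1$.

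\textbf{Main obstacle.} The reduction to lct and the one-curve intersection bound are routine; the real work is the case analysis when the natural test curve is itself a component of $D$ with large coefficient. In the degree 1 case, sharpness of $5/6$ (rather than the easier $1/2$ or $2/3$) requires ruling out a zoo of finer singular configurations — high-multiplicity components supported on $(-1)$-curves, nodal or cuspidal anticanonical members passing through $x$, and so on — each handled by combining intersection numbers in the Mori cone of $\Blow{8}$ with the geometry of the elliptic fibration $|-K_M|\to \mathbb{P}^1$. This is where most of the technical length will go.
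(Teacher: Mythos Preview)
Your proposal is a genuinely different route from the paper's. The paper never passes to the $\mathbb{Q}$-divisor $D=\tfrac{1}{\nu}Z(S)$; instead it works with integer $\nu$ and runs an induction on $\nu$. The base case $\nu=1$ is handled by an explicit table of singularity types of plane cubics (Propositions~\ref{proposition: 6} and~\ref{proposition: 8}). For the inductive step the key tool is the H\"older-type inequality $\alpha_x(fg)^{-1}\le \alpha_x(f)^{-1}+\alpha_x(g)^{-1}$: assuming $\alpha_x(S)\le \tfrac{2}{3m}$ (respectively $<\tfrac{5}{6m}$), one forces $Z(S)$ to contain an anticanonical divisor, splits $S=S'S_{m-1}$, and applies induction. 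Your approach via lct of $\mathbb{Q}$-divisors, test curves, and inversion of adjunction is the modern Cheltsov-style argument; it is more uniform in $\nu$ and connects directly to the birational literature, while the paper's induction is more elementary and entirely self-contained (no inversion of adjunction is invoked).

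Two concrete remarks. First, your Eckardt divisor is misnormalized: on a cubic surface a tritangent plane already gives $L_1+L_2+L_3\sim -K_M$, so $D_0=L_1+L_2+L_3$ (no factor $\tfrac{1}{3}$), and then $\operatorname{lct}_x(D_0)=\tfrac{2}{3}$ as you want. Second, you flag the degree~$1$ case as the ``main obstacle'' requiring a long case analysis, but in the paper it is actually the short case: the crucial geometric fact is that on $\Blow{8}$ every anticanonical divisor is \emph{irreducible} (Proposition~\ref{proposition: 8}), so a single intersection with an anticanonical curve through $x$ forces the splitting $S=S'S_{m-1}$ and the induction closes in two lines. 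You should exploit this irreducibility rather than anticipating a zoo of configurations; the lengthy case analysis in the paper occurs only for the cubic surface, where one must successively force one, two, then three lines through $x$ into $Z(S)$.
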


  As a direct corollary, we have

  \begin{lemma}
  Suppose $M$ is a Fano surface, $\nu$ is any positive integer.
  \begin{itemize}
  \item  If $c_1^2(M)=1$, then  $\alpha_{\nu, 1} \geq \frac{5}{6}$.
  \item  If $c_1^2(M)=3$, then  $\alpha_{\nu, 1}= \frac{2}{3}$,
      $\alpha_{\nu, 2}> \frac{2}{3}$.
  \end{itemize}
  \label{lemma: nualpha}
  \end{lemma}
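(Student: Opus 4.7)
The plan is to deduce Lemma~\ref{lemma: nualpha} from Lemma~\ref{lemma: lalpha} by a compactness argument on the (orthonormal) $k$-frames in $H^0(K_M^{-\nu})$, combined with the fact (noted in the introduction) that $\alpha_x(S)$ takes only finitely many values as $S$ varies in $H^0(K_M^{-\nu})$.

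For the lower bounds on $\alpha_{\nu,1}$: write $\varphi=\tfrac{1}{\nu}\log\|\tilde S\|^2$ for a unit section $\tilde S$, so $e^{-\alpha\varphi}=\|\tilde S\|^{-2\alpha/\nu}$ is locally integrable at $x$ precisely when $\alpha/\nu<\alpha_x(\tilde S)$. By Lemma~\ref{lemma: lalpha}, this holds for every $x$ whenever $\alpha<c$, with $c=5/6$ if $c_1^2=1$ and $c=2/3$ if $c_1^2=3$, so a finite cover of $M$ makes $\int_M e^{-\alpha\varphi}\omega^n<\infty$ for each such $\alpha$ and each unit $\tilde S$. Uniformity over the compact unit sphere in $H^0(K_M^{-\nu})$ then follows from Demailly-Kollár semicontinuity of singularity exponents together with the discreteness of the values of $\alpha_x(S)$. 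This yields $\alpha_{\nu,1}\geq c$. In the cubic case, the matching upper bound $\alpha_{\nu,1}\leq 2/3$ comes from sharpness of Lemma~\ref{lemma: lalpha}: there exists some unit $S_0$ and $x_0\in M$ achieving $\alpha_{x_0}(S_0)=2/(3\nu)$, so $\int_M\|S_0\|^{-2\alpha/\nu}\omega^n=+\infty$ for every $\alpha\geq 2/3$.

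For the strict inequality $\alpha_{\nu,2}>2/3$ in the $c_1^2=3$ case, argue by contradiction. Assume $\alpha_{\nu,2}=2/3$; then there exist $\alpha_i\nearrow 2/3$ and orthonormal pairs $(\tilde S_0^{(i)},\tilde S_1^{(i)})$ with $\int_M(\|\tilde S_0^{(i)}\|^2+\|\tilde S_1^{(i)}\|^2)^{-\alpha_i/\nu}\omega^n\to\infty$. By compactness of the Stiefel manifold of orthonormal pairs in $H^0(K_M^{-\nu})$, extract a subsequential limit orthonormal pair $(\tilde S_0,\tilde S_1)$. Using the elementary bound $(\|\tilde S_0\|^2+\|\tilde S_1\|^2)^{-\alpha/\nu}\leq \min_j\|\tilde S_j\|^{-2\alpha/\nu}$, the local integrability of the sum at $x$ is controlled by $\max_j\alpha_x(\tilde S_j)$. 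By the ``moreover'' clause of Lemma~\ref{lemma: lalpha}, linearly independent $\tilde S_0,\tilde S_1$ cannot both achieve $\alpha_x=2/(3\nu)$ at the same $x$, so $\max_j\alpha_x(\tilde S_j)>2/(3\nu)$ for every $x\in M$. Finite-valuedness of $\alpha_x$ on $H^0(K_M^{-\nu})$ promotes this to a uniform strict inequality $\max_j\alpha_x(\tilde S_j)\geq 2/(3\nu)+\delta$ for some $\delta>0$, which survives the semicontinuous passage to the approximating sequence and contradicts the divergence of the integrals.

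The principal obstacle is uniformity. Lemma~\ref{lemma: lalpha} yields only pointwise (and pair-wise) conclusions, whereas the supremum defining $\alpha_{\nu,k}$ ranges over an infinite family of orthonormal $k$-frames and all $x\in M$; in particular the strict inequality $\alpha_{\nu,2}>2/3$ would fail if thresholds could accumulate at $2/(3\nu)$ from above. The discreteness of the possible values of $\alpha_x(S)$ for $S\in H^0(K_M^{-\nu})$, an algebraic-geometric input, is what rules this out and is the critical ingredient that allows the compactness/semicontinuity argument to close.
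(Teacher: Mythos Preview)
The paper does not actually supply a proof of Lemma~\ref{lemma: nualpha}; it simply declares it ``a direct corollary'' of Lemma~\ref{lemma: lalpha}. Your proposal therefore fills in details the paper omits, and the strategy you outline---compactness of orthonormal $k$-frames together with Demailly--Koll\'ar semicontinuity and the finiteness of the set of values $\{\nu\alpha_x(S): S\in H^0(K_M^{-\nu}),\ x\in M\}$---is exactly the natural one, and indeed the paper itself invokes semicontinuity for sums of two sections in the proof of Theorem~\ref{theorem: nuconvr}. The key observation for $\alpha_{\nu,2}>\tfrac23$ is the one you isolate: the uniqueness clause of Lemma~\ref{lemma: lalpha} forces $\max_j \alpha_x(\tilde S_j)>\tfrac{2}{3\nu}$ at every point for any linearly independent pair, and finiteness of the value set then gives a uniform gap.

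Two small points. First, in your contradiction argument you take $\alpha_i\nearrow\tfrac23$; this is the wrong side. If $\alpha_{\nu,2}=\tfrac23$ then for every $\alpha<\tfrac23$ the supremum over pairs is \emph{finite}, so you cannot extract a divergent sequence from below. You must take $\alpha_i\searrow\tfrac23$ from above (or simply fix any $\alpha>\tfrac23$) to produce pairs with unbounded integral. With that correction the rest of your argument goes through. In fact, the contradiction framework is unnecessary: since the gap $\delta_0=\min\{v-\tfrac23: v\in\nu\cdot\{\alpha_x(S)\},\ v>\tfrac23\}$ is universal, you get $\max_j\nu\alpha_x(\tilde S_j)\geq\tfrac23+\delta_0$ for \emph{every} orthonormal pair directly, and then compactness plus semicontinuity gives a uniform integral bound at exponent $\tfrac23+\tfrac{\delta_0}{2}$.

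Second, your justification of the equality $\alpha_{\nu,1}=\tfrac23$ appeals to ``sharpness of Lemma~\ref{lemma: lalpha}'', but that lemma only characterizes the equality case; it does not assert existence of a section achieving $\alpha_x(S)=\tfrac{2}{3\nu}$. Such a section exists only when the cubic surface has an Eckardt point (three of the $27$ lines meeting at a point), which is not automatic. Note that the paper's own Lemma~\ref{lemmain: localalpha} in the introduction states only $\alpha_{\nu,1}\geq\tfrac23$; the equality in the body text may be a slip, and in any case only the inequality $\alpha_{\nu,1}\geq\tfrac23$ is needed for Theorem~\ref{theorem: nuconvr}.
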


  Because of Lemma~\ref{lemma: nualpha} and Corollary~\ref{corollary: nucontrol},
  we are able to apply Theorem~\ref{theorem: nuconv} and
  Theorem~\ref{theorem: nuconvr} respectively to obtain the
  following theorem.

  \begin{theorem}
    If $M$ is a Fano surface with $c_1^2(M)=1$ or $c_1^2(M)=3$,  then
    the \KRf on M converges to a KE metric exponentially fast.
  \end{theorem}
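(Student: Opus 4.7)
The plan is to deduce this theorem by combining the tamed-flow criterion established earlier with the $\alpha$-invariant estimates recorded in Lemma~\ref{lemma: nualpha}. Since $M$ is a Fano surface, Corollary~\ref{corollary: nucontrol} guarantees that the \KRf on $M$ is tamed, so some integer $\nu$ can be chosen for which $K_M^{-\nu}$ is very ample and $F_{\nu}$ is uniformly bounded on $M \times [0,\infty)$. This reduces the question to verifying the numerical hypotheses of either Theorem~\ref{theorem: nuconv} or Theorem~\ref{theorem: nuconvr} for $n=2$.

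For the case $c_1^2(M)=1$, I would simply invoke Theorem~\ref{theorem: nuconv}: Lemma~\ref{lemma: nualpha} asserts $\alpha_{\nu,1}\geq \tfrac{5}{6}>\tfrac{2}{3}=\tfrac{n}{n+1}$, so that theorem directly yields a uniform bound for $\varphi$ and the exponential convergence to a K\"ahler--Einstein metric.

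For the case $c_1^2(M)=3$, the plan is to apply Theorem~\ref{theorem: nuconvr}. Lemma~\ref{lemma: nualpha} gives $\alpha_{\nu,1}\geq \tfrac{2}{3}$ and, crucially, the strict inequality $\alpha_{\nu,2}>\tfrac{2}{3}$. The first hypothesis $\alpha_{\nu,2}>\tfrac{n}{n+1}=\tfrac{2}{3}$ is then immediate. The second hypothesis demands $\alpha_{\nu,1}>\bigl(2-\tfrac{n-1}{(n+1)\alpha_{\nu,2}}\bigr)^{-1}=\bigl(2-\tfrac{1}{3\alpha_{\nu,2}}\bigr)^{-1}$; because $\alpha_{\nu,2}>\tfrac{2}{3}$ forces $\tfrac{1}{3\alpha_{\nu,2}}<\tfrac{1}{2}$, the right-hand side is strictly less than $\tfrac{2}{3}\leq \alpha_{\nu,1}$, so this inequality also holds. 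Theorem~\ref{theorem: nuconvr} then yields uniform boundedness of $\varphi$ and exponential convergence.

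There is no real obstacle here beyond checking that the strict part of the bound $\alpha_{\nu,2}>\tfrac{2}{3}$ in Lemma~\ref{lemma: nualpha} is genuinely strict and uniform in $\nu$; this is exactly the content of the last bullet of that lemma (indeed, traced back to the uniqueness statement in Lemma~\ref{lemma: lalpha}, where equality $\alpha_x(S_1)=\alpha_x(S_2)=\tfrac{2}{3\nu}$ forces $S_1$ and $S_2$ to be proportional, ruling out two independent sections achieving the boundary value). Once that strictness is in hand, both cases of the theorem reduce to a one-line numerical check and an application of the previously established convergence theorems.
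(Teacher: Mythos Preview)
Your proposal is correct and follows exactly the paper's own route: invoke Corollary~\ref{corollary: nucontrol} to get a tamed flow, then feed the $\alpha$-invariant bounds of Lemma~\ref{lemma: nualpha} into Theorem~\ref{theorem: nuconv} for $c_1^2(M)=1$ and into Theorem~\ref{theorem: nuconvr} for $c_1^2(M)=3$. The only superfluous worry is your remark about uniformity of the strict inequality $\alpha_{\nu,2}>\tfrac{2}{3}$ in $\nu$: the tamed condition hands you one fixed $\nu$, and Lemma~\ref{lemma: nualpha} already asserts the strict inequality for every positive integer $\nu$, so no uniformity issue arises.
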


  Combining this with the result in~\cite{CW1}
  and~\cite{CW2}, we have proved the following result by Ricci flow method.

  \begin{theorem}
    Every Fano surface $M$ has a KRS metric in its canonical class.
    This KRS metric is a KE metric if and only if $Aut(M)$ is
    reductive.
  \end{theorem}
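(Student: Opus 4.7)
The plan is to combine the classification of smooth Fano surfaces with the results developed in this paper and in \cite{CW1}, \cite{CW2}. The ``only if'' direction of the second statement is classical: existence of a KE metric forces $\Aut(M)$ to be reductive by Matsushima's theorem. So the task reduces to (i) producing a KRS on every Fano surface, and (ii) showing the K\"ahler Ricci flow converges to a KE metric whenever $\Aut(M)$ is reductive.

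Recall that every smooth Fano surface is biholomorphic to $\CP^2$, $\PxP$, or $\Blow{k}$ with $1 \leq k \leq 8$; equivalently $c_1^2(M) \in \{1,2,\dots,9\}$. Among these, $\Aut(M)$ fails to be reductive exactly when $k=1$ or $k=2$. I would first dispatch $\CP^2$ and $\PxP$, whose Fubini-Study metrics are KE and for which KRF convergence follows from Perelman's estimates together with Bando-Mabuchi uniqueness. For the non-reductive cases $k=1,2$, I would invoke the Koiso-Cao and Wang-Zhu K\"ahler Ricci solitons (both targets being toric), combined with Lemma \ref{lemmain: weakcompactness} and the Tian-Zhu uniqueness of KRS modulo $\Aut$-action, to identify the sequential flow limit with the prescribed KRS; by Matsushima this KRS is not KE.

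It remains to treat the reductive blow-ups $3 \leq k \leq 8$, i.e., $c_1^2(M) \in \{1,2,3,4,5,6\}$. The cases outside $\{1,3\}$ are handled in \cite{CW1} and \cite{CW2}. For $c_1^2(M) \in \{1,3\}$ the new machinery of the present paper applies in three steps. First, Lemma \ref{lemmain: weakcompactness} together with the volume ratio upper bound from \cite{CW3} verifies the hypotheses of Theorem \ref{theoremin: justtamed}, so the flow is tamed by some $\nu$. Second, Lemma \ref{lemmain: localalpha} supplies the local $\alpha$-invariant bounds, namely $\alpha_{\nu,1} \geq 5/6$ when $c_1^2=1$, and $\alpha_{\nu,1} \geq 2/3$ together with $\alpha_{\nu,2} > 2/3$ when $c_1^2=3$. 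Third, these bounds trigger Theorem \ref{theoremin: nuconv} and Theorem \ref{theoremin: nuconvr} respectively, yielding uniform $C^0$-control of $\varphi$ and hence exponential convergence of the flow to a KE metric.

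The main obstacle is verifying the tamed condition in the two new cases $c_1^2=1,3$. The limit spaces are genuinely singular Q-Fano normal varieties, so one needs plurianticanonical sections to behave continuously under Cheeger-Gromov convergence; this is secured by the H\"ormander $L^2$-estimate (Theorem \ref{theorem: bundleconv}) together with Perelman's fundamental estimates and the uniform Sobolev constant underlying the uniform bounds $\snorm{S}{h_t^{\nu}}, \snorm{\nabla S}{h_t^{\nu}} \leq C$ for normalized sections. The $\alpha$-invariant inequalities of Lemma \ref{lemmain: localalpha} are algebro-geometric in nature and originate in \cite{Chl} and \cite{SYl}.
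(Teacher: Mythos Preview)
Your proposal is correct and follows the paper's own approach: the paper offers no detailed proof here, simply stating that the theorem follows by combining the $c_1^2(M)\in\{1,3\}$ convergence result just proved with the cases already treated in \cite{CW1} and \cite{CW2}. Your case-by-case breakdown is a faithful expansion of that one-line argument. One small remark: for $k=1,2$ your invocation of Lemma~\ref{lemmain: weakcompactness} plus Tian--Zhu uniqueness to identify the sequential orbifold limit with the Koiso--Cao or Wang--Zhu soliton would require the extra step of showing the limit is smooth and biholomorphic to $M$; but since you also cite the Koiso--Cao and Wang--Zhu constructions directly, the existence claim (which is all the theorem asserts) holds regardless, and indeed the cleaner route is simply to quote \cite{TZ} for flow convergence once a KRS is known to exist.
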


  In particular, we have proved the Calabi conjecture on Fano surfaces by flow
  method.  This conjecture was first proved by Tian in~\cite{Tian90}
  via continuity method.

  \begin{remark}
   In~\cite{Chl}, Cheltsov proved the following fact.
   Unless $M$ is a cubic surface with bad symmetry and with
   Eckardt point (a point passed through by three
   exceptional  lines),  then there exists a finite group $G$ such that
   $\alpha_G(M, \omega)> \frac23$ for every $M$ satisfying
   $c_1^2(M) \leq 5$.   Using this fact, we obtain the convergence
   of \KRf on $M$ directly if $M \sim \Blow{8}$. We thank Tian and Cheltsov for pointing
   this out to us.  However, for the consistency of our own programme,
   we still give an independent proof for the convergence of \KRf on $\Blow{8}$
   without applying this fact.
 \end{remark}

 \subsection{Calculation of Local $\alpha$-invariants}
  In this subsection, we give a basic proof of
  Lemma~\ref{lemma: lalpha}.

  \subsubsection{Local $\alpha$-invariants of  Anticanonical Holomorphic Sections }

  \begin{proposition}
     Let $S \in H^0(\CP^2, 3H)$, $Z(S)$ be the divisor generated by $S$, $x \in Z(S)$.
      Then $\alpha_x(S)$  is totally determined by the singularity
      type of $x$.
      It is classified as in the table~\ref{tab: localalpha}.

  \begin{table}[h]
  \begin{center}
  \begin{tabular}{|c|l|c|}
  \hline
  $\alpha_x(S)$ & Singularity type of $x$ &  $S$'s typical local equation\\
  \hline
  $1$ & smooth & $z$ \\
      \cline{2-3}
      & transversal intersection of two lines & $zw$\\
      \cline{2-3}
      & transversal intersection of a line and a conic curve & $zw$\\
      \cline{2-3}
      & ordinary double point & $z^2-w^2(w+1)$\\
  \hline
  $\frac56$ &  cusp & $z^3 - w^2$ \\
  \hline
  $\frac34$ & tangential intersection of a line and  a conic curve & $z(z+w^2)$\\
  \hline
  $\frac23$ & intersection of three different lines & $zw(z+w)$ \\
  \hline
  $\frac12$ & a point on a double line & $z^2$\\
                 \hline
  $\frac13$ & a point on a triple line & $z^3$ \\
  \hline
  \end{tabular}
  \caption{Local $\alpha$ invariants of holomorphic anticanonical sections on projective plane}
  \label{tab: localalpha}
  \end{center}
  \end{table}
  \end{proposition}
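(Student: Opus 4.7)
The plan is to invoke the invariance of $\alpha_x(S)$ under local analytic equivalence, already cited after Definition~\ref{definitionin: lalpha}, and reduce the proposition to two tasks: (i) list every possible local analytic type for a singular point on a plane cubic, and (ii) compute the log canonical threshold for each model in the table.

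For (i), I would use the standard classification of plane cubic singularities. Since $\operatorname{mult}_x Z(S) \leq 3$, and an irreducible cubic has arithmetic genus $1$ and hence total delta invariant at most $1$, an irreducible cubic admits only $A_1$ (node) or $A_2$ (cusp) singularities. The remaining possibilities are reducible: a line plus a conic produces either a pair of transverse nodes or, if tangent at a point, an $A_3$ tacnode; three distinct lines give nodal intersections unless concurrent, in which case the point of concurrency is a $D_4$ ordinary triple point; and the non-reduced configurations yield a smooth point of a doubled line (local equation $z^2$) or a point on a tripled line ($z^3$). The table rows with $\alpha_x=1$ are all analytically equivalent to the node model $zw$; for instance $z^2-w^2(w+1)$ factors as $(z-w\sqrt{w+1})(z+w\sqrt{w+1})$ near the origin into two transverse smooth branches.

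For (ii), the monomial models $z^a w^b$ are immediate from Fubini: $\int |z|^{-2a\alpha}|w|^{-2b\alpha}\,dV$ converges precisely when $\alpha < \min(1/a,1/b)$, giving the entries for the smooth point, the node, the double line, and the triple line. The remaining three models are weighted homogeneous (after a coordinate change for the tacnode), so each $\alpha_x$ follows from the general formula $\min\{1,(a+b)/d\}$ for an $f$ of weighted degree $d$ with weights $(a,b)$: the cusp $z^3-w^2$ has weights $(2,3)$ and $d=6$, giving $5/6$; the ordinary triple point $zw(z+w)$ has weights $(1,1)$ and $d=3$, giving $2/3$; and the tacnode $z(z+w^2)$, after the substitution $u = z+\tfrac12 w^2$, becomes $u^2-\tfrac14 w^4$, which is weighted homogeneous of weights $(2,1)$ and $d=4$, giving $3/4$.

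The main obstacle is rigorously justifying the weighted homogeneous formula. I would do this either via a discrepancy computation on the weighted blow-up $\pi:Y\to\mathbb{C}^2$ of weights $(a,b)$ -- noting $K_Y=\pi^*K_{\mathbb{C}^2}+(a+b-1)E$ and $\pi^*Z(f)=\widetilde{Z}+dE$ with $\widetilde{Z}$ smooth -- or more elementarily by splitting the local integral into regions where one monomial term of $f$ dominates and reducing to one-variable polar integrals of the form $\int_0^1 \rho^{c-k\alpha}\,d\rho$. For the cusp, the two complementary regions $\{|w|^2\lesssim|z|^3\}$ and $\{|z|^3\lesssim|w|^2\}$ both yield the sharp threshold $\alpha<5/6$, confirming that table entry; the ordinary triple point and tacnode are handled by the same technique after the coordinate change described above. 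The delicate point in the tacnode case is checking that the substitution $u=z+\tfrac12 w^2$ really produces weighted homogeneous form and that no extra lower-order obstruction appears, but once this change of coordinates is verified the remaining computations are mechanical.
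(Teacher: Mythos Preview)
Your proposal is correct and is essentially the ``direct computation'' the paper invokes as its entire proof; in fact you have supplied considerably more detail than the paper does. The classification of cubic singularities is complete, the monomial cases are handled correctly by Fubini, and your weighted-homogeneous computations for the cusp, tacnode, and ordinary triple point all check out (the tacnode substitution $u=z+\tfrac12 w^2$ is indeed a legitimate analytic change of coordinates yielding the $A_3$ normal form $u^2-\tfrac14 w^4$).
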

  \begin{proof}
    Direct computation.
  \end{proof}

  \begin{proposition}
     Suppose $M$ to be a Fano surface and $M= \Blow{6}$.
       $S \in H^0(M,K_M^{-1})$. Then  $\alpha_x(S) \geq \frac23$ for every $x \in  Z(S)$.
     Moreover, if both $S_1$ and $S_2 \in H^0(M, K_M^{-1})$,
     $\alpha_x(S_1) = \alpha_x(S_2)= \frac23$. Then there exists a
     nonzero constant $\lambda$ such that $S_1= \lambda S_2$.
  \label{proposition: 6}
  \end{proposition}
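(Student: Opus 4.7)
The plan is to use the anticanonical embedding of $M = \Blow{6}$ as a smooth cubic surface in $\mathbb{CP}^3$ via the complete linear system $|{-K_M}|$, so that any nonzero section $S \in H^0(M, K_M^{-1})$ corresponds, up to scalar, to a unique hyperplane $H \subset \mathbb{CP}^3$ with zero divisor $Z(S) = M \cap H$ lying inside $H \cong \mathbb{CP}^2$ as a plane cubic curve. Since $M$ is smooth at $x$, the value $\alpha_x(S)$ depends only on the germ of this plane cubic at $x$ and can therefore be read off from the table in the preceding proposition. Thus the problem reduces to enumerating which plane cubic singularities can actually appear on such a hyperplane section.

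The main step is to show that $Z(S)$ is always a reduced divisor. If some irreducible component $D$ of degree $d \in \{1,2\}$ occurred with multiplicity $m \geq 2$, then a short intersection-theoretic check using the identities $(-K_M)^2 = 3$, $-K_M \cdot D = d$, $L^2 = -1$ for every line $L \subset M$, and the classical fact that two distinct lines on a smooth cubic surface meet in at most one point, eliminates each of the only remaining candidates $Z(S) = 3L$, $2L_1 + L_2$, $2L + C$, or $2C$. Once reducedness is in place, the possible singular germs are fully classified: at a point of multiplicity $2$ only $A_1$, $A_2$, or $A_3$ arises; at a point of multiplicity $3$ the whole cubic must decompose as three lines through $x$, which are distinct by reducedness, giving an ordinary triple point. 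Looking up these germs in the table yields $\alpha_x(S) \geq 2/3$, with equality precisely when $Z(S)$ is a triangle of three distinct concurrent lines at $x$.

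For the rigidity claim, suppose $\alpha_x(S_1) = \alpha_x(S_2) = 2/3$. By the equality case above, each $Z(S_i)$ is a union of three distinct lines on $M$ concurrent at $x$. Every line $L \subset M$ through $x$ is automatically contained in the projective tangent hyperplane to $M$ at $x$, since its tangent direction at $x$ lies in $T_x M$. Consequently $Z(S_1)$ and $Z(S_2)$ are both supported in this unique hyperplane, forcing the hyperplanes cut out by $S_1$ and $S_2$ both to coincide with it; hence $S_1 = \lambda S_2$ for some nonzero scalar $\lambda$. The main technical obstacle is the reducedness step, which requires care with the intersection theory of the twenty-seven lines on the cubic surface; once reducedness is in hand, the classification of plane cubic singularities and the uniqueness of the tangent hyperplane make the remainder direct.
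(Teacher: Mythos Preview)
Your proof is correct and takes a genuinely different route from the paper. The paper works with the blowdown $\pi: M \to \CP^2$ at six generic points: it first rules out double and triple lines in the pushforward $\pi_*(Z(S))$, reads off $\alpha_{\pi(x)}$ from Table~\ref{tab: localalpha} when $\pi(x)$ avoids the blowup centers, and then runs a four-case analysis when $\pi(x)$ is one of the $p_i$, examining how the strict transform meets the exceptional divisor $E_i$. You instead use the anticanonical embedding of $M$ as a smooth cubic in $\CP^3$, so that every $Z(S)$ is a hyperplane section; your reducedness step via the identities $L^2=-1$ and $-K_M\cdot L=1$ replaces the paper's ``no triple line / no double line'' argument, and the uniform classification of reduced plane-cubic germs ($A_1,A_2,A_3,D_4$) replaces the case split over exceptional fibers. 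The payoff is a cleaner rigidity argument: rather than invoking the enumeration of the $27$ lines and the fact that at most three pass through any point, as the paper does, you observe that every line on $M$ through $x$ lies in the projective tangent hyperplane at $x$, forcing $Z(S_1)=Z(S_2)=M\cap H_x$ directly. One minor remark: in your list of non-reduced candidates, $2L+C$ and $2C$ are already excluded by the degree constraint $(-K_M)^2=3$, so only $3L$ and $2L_1+L_2$ actually require the self-intersection check.
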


   \begin{proof}
    Let $M$ to be $\CP^2$ blowup at points $p_1, \cdots, p_6$ in
    generic positions. Let $\pi: M \to \CP^2$ to be the inverse of blowup process.
     If $S \in H^0(M, K_{M}^{-1})$, then  $\pi_*(Z(S))$ must be a cubic curve $\gamma$
     (maybe reducible) in $\CP^2$ and it must pass through every
     point $p_i$.   It cannot contain any triple line. Otherwise,
     assume it contains a triple line connecting $p_1$ and $p_2$.
     Then $Z(S)= 3H - aE_1-bE_2$ for some $a, b \in \Z^+$. On the
     other hand, we know $Z(S)= 3H- \sum_{i=1}^6 E_i$.
     Contradiction!

      Since no three $p_i$'s are in a same line, similar argument shows that there
      is no double line in  $\pi_*(Z(S))$.

      So the table~\ref{tab: localalpha} implies $\alpha_x(S) = \alpha_{\pi(x)}(\pi_*(S)) \geq  \frac23$
      whenever $\pi(x) \in \CP^2 \backslash \{p_1, \cdots, p_6\}$.
      Therefore we only need to consider singular point $x \in \pi^{-1}(\{p_1, \cdots,
      p_k\})$. Without loss of generality, we assume $x \in \pi^{-1}(p_1)$ and $x$ is a singular point of
      $Z(S)$.      We consider this situation by the singularity type
      of $\pi_*(x)$. Actually, $x$ is a singular point of $Z(S)$  only if $\pi_*(x)$ is a
       singular point of $\pi_*(Z(S))$.  By table~\ref{tab: localalpha},
        we have the  following classification.
       \begin{enumerate}
        \item    $\pi_*(x)=p_1$ is an intersection point of three
        different lines.   This case cannot happen. If such three
        lines exist, one of them must pass through $3$ blowup
        points. Impossible.
        \item   $\pi_*(x)$ is an intersection point of two different
        lines.  In this case, $x$ must be a transversal intersection of
        a curve and the exceptional divisor $E_1$. Therefore,
        $\alpha_x(S)=1$.
        \item   $\pi_*(x)$ is a cusp point. In this case, $x$ must
        be a tangential intersection of a smooth curve and the
        exceptional divisor $E_1$. Moreover, the tangential order is
        just $1$. So $\alpha_x(S)= \frac34$.
        \item   $\pi_*(x)$ is a tangential intersection point of a
        line and a conic curve.   In this case, $x$ is the transversal intersection point
      of three curves   $\gamma_1, \gamma_2,  \gamma_3$. Moreover, they have particular properties.
      $[\gamma_1]=E_1, [\gamma_2] \sim 2H-\sum_{l=1}^6 E_l + E_j,[\gamma_3] \sim H-E_1-E_j$ for
      some $j \in \{2, \cdots, 6\}$.  $x$
      is the intersection of $3$ exceptional lines.
       Clearly,  $\alpha_x(S)=\frac23$.
       \end{enumerate}

    Therefore, no matter whether $x \in \pi^{-1} \{ p_1, \cdots, p_6 \}$,
    we see $\alpha_x(S) \geq \frac23$.  Moreover, $\alpha_x(S)= \frac23$ only if
     $x$ is the transversal intersection of three exceptional lines.

    It is well known that $M$ is a cubic surface,  there are totally $27$ exceptional lines on
    $M$. Every point can be passed through by at most three
    exceptional lines.  Therefore, if $\alpha_x(S_1)= \alpha_x(S_2)=
    \frac23$, then $Z(S_1)=Z(S_2)$ as union of three exceptional lines
    passing through $x$. So there is a nonzero constant $\lambda$
    such that $S_1= \lambda S_2$.

% This explanation are very
%    classical and is illustrated by Figure~\ref{figure: lines}.
%  \begin{figure}
%  \begin{center}
%  \psfrag{m1}[c][c]{$M \sim \Blow{5}$}
%  \psfrag{m2}[c][c]{$M \sim \Blow{6}$}
%  \psfrag{p1}[c][c]{$p_1$}
%  \psfrag{p2}[c][c]{$p_2$}
%  \psfrag{p3}[c][c]{$p_3$}
%  \psfrag{p4}[c][c]{$p_4$}
%  \psfrag{p5}[c][c]{$p_5$}
%  \psfrag{p6}[c][c]{$p_6$}
%  \psfrag{e1}[c][c]{$E_1$}
%  \psfrag{e2}[c][c]{$E_2$}
%  \psfrag{e3}[c][c]{$E_3$}
%  \psfrag{e4}[c][c]{$E_4$}
%  \psfrag{e5}[c][c]{$E_5$}
%  \psfrag{e6}[c][c]{$E_6$}
%  \psfrag{x}[c][c]{$x$}
%  \psfrag{pi}[c][c]{$\pi$}
%  \psfrag{pix}[c][c]{$\pi(x)$}
%  \psfrag{s}[c][c]{$Z(S)$}
%  \psfrag{pis}[c][c]{$\pi_*(Z(S))$}
% \includegraphics[width=0.8 \columnwidth]{lines}
% \caption{ Configuration of anticanonical curves on $M$ }
% \label{figure: lines}
% \end{center}
% \end{figure}
   \end{proof}

  \begin{proposition}
     Suppose $M$ to be a Fano surface and $M \sim \Blow{8}$.
       $S \in H^0(M,K_M^{-1})$. Then  $\alpha_x(S) \geq \frac56$ for every $x \in Z(S)$.
  \label{proposition: 8}
  \end{proposition}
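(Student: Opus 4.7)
The plan is to classify the possible singularities of the effective anticanonical divisor $D=Z(S)$ on $M$ and then read off $\alpha_x(S)$ from the local analytic germ using Table~\ref{tab: localalpha}. The argument proceeds in three steps: first show $D$ is irreducible and reduced, then use adjunction to bound the total $\delta$-invariant of its singularities by $1$, and finally apply the known log canonical thresholds of the (very short) resulting list of germs.

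For the first step I would exploit the Fano hypothesis: since $-K_M$ is ample, every irreducible curve $C\subset M$ satisfies $C\cdot(-K_M)\geq 1$, and in particular no $(-2)$-curves exist on $M$. Writing $D=\sum_i n_iC_i$ with each $n_i\geq 1$ and using
\begin{align*}
D\cdot(-K_M)=(-K_M)^2=c_1^2(M)=1,
\end{align*}
one sees that the only possibility is $D=C$ irreducible and reduced, with $C\cdot(-K_M)=1$. For the second step, adjunction gives $p_a(C)=1+\tfrac12(C^2+C\cdot K_M)=1$. Since $C$ is irreducible, $g_{\mathrm{geom}}(C)=p_a(C)-\sum_{x}\delta_x(C)\geq 0$, so the total $\delta$-invariant satisfies $\sum_{x}\delta_x(C)\leq 1$. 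This leaves exactly three admissible analytic germs of $C$ at a point $x$: a smooth point ($\delta=0$), an ordinary node of type $A_1$ ($\delta=1$, local equation $xy=0$), and an ordinary cusp of type $A_2$ ($\delta=1$, local equation $y^2-x^3=0$); any tacnode, triple point, or higher singularity would contribute $\delta\geq 2$ and is therefore excluded.

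Finally, because $M$ is smooth at every $x\in C$, the local $\alpha$-invariant depends only on the analytic germ of $C$ at $x$, so Table~\ref{tab: localalpha} applies directly and gives $\alpha_x(S)=1$ in the smooth and node cases and $\alpha_x(S)=\tfrac56$ in the cusp case. Hence $\alpha_x(S)\geq\tfrac56$ for every $x\in Z(S)$. The one genuinely delicate point is the reduction to an irreducible reduced divisor: on a weak del Pezzo surface carrying $(-2)$-curves the same counting would permit $D$ to split off such components, forcing a case-by-case analysis of reducible configurations, so the strict ampleness of $-K_M$ is used essentially here. After that reduction, the remainder is the standard genus-versus-$\delta$ count together with the classical log canonical thresholds of $A_1$ and $A_2$ singularities.
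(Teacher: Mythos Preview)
Your proof is correct but follows a genuinely different route from the paper's. The paper works through the blowdown $\pi:M\to\CP^2$: it argues that the pushforward $\pi_*(Z(S))$ is an irreducible cubic (a line-plus-conic could pass through at most $2+5=7$ of the eight blowup points), then invokes the generic-position hypothesis to say no cubic through seven of the $p_i$ can be doubled at the eighth, so $\pi_*(Z(S))$ is smooth at each $p_i$ and Table~\ref{tab: localalpha} finishes the job away from the exceptional locus. Your argument is intrinsic: from $(-K_M)^2=1$ and ampleness you force $D$ to be irreducible and reduced, adjunction gives $p_a(D)=1$, hence $\sum_x\delta_x\leq 1$, and the only germs with $\delta\leq 1$ are smooth, $A_1$, or $A_2$.

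Your approach is cleaner and more conceptual: it uses only the numerical datum $c_1^2=1$ together with ampleness, and never appeals to a particular blowup model or to the ``eight points in general position'' conditions the paper needs. The paper's approach, by contrast, keeps the link to plane cubics explicit, which fits its parallel treatment of the $c_1^2=3$ case via the same table. One small point you might make explicit for completeness is the classification step: $\delta\geq\binom{m}{2}$ forces multiplicity $m\leq 2$, so the germ is some $A_n$, and $\delta(A_n)=\lfloor (n+1)/2\rfloor$ then yields $n\leq 2$.
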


 \begin{proof}
    Same notation as in proof of Proposition~\ref{proposition: 6}, we see
    $\pi_*(Z(S))$ is a cubic curve.
    Suppose $\pi_*(Z(S))$ is reducible, then
    $Z(S)=\gamma_1 + \gamma_2$ with $\gamma_1$ a line and $\gamma_2$ a conic curve.
   So $Z(S)$ can pass at most $2+5=7$ points of the blowup points. On
   the other hand, it must pass through all of them. Contradiction!
   Therefore, $\pi_*(Z(S))$ is irreducible.

  If $\pi(x) \in \CP^2 \backslash \{p_1, \cdots, p_8\}$,
  we have $\alpha_x(S) = \alpha_{\pi(x)}(\pi_*(S)) \geq \frac56 > \frac23$ by
  Table~\ref{tab: localalpha}.
  Suppose $\pi(x)  \in \{p_1, \cdots, p_8\}$.  As the $8$ points are in
  generic position, we know no cubic curve pass through $7$ of them
  with one point doubled.  $\pi_*(Z(S))$ is a cubic curve passing
  all these $8$ points, so it must pass through every point
  smoothly.  As $Z(S)$ is irreducible, $x$ must be a smooth point on
  $Z(S)$. So $\alpha_x(S)=1$.

  In short, $\alpha_x(S) \geq \frac56$.
    \end{proof}

\subsubsection{Local $\alpha$-invariant of Pluri-anticanonical
Holomorphic Sections}

  \begin{proposition}
   If $f, g$ are holomorphic functions (or holomorphic sections of a line bundle) defined in a neighborhood of $x$,
  then $\alpha_x(fg) \geq \frac{\alpha_x(f) \alpha_x(g)}{\alpha_x(f) +
  \alpha_x(g)}$, i.e.,
  \begin{align}
     \frac{1}{\alpha_x(fg)} \leq \frac{1}{\alpha_x(f)} +
     \frac{1}{\alpha_x(g)}.
  \label{eqn: holderalpha}
  \end{align}
  \label{proposition: holderalpha}
  \end{proposition}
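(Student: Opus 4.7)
The inequality to prove is equivalent to the statement that, for every pair $(\alpha,\beta)$ with $\alpha<\alpha_x(f)$ and $\beta<\alpha_x(g)$, the local integrability of $\snorm{fg}{}^{-2\gamma}$ holds with $\gamma=\frac{\alpha\beta}{\alpha+\beta}$. Taking suprema on both sides and using the monotonicity of local integrability in the exponent will then deliver (\ref{eqn: holderalpha}).

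The plan is to apply H\"older's inequality with conjugate exponents
\begin{align*}
p=\frac{\alpha+\beta}{\beta},\qquad q=\frac{\alpha+\beta}{\alpha},
\qquad \frac{1}{p}+\frac{1}{q}=1,
\end{align*}
to the factorization $\snorm{fg}{}^{-2\gamma}=\snorm{f}{}^{-2\gamma}\snorm{g}{}^{-2\gamma}$ on a small ball $B$ around $x$. The exponent $\gamma$ is chosen precisely so that $\gamma p=\alpha$ and $\gamma q=\beta$, which forces $\gamma=\frac{\alpha\beta}{\alpha+\beta}$. Then
\begin{align*}
\int_{B}\snorm{fg}{}^{-2\gamma}\,dV
\leq
\left(\int_{B}\snorm{f}{}^{-2\alpha}\,dV\right)^{1/p}
\left(\int_{B}\snorm{g}{}^{-2\beta}\,dV\right)^{1/q}<\infty,
\end{align*}
since both factors are finite by the definition of $\alpha_x(f)$ and $\alpha_x(g)$. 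Hence $\alpha_x(fg)\geq\gamma=\frac{\alpha\beta}{\alpha+\beta}$.

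To finish, I would let $\alpha\nearrow\alpha_x(f)$ and $\beta\nearrow\alpha_x(g)$; since $\frac{\alpha\beta}{\alpha+\beta}$ is continuous and increasing in each variable, this yields $\alpha_x(fg)\geq\frac{\alpha_x(f)\alpha_x(g)}{\alpha_x(f)+\alpha_x(g)}$, which is exactly the reciprocal form (\ref{eqn: holderalpha}). If $f$ and $g$ are sections of (possibly different) line bundles rather than functions, the same argument applies verbatim once one trivializes each bundle on $B$ and works with the local holomorphic representatives, since the pointwise norm $\snorm{\cdot}{}$ is multiplicative on tensor products up to smooth, nonvanishing factors coming from the Hermitian metrics, which do not affect integrability.

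There is really no substantive obstacle here; the only mild point of care is that both $\alpha_x(f)$ and $\alpha_x(g)$ may \emph{a priori} be infinite (for instance if $f$ is nonvanishing at $x$), but the inequality $\frac{1}{\alpha_x(fg)}\leq\frac{1}{\alpha_x(f)}+\frac{1}{\alpha_x(g)}$ is still correct with the convention $\frac{1}{\infty}=0$, and the H\"older argument above adapts by letting the corresponding exponent tend to infinity.
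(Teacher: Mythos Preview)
Your proof is correct and follows essentially the same approach as the paper: both apply H\"older's inequality with conjugate exponents chosen so that the two factors become $\snorm{f}{}^{-2\alpha}$ and $\snorm{g}{}^{-2\beta}$ for exponents strictly below the thresholds, then pass to the limit. The only cosmetic difference is that the paper parametrizes the approximation by a single $\epsilon$ (taking $\alpha=(1-\epsilon)\alpha_x(f)$ and $\beta=(1-\epsilon)\alpha_x(g)$) rather than two independent parameters.
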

  \begin{proof}
   Without loss of generality, we can assume $\alpha_x(f), \alpha_x(g)< \infty$.
   For simplicity of notation, let $a= \alpha_x(f), b=\alpha_x(g),c=\frac{ab}{a+b}$. We only need to prove
    $\alpha_x(fg) \geq c$.

    Fix a small number $\epsilon>0$, note that $\frac{c}{a}+\frac{c}{b}=1$,
  H\"older inequality implies
    \begin{align*}
        \int_U (fg)^{-2c(1-\epsilon)} d\mu = (\int_U
        f^{-2a(1-\epsilon)} d\mu)^{\frac{c}{a}} (\int_U
        g^{-2b(1-\epsilon)})^{\frac{c}{b}}< \infty,
    \end{align*}
  where $U$ is some neighborhood of $x$.
    Therefore, $\alpha_x(fg) \geq c(1-\epsilon)$. As $\epsilon$ can
    be arbitrarily small, we have $\alpha_x(fg) \geq c$.
  \end{proof}

  As an application of Proposition A.1.1 of \cite{Tian90}, we list
  the following property without proof.
  \begin{proposition}
   Suppose $f$ is a holomorphic function vanishing at $x$ with order
   $k$.  In a small neighborhood, we can express $f$ as
   \begin{align*}
     f= a_{ij}z_1^i z_2^j + \cdots
   \end{align*}
   Without loss of generality, we can assume that there is a pair $(i, j)$ such that
   $i \geq j$, $i+j=k$ and $a_{ij} \neq 0$.   Then $\alpha_x(f) \geq
   \frac{1}{i}$.
   \label{proposition: alphacal}
  \end{proposition}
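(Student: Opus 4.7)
The plan is to prove Proposition~\ref{proposition: alphacal} by reducing to a one-variable integral via Weierstrass preparation and then applying H\"older's inequality to a Puiseux factorization, following the strategy of Proposition A.1.1 in \cite{Tian90}. Without loss of generality $x = 0$; by interchanging $z_1 \leftrightarrow z_2$ we may assume $i \geq j$, so the goal is to show $\int_U |f|^{-2\alpha}\, dV < \infty$ for every $\alpha < 1/i$ on a small bidisk $U$ around $0$.

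First I would handle the case $j = 0$. Then $i = k$ and $a_{k,0} \neq 0$, so $f(z_1, 0) = a_{k,0} z_1^k + O(z_1^{k+1})$ is $z_1$-regular of order $k$. Weierstrass preparation gives $f = u \cdot P$ with $u$ a unit and $P(z_1,z_2) = z_1^k + \sum_{l=1}^{k} c_l(z_2)\, z_1^{k-l}$, where $c_l(0) = 0$. Factoring $P = \prod_{l=1}^{k} (z_1 - \phi_l(z_2))$ over Puiseux series and applying H\"older's inequality,
\[
\int_U |P|^{-2\alpha} \, dV \;\leq\; \prod_{l=1}^{k} \Bigl( \int_U |z_1 - \phi_l(z_2)|^{-2\alpha k} \, dV \Bigr)^{1/k},
\]
each factor is finite for $\alpha < 1/k$ by direct evaluation in $z_1$ (for fixed $z_2$, $\int_{|z_1|<1} |z_1 - c|^{-2\alpha k} dV_{z_1}$ converges iff $2\alpha k < 2$). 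Hence $\alpha_0(f) \geq 1/k = 1/i$.

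For the case $j > 0$, I would perform the weighted substitution $(z_1, z_2) = (t^j, t^i s)$ (a $j$-to-$1$ branched cover away from $\{t = 0\}$), under which $z_1^i z_2^j$ pulls back to $t^{2ij} s^j$ and a general monomial $z_1^m z_2^n$ to $t^{jm+in} s^n$. Including the Jacobian $j^2 |t|^{2(i+j-1)}$, the integral transforms to
\[
\int |f|^{-2\alpha}\, dV_{z_1,z_2} \;=\; \frac{1}{j^2} \int |t|^{2(i+j-1) - 4\alpha ij}\, \bigl| a_{ij}\, s^j + (\text{higher-order in } t) \bigr|^{-2\alpha}\, dV_{t,s}.
\]
The $t$-integrability threshold $(i+j)/(2ij)$ exceeds $1/i$ since $i \geq j$; the $s$-integrability of the leading polynomial $a_{ij} s^j$ (for the inner integral, treating $t$ as a parameter) holds for $\alpha < 1/j \geq 1/i$. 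A careful joint estimate near $(t,s) = (0,0)$, splitting into the regions $|t| \leq |s|^j$ and $|t| \geq |s|^j$, then yields convergence of the full integral for $\alpha < 1/i$, giving $\alpha_0(f) \geq 1/i$.

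The main obstacle is precisely this joint integrability analysis: when additional monomials of total degree $k$ appear in the expansion of $f$, they contribute further polynomial terms in $s$ at the leading $t^{2ij}$-level, creating extra zero loci that must be controlled. Following Tian, this is addressed by an iterated Newton-diagram analysis (or equivalently, a further sequence of weighted blow-ups to resolve the singular structure of the leading $s$-polynomial); the key point is that the weight $(j,i)$ ensures the exponent $i$ dominates, so the bound $1/i$ survives the resolution process.
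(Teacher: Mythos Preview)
The paper does not give its own proof of this proposition; it simply records the statement ``as an application of Proposition~A.1.1 of \cite{Tian90}'' and moves on. So there is no in-paper argument to compare against, and your sketch is an attempt to reconstruct Tian's reasoning.

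Your $j=0$ case is correct and is the standard argument: $a_{k,0}\neq 0$ makes $f$ $z_1$-regular of order $k$, Weierstrass preparation followed by H\"older on the factorization $P=\prod_l(z_1-\phi_l(z_2))$ gives $\alpha_0(f)\geq 1/k=1/i$.

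The $j>0$ case, however, has two genuine gaps. First, the single chart $(z_1,z_2)=(t^j,t^i s)$ does not cover a punctured neighborhood of the origin: for bounded $s$ its image is only the horn $\{|z_2|\lesssim |z_1|^{i/j}\}$, and to reach the complementary region near the $z_2$-axis one must allow $|s|\to\infty$. Your ``joint estimate near $(t,s)=(0,0)$'' therefore misses the part of the integral that actually produces the sharp threshold. Second, and more seriously, your claim that the remaining degree-$k$ monomials contribute ``at the leading $t^{2ij}$-level'' is wrong when $i>j$. Since $z_1^m z_2^n\mapsto t^{jm+in}s^n$ and, for $m+n=k$, one has $jm+in=2ij+(i-j)(n-j)$, any term with $n<j$ (equivalently $m>i$) lands at \emph{strictly lower} $t$-order than $2ij$, not the same or higher. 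Concretely, for $f=z_1^3+z_1^2 z_2$ with $(i,j)=(2,1)$ your substitution gives $f=t^3(1+ts)$, so the displayed transformed integrand with leading power $t^{2ij}=t^4$ is incorrect. This is not a cosmetic issue: with $(z_1+z_2)^3$ and the choice $(i,j)=(2,1)$ your scheme would output $\alpha_0\geq 1/2$, while the true value is $1/3$. The weighted blow-up approach can be salvaged, but only after you pin down which pair $(i,j)$ is meant (the one with minimal $j$ among nonzero degree-$k$ coefficients, so that no lower-order competitors appear), and after you handle the second chart or the large-$|s|$ region explicitly.
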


  \begin{lemma}
   Suppose $M$ is a cubic surface, $S \in H^0(K_M^{-m})$, $x \in M$.
   If $\alpha_x(S) \leq \frac{2}{3m}$, then $\alpha_x(S) = \frac{2}{3m}$,
   $S= (S')^m$ where $S' \in
   H^0(K_M^{-1})$ and $Z(S')$ is the union of three lines passing
   through $x$.
  \label{lemma: alpha6}
  \end{lemma}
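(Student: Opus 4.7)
The plan is induction on $m$, with base case $m=1$ being exactly Proposition~\ref{proposition: 6}. For the inductive step, assume the lemma holds for every positive integer less than $m$, and let $S \in H^0(K_M^{-m})$ satisfy $\alpha_x(S) \leq \frac{2}{3m}$. The key intermediate goal is to produce a factorization $S = S_0' \cdot \tilde S$, where $S_0' \in H^0(K_M^{-1})$ is a section whose zero divisor is a triple of concurrent lines through $x$, and $\tilde S \in H^0(K_M^{-(m-1)})$ is the residual factor. Once such an $S_0'$ is extracted, Proposition~\ref{proposition: holderalpha} yields
$$
\frac{1}{\alpha_x(\tilde S)} \;\geq\; \frac{1}{\alpha_x(S)} - \frac{1}{\alpha_x(S_0')} \;\geq\; \frac{3m}{2}-\frac{3}{2} \;=\; \frac{3(m-1)}{2},
$$
so $\alpha_x(\tilde S) \leq \frac{2}{3(m-1)}$. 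The inductive hypothesis then forces $\tilde S = c\,(S_0'')^{m-1}$ for some $S_0'' \in H^0(K_M^{-1})$ with $Z(S_0'')$ a triple of concurrent lines through $x$, and the uniqueness clause of Proposition~\ref{proposition: 6} identifies $S_0''$ with a constant multiple of $S_0'$. Hence $S = \mathrm{const}\cdot(S_0')^m$; combined with $\alpha_x(f^m) = \alpha_x(f)/m$ and $\alpha_x(S_0') = 2/3$, this upgrades the hypothesis $\alpha_x(S)\leq 2/(3m)$ to the asserted equality $\alpha_x(S) = 2/(3m)$.

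The nontrivial step is therefore the extraction of $S_0'$ from $S$. I would write $Z(S) = \sum_i n_i D_i$ as a sum of irreducible effective divisors and discard those $D_i$ not through $x$ (they contribute $+\infty$ to the Hölder bound). Applying Proposition~\ref{proposition: holderalpha} componentwise then gives
$$
\sum_{i\,:\, x\in D_i} \frac{n_i}{\alpha_x(D_i)} \;\geq\; \frac{1}{\alpha_x(S)} \;\geq\; \frac{3m}{2}.
$$
I would then classify the irreducible curves through a point on the cubic surface $M \cong \Blow{6}$: the $27$ exceptional $(-1)$-lines (at most three through any $x$, automatically coplanar in $T_x M$ when they do exist), together with higher-degree irreducible curves whose local $\alpha_x$ is controlled by Proposition~\ref{proposition: alphacal} and the singularity table appearing in the proof of Proposition~\ref{proposition: 6}. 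The target is a sharp per-component inequality of the form $n_i/\alpha_x(D_i) \leq \tfrac{3}{2}\,n_i\,(D_i\cdot K_M^{-1})$, saturated precisely when $D_i$ is an exceptional line through $x$. Summing and comparing with the anticanonical degree identity $\sum_i n_i\,(D_i\cdot K_M^{-1}) = m\,(K_M^{-1})^2 = 3m$ then forces equality in every component, so all contributing $D_i$ must be exceptional lines through $x$; in particular at least three of them appear in $Z(S)$ with multiplicity $\geq 1$, they lie in a common plane $T_x M\subset \mathbb{CP}^3$, and they form the desired anticanonical subdivisor $Z(S_0')\leq Z(S)$.

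The main obstacle is verifying the sharp per-component inequality for every irreducible curve on $M$ passing through $x$. This requires a case-by-case analysis indexed by the anticanonical degree of $D_i$, using Proposition~\ref{proposition: alphacal} and local coordinate expansions to bound $\alpha_x(D_i)$ from below, and matching these bounds against the class decomposition $[D_i] = d_i H - \sum_j e_{ij} E_j$ in $\mathrm{Pic}(M)$. An additional subtlety is the Eckardt condition at $x$: if no three concurrent lines on $M$ pass through $x$, then no anticanonical $S_0'$ of the required form exists, and the hypothesis $\alpha_x(S)\leq 2/(3m)$ must be shown to be vacuous — which is precisely what the per-component inequality, strict for every non-line component, is designed to deliver.
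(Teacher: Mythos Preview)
Your inductive frame is right and matches the paper: once an anticanonical factor $S'$ with $Z(S')$ three concurrent lines through $x$ is split off, Proposition~\ref{proposition: holderalpha} together with the inductive hypothesis and the uniqueness clause of Proposition~\ref{proposition: 6} finish exactly as you describe. The gap is in the extraction of $S'$ via your per-component inequality, and it is not repairable in that form. For an exceptional line $L$ through $x$ one has $\alpha_x(L)=1$ and $L\cdot K_M^{-1}=1$, so $1/\alpha_x(L)=1$ while $\tfrac{3}{2}(L\cdot K_M^{-1})=\tfrac{3}{2}$: your inequality is \emph{strict} on lines, not saturated. Run the whole chain on the extremal divisor $Z(S)=m(L_1+L_2+L_3)$: the componentwise H\"older sum is $\sum n_i/\alpha_x(D_i)=3m$, the degree sum is also $3m$, but $1/\alpha_x(S)=3m/2$, so your sandwich reads $3m/2\le 3m\le 9m/2$ with slack at both ends and nothing is forced. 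The root cause is that Proposition~\ref{proposition: holderalpha} is lossy exactly at a transversal triple point --- $\alpha_x\bigl(zw(z+w)\bigr)=2/3$ whereas componentwise H\"older only gives $\ge 1/3$ --- so any argument that filters $\alpha_x(S)$ through the irreducible decomposition via H\"older discards the factor of $2$ you need.

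The paper extracts $S'$ by a different mechanism that never decomposes $\alpha_x$ componentwise. From $\alpha_x(S)\le 2/(3m)$ it first reads off $\mathrm{mult}_x(S)\ge\lceil 3m/2\rceil$, then runs B\'ezout-type counts against test curves: if $kL\subset Z(S)$ but $(k+1)L\not\subset Z(S)$, the residual multiplicity at $x$ bounds $(K_M^{-m}-kL)\cdot L$ from below and forces $k$ up. Together with Proposition~\ref{proposition: alphacal} (which pins down the leading jet of the residual factor and hence tangential intersection orders), this shows in three successive claims that at least one, then two, then three lines through $x$ must occur in $Z(S)$. When $x$ lies on fewer than three lines of $M$, the multiplicities forced on the available lines and on the residual conic eventually overrun the class $K_M^{-m}$ or force an anticanonical factor whose zero set is \emph{not} three concurrent lines, contradicting the splitting-off claim; this is how the non-Eckardt vacuity you anticipated is actually obtained. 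You will need intersection numbers with specific curves, not a single summable per-component bound.
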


  \begin{proof}
   We will prove this statement by induction. Suppose we have already proved it for all
   $k \leq m-1$, now we show it is true for $k=m$.

  \setcounter{claim}{0}

   \begin{claim}
     If $S$ splits off an anticanonical holomorphic section $S'$, then $Z(S')$ must be
    a union of three lines passing through $x$. Moreover,
    $S=(S')^m$.
    \label{claim: factorline}
   \end{claim}

   Suppose $S= S' S_{m-1}$ where $S' \in H^0(K_M^{-1})$ and $S_{m-1} \in H^0(K_M^{-(m-1)})$.
   Since $\alpha_x(S')\geq \frac23$ and $\alpha_x(S_{m-1}) \geq \frac{2}{3(m-1)}$
  by induction assumption,  inequality (\ref{eqn: holderalpha})
  implies
 \begin{align*}
 \frac{3m}{2} \leq \frac{1}{\alpha_x(S)} \leq \frac{1}{\alpha_x(S')}
 + \frac{1}{\alpha_x(S_{m-1})} \leq \frac32 + \frac{3(m-1)}{2} =
 \frac{3m}{2}.
 \end{align*}
 It forces that
 \begin{align*}
    \alpha_x(S)= \frac{2}{3m},  \quad \alpha_x(S')=\frac23, \quad
    \alpha_x(S_{m-1}) = \frac{2}{3(m-1)}.
 \end{align*}
 Therefore the induction hypothesis tells us that $Z(S')$ is the
 union of three lines passing through $x$, $S_{m-1}= (S'')^{m-1}$
 and $Z(S'')$ is the union of three lines passing through $x$.
 As there are at most three lines passing through $x$ on a cubic surface, we see
 $Z(S')=Z(S'')$.  By changing coefficients if necessary, we have
 $S_{m-1}=(S')^{m-1}$.  It follows that $S=(S')^m$ and we have finished the proof.

 \begin{claim}
 There must be a line passing through $x$.
 \end{claim}
  Otherwise, there is a pencil of anticanonical divisors
  passing through $x$. In this pencil,
   a generic divisor is irreducible and it vanishes at $x$ with order $2$.
   Choose such a divisor and denote it as $Z(S')$. Locally, we can
   represent $S$ by a holomorphic function $f$, as $\alpha_x(f)=\alpha_x(S) \leq
   \frac{2}{3m}$, we see $mult_x(f) \geq \lceil \frac{3m}{2} \rceil$.  If $m$ is
   odd, then $Z(S') \nsubseteq Z(S)$ will imply
   \begin{align*}
       3m= K_M^{-1} \cdot K_M^{-1} \geq 2 mult_x(f) \geq 3m+1.
   \end{align*}
 Impossible! Since $Z(S')$ is irreducible, we know $Z(S') \subset
 Z(S)$. Therefore, $S=S'S_{m-1}$. According to  Claim~\ref{claim: factorline},
 $Z(S')$ is union of three lines and therefore $Z(S')$ is
 reducible. This contradicts to the assumption of $Z(S')$.

     So $m$ must be an even number and $mult_x(f)=\frac{3m}{2}$ exactly.
 Now $f$ can be written as
  \begin{align*}
       \sum_{i, j\geq 0} a_{ij} z_1^i z_2^j, \quad a_{ij}=0 \quad
       \textrm{whenever} \quad i+j< \frac{3m}{2}.
  \end{align*}
  Using the fact $\alpha_x(f) \leq \frac{2}{3m}$,
  Proposition~\ref{proposition: alphacal} implies $a_{i,j}=0$ whenever $i < \frac{3m}{2}$,
  $a_{\frac{3m}{2}, 0} \neq 0$.
  Therefore locally $f$ can be written as $a_{\frac{3m}{2}, 0}z_1^{\frac{3m}{2}} \cdot h$
  for some nonzero holomorphic function $h$.
  This means that $Z(S)$
  contains  a curve with multiplicity $\frac{3m}{2}$.  This is
  impossible for $S \in K_M^{-m}$ as $M$ is the blown up of six
  generic points on $\CP^2$.

%  As $M$ is a cubic surface,  we can blow down 6 exceptional lines
%  $E_1, E_2, \cdots, E_6$, which does not passing through $x$,  and obtain
%  $\CP^2$. Denote $\pi: M \to \CP^3$ as this blowdown map, $p_i= \pi(E_i)$.
%  Let $\tilde{S}$ be the push forward holomorphic section of
%  $K_{\CP^2}^{-m}$, $\tilde{f}$ be the corresponding holomorphic
%  function of $\tilde{S}$.
%  Since $\pi(x) \neq p_i$, we have $\alpha_{\pi(x)}(\tilde{f})=\alpha_x(f) \leq \frac{2}{3m}$,
%  $mult_{\pi(x)}(\tilde{f}) = mult_x(f)= \frac{3m}{2}$.
%  So $\tilde{f}$ can be written as
%  \begin{align*}
%       \sum_{i, j\geq 0} a_{ij} z_1^i z_2^j, \quad a_{ij}=0 \quad
%       \textrm{whenever} \quad i+j< \frac{3m}{2}.
%  \end{align*}
%  Using the fact $\alpha_x(\tilde{f}) \leq \frac{2}{3m}$,
%  Proposition~\ref{proposition: alphacal} implies $a_{i,j}=0$ whenever $i <
%  \frac{3m}{2}$ and  $a_{\frac{3m}{2}, 0} \neq 0$.
%  Therefore locally $\tilde{f}$ can be written as $a_{\frac{3m}{2}, 0}z_1^{\frac{3m}{2}} \cdot h$
%  for some nonzero holomorphic function $h$.
%  It follows that $Z(\tilde{S})$ contains $\frac{3m}{2}$ multiple of an
%  irreducible curve $C$ which is locally defined by $\{z_1=0\}$ in
%  some holomorphic coordinate system.

 \begin{claim}
  The number of lines passing through $x$ is greater than $1$.
 \end{claim}
  Otherwise, there is exactly one line $L_1$ passing through $x$.  So
  there is an irreducible degree 2 curve $D$ passing through $x$ such that $L_1+D=Z(S')$
  for some $S' \in H^0(K_M^{-1})$.  Locally, we can write $S$ as $l_1h$ where $l_1$
  is the defining function for $L_1$.  As $\alpha_x(l_1)=1$, H\"older inequality implies
  that $\alpha_x(h) \leq \frac{2}{3m-2}$. Consequently, $mult_x(h) \geq
  \lceil \frac{3m}{2} \rceil -1$.
    If $2L_1 \nsubseteq Z(S)$, we
  have
  \begin{align*}
         m+1=(K_M^{-m}-L_1) \cdot L_1 \geq \{h=0\} \cdot L_1 \geq
         \lceil \frac{3m}{2} \rceil -1 \quad \Leftrightarrow  m \leq 4.
  \end{align*}
  If $m>4$, this inequality is wrong so we have $2L_1 \subset
  Z(S)$. Actually, using this argument and induction, we can show that
  $\lceil  \frac{m}{4} \rceil L_1 \subset Z(S)$.

   For simplicity of notation, let $p= \lceil \frac{m}{4} \rceil$.
  Locally, $S$ can be written as $l_1^p h$. Clearly, $\alpha_x(h) \leq \frac{2}{3m-2p}$
  and $mult_x(h) \geq \lceil \frac{3m}{2} \rceil -p$.
  Let $f_{q}$ and $h_{q-2}$ be the lowest degree term of $f$ and $h$
  respectively. Then we may assume that $h_{q-2}= z_1^{j_1}z_2^{j_2} + \cdots$
  and any term $z_1^i z_2^j$ in $h_{q-2}$ satisfying $i \geq j_1$.
  Now we have two cases to consider.

 \noindent{\textit{Case1. $L_1$ is tangent to $\{z_1 =0\}$.}}

   If $(p+1)L_1 \nsubseteq Z(S)$, then
     \begin{align*}
                     m + p=(K_M^{-m}- p L_1) \cdot L_1 \geq \{h=0\} \cdot L_1
      \geq (\lceil \frac{3m}{2} \rceil -p) \cdot 2, \quad \Leftrightarrow
       p \geq \frac{m+2\lceil \frac{m}{2} \rceil}{3}.
   \end{align*}
   Here we use the fact $j_1 \geq \lceil \frac{3m}{2} \rceil -p$ since $\alpha_x(h) \leq
   \frac{2}{3m-2p}$. This contradicts to our definition $p=\lceil \frac{m}{4} \rceil$.
   Therefore, $(p+1)L_1 \subset Z(S)$.

 \noindent{\textit{Case2. $L_1$ is not tangent to $\{z_1=0\}$.}}

  In this case, $f_q= \lambda z_1^{a_1} z_2^{a_2+p} + \cdots$
  for some $\lambda \neq 0$.  Moreover, every $z_1^i z_2^j$ in $f_q$
  satisfies $i  \geq j_1$.   Therefore, the fact $\alpha_x(S) \leq \frac{2}{3m}$
  and Proposition~\ref{proposition: alphacal} implies $a_1 \geq
  \lceil \frac{3m}{2} \rceil$. It follows that $q \geq p + \lceil \frac{3m}{2} \rceil$.
  Under these conditions, if $(p+1)L_1 \nsubseteq Z(S)$, we have
  \begin{align*}
        m+p=(K_M^{-m} - p L_1) \cdot L_1 \geq  \lceil \frac{3m}{2}
        \rceil
        \Leftrightarrow p \geq \lceil \frac{m}{2} \rceil.
  \end{align*}
  Impossible if $m \geq 3$.  So $(p+1)L_1 \subset Z(S)$.
  If $m \leq 2$, as $\lceil \frac{m}{2} \rceil = \lceil \frac{m}{4}
  \rceil=1$, we already know $\lceil \frac{m}{2} \rceil L_1 \subset
  Z(S)$.

  Therefore by repeatedly rewriting $S$ in local charts and considering case 1 and case
  2,  we can actually prove that and  $\lceil \frac{m}{2} \rceil L_1 \subset Z(S)$.
  For simplicity, let $n=\lceil \frac{m}{2} \rceil$.
  Moreover, we have following conditions:

  \textit{ Suppose $S$ can be written
  as $l_1^n h'$ locally. Then either $(n+1)L_1 \subset Z(S)$ or
  $L_1$ is not tangent to $\{z_1=0\}$.}

  From here, we can show $D \subset Z(S)$.
  In fact, if $(n+1)L_1 \subset Z(S)$ and $D \nsubseteq Z(S)$,
  we have
  \begin{align*}
   2m =K_M^{-m} \cdot D \geq (n+1)L_1 \cdot D+  mult_x(h'') \geq 2(n+1) + (\lceil \frac{3m}{2} \rceil -(n+1))
   \Leftrightarrow  m \geq 2n+1,
  \end{align*}
  where $h''$ is the function such that locally $S$ is represented by
  $l_1^{n+1}h''$.
  This inequality is impossible as $n=\lceil \frac{m}{2} \rceil$.
  If $L_1$ is not tangent to $\{z_1=0\}$, we know
  $mult_x(h') \geq n + \lceil \frac{3m}{2} \rceil= m+2n$.
  Therefore, $D \nsubseteq Z(S)$ implies that
  \begin{align*}
      2m=K_M^{-m}\cdot D \geq nL_1 \cdot D + mult_x(h') \geq m+4n
      \Leftrightarrow  m \geq 4 \lceil \frac{m}{2} \rceil.
  \end{align*}
  Impossible!   Therefore,  no matter which case happens, we have $D \subset Z(S)$. So $D+L_1 \subset Z(S)$.
  It follows that $S$ splits off an $S' \in H^0(K_M^{-1})$ with
  $Z(S')=L_1+D$, this contradicts to Claim~\ref{claim: factorline}!

 \begin{claim}
 The number of lines passing through $x$ is greater than $2$.
 \label{claim: twolines}
 \end{claim}

  Otherwise, there are only two lines $L_1$ and $L_2$ passing through
  $x$.  There is a unique line $L_3$ not passing through $x$ such that
  $L_1+L_2+L_3 \in K_M^{-1}$.  We first prove the following property:
 \begin{align*}
 k(L_1 + L_2) \subset Z(S)
 \quad \textrm{for all}  \quad 0 \leq k \leq n=\lceil \frac{m}{2}\rceil.
 \end{align*}
 Actually, by induction, we can assume $(k-1)(L_1+L_2) \in Z(S)$. Then $S$ can
be represented
   by a holomorphic function $f=l_1^{k-1} l_2^{k-1} h$ locally.
  Note that $\alpha_x(l_1^{k-1}  l_2^{k-1})=\frac{1}{k-1}$, H\"older inequality implies
  $\alpha_x(h) \leq \frac{\frac{2}{3m}}{1-\frac{2(k-1)}{3m}} = \frac{2}{3m-2(k-1)}$.
  It follows that
  \begin{align*}
  mult_x(h) \geq \lceil \frac{3m}{2} \rceil + 1-k= m+n+1-k.
  \end{align*}
  If $kL_1 \nsubseteq Z(S)$, we
  have
  \begin{align*}
          m=(K_M^{-m}- (k-1)(L_1+L_2)) \cdot L_1 \geq \{h=0\} \cdot {l_1=0}
           \geq  mult_x(h) \geq m+n + 1-k  \Leftrightarrow  k \geq
           n+1.
  \end{align*}
 This contradicts to the  assumption of $k$.   Therefore, we have $kL_1 \subset Z(S)$.
 Similarly, $kL_2 \subset Z(S)$.  So $k(L_1 + L_2) \subset Z(S)$.

  Now locally $S$ can be written as $l_1^{n} l_2^n h$. We have
  $\alpha_x(h)  \leq  \frac{2}{3m-2n}$, $mult_x(h) \geq \lceil \frac{3m}{2}-n \rceil=m$.
  Assume $mult_x(h)=m$.  Under a local
  coordinates, $h=\sum_{i, j \geq 0} a_{ij} z_1^i z_2^j$.
  According to the fact $\alpha_x(h)  \leq \frac{2}{3m-2n}$,
  Proposition~\ref{proposition: alphacal} implies that $a_{ij}=0$ whenever $i< \lceil \frac{3m}{2}-n \rceil=m$.
  Since $mult_x(h)=m$, we see that the
  lowest homogeneous term of $f$ is of form $l_1^{n} l_2^{n} z_1^{m}$.
  The condition $\alpha_x(S) \leq \frac{2}{3m} < \frac{1}{m}$
  implies that either $L_1$ or $L_2$ is tangent to $\{z_1=0\}$ at
  $x$.  Suppose $L_1$ does so.   If $(n+1)L_1 \nsubseteq Z(S)$, we have
  \begin{align*}
           m+n&=(K_M^{-m} - nL_1) \cdot L_1 \geq \{l_2^n h=0 \} \cdot
           L_1\\
             &\geq n + \{\sum_{i,j \geq 0} a_{ij}z_1^iz_2^j=0\} \cdot L_1
           \geq n + \inf\{2i+j | a_{ij} \neq 0\} \geq n + 2m.
  \end{align*}
  Impossible! It follows that $ (n+1)L_1 + nL_2 \subset Z(S)$.

  Consider $L_3$. If $L_3 \nsubseteq Z(S)$, we have
  \begin{align*}
       m=L_3 \cdot K_M^{-m} \geq  ((n+1)L_1 + nL_2)= 2n+1.
  \end{align*}
  This absurd inequality implies $L_3 \subset Z(S)$. Let $S' \in K_M^{-1}$ such that $Z(S')=L_1+ L_2+ L_3$.
  So have split $S$ as $S= S' S_{m-1}$. However, $Z(S')$ is not the union
  of three lines passing through $x$.  This contradicts to Claim~\ref{claim: factorline}!\\

 So there must exist three lines $L_1, L_2, L_3 \subset Z(S)$ passing through $x$.
 Since $M$ is a cubic surface, there exists an $S' \in H^0(K_M^{-1})$
 such that $Z(S')=L_1+L_2+L_3$.
 As we argued in Claim~\ref{claim: twolines}, $L_1, L_2, L_3 \subset
 Z(S)$. Therefore $L_1+L_2+L_3 \subset Z(S)$ and $S$ splits off an
 anticanonical holomorphic section $S'$.
  By Claim~\ref{claim: factorline}, we have $S=(S')^m$.
  \end{proof}

 Similarly, we can prove the following property by induction.
  \begin{lemma}
   Suppose $M$ is a Fano surface and $M \sim \Blow{8}$ , $S \in H^0(K_M^{-m})$, $x \in M$.
   Then $\alpha_x(S) \geq \frac{5}{6m}$ for every $x \in M$.
  \label{lemma: alpha8}
  \end{lemma}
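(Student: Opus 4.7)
The plan is to mimic the inductive argument used in Lemma~\ref{lemma: alpha6}, proceeding by induction on $m$. The base case $m=1$ is exactly Proposition~\ref{proposition: 8}, which gives $\alpha_x(S') \ge \tfrac{5}{6}$ for every anticanonical section on $\Blow{8}$. For the inductive step, I would first handle the easy ``splitting'' case: if $Z(S)$ contains some anticanonical divisor $Z(S')$, write $S = S' \cdot S_{m-1}$ with $S_{m-1} \in H^0(K_M^{-(m-1)})$ and apply Proposition~\ref{proposition: holderalpha} together with Proposition~\ref{proposition: 8} and the induction hypothesis to obtain
\[
\frac{1}{\alpha_x(S)} \;\le\; \frac{1}{\alpha_x(S')} + \frac{1}{\alpha_x(S_{m-1})} \;\le\; \frac{6}{5} + \frac{6(m-1)}{5} \;=\; \frac{6m}{5},
\]
which yields the desired bound $\alpha_x(S) \ge \tfrac{5}{6m}$.

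The real work is to rule out the non-splitting case. Assume by contradiction that no anticanonical divisor through $x$ is contained in $Z(S)$ and that $\alpha_x(S) < \tfrac{5}{6m}$. Represent $S$ locally by a holomorphic function $f$; Proposition~\ref{proposition: alphacal} then forces $\mathrm{mult}_x(f) \ge \lceil \tfrac{6m}{5} \rceil$ and, in suitable coordinates, the existence of a distinguished direction carrying a monomial $z_1^i$ with $i \ge \lceil \tfrac{6m}{5} \rceil$ in the initial form of $f$. I would then enumerate the $(-1)$-curves $L_1, L_2, \ldots$ through $x$ (on $\Blow{8}$ each satisfies $L \cdot K_M^{-m} = m$) and run the same peel-off argument as in Lemma~\ref{lemma: alpha6}: by a secondary induction on $k$ using the intersection bound $(K_M^{-m} - (k-1)L_j)\cdot L_j = m + (k-1)$, one forces $k L_j \subset Z(S)$ up to some threshold determined by $m$. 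Once sufficient multiplicity along the $(-1)$-curves through $x$ has been absorbed, the residual part of $S$ must also pick up the remaining component of an anticanonical divisor through $x$ (a smooth conic, or the smooth/cuspidal branch completing a cubic), assembling a full anticanonical divisor inside $Z(S)$ and contradicting the non-splitting assumption.

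The main obstacle will be the combinatorics of this peel-off analysis. Compared with the cubic surface case of Lemma~\ref{lemma: alpha6}, on $\Blow{8}$ the fundamental intersection number drops from $3$ to $1$ (since $K_M^{-1} \cdot K_M^{-1} = 1$ on a degree one del Pezzo), so every intersection inequality is tighter, and the list of local singularity types along an anticanonical divisor through $x$ is richer -- cusps enter as the worst case, not ordinary nodes. I expect the case split according to the number and mutual configuration of $(-1)$-curves through $x$ to be substantially longer than in Lemma~\ref{lemma: alpha6}. It will likely be necessary to maintain a rigidity companion statement in the spirit of the second half of Proposition~\ref{proposition: 6}, characterizing the $S$ that achieve $\alpha_x(S) = \tfrac{5}{6m}$ as $m$-th powers of a unique anticanonical section whose zero divisor is a cuspidal cubic with cusp at $x$; such rigidity is what keeps the induction sharp enough to hit the threshold $\tfrac{5}{6m}$ at every step.
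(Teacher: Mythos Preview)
Your inductive framework and treatment of the splitting case are exactly right, but you have badly misjudged the difficulty of the non-splitting case: on $\Blow{8}$ it is a two-line argument, not a long case analysis. The point you noticed yourself --- that $K_M^{-1}\cdot K_M^{-1}=1$ --- works \emph{for} you, not against you. Recall from the proof of Proposition~\ref{proposition: 8} that every anticanonical divisor on $\Blow{8}$ is irreducible (a reducible plane cubic passes through at most $7$ of the $8$ blown-up points). So choose any $S'\in H^0(K_M^{-1})$ with $x\in Z(S')$. If $\alpha_x(S)<\tfrac{5}{6m}$ then $\mathrm{mult}_x(S)>\tfrac{6m}{5}$, and if $Z(S')\nsubseteq Z(S)$ then, since $Z(S')$ is irreducible,
\[
m \;=\; K_M^{-m}\cdot K_M^{-1} \;=\; Z(S)\cdot Z(S') \;\ge\; \mathrm{mult}_x(S) \;>\; \tfrac{6m}{5},
\]
a contradiction. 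Hence $Z(S')\subset Z(S)$, $S=S'S_{m-1}$, and your H\"older step finishes the proof. That is the paper's entire argument.

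The peel-off analysis along $(-1)$-curves, the enumeration of local singularity types, and the rigidity companion statement are all unnecessary here. What made Lemma~\ref{lemma: alpha6} laborious was precisely that on a cubic surface anticanonical divisors can be reducible (sums of three lines), so one could not intersect with a single irreducible member of $|K_M^{-1}|$ and be done; one had to force the individual line components into $Z(S)$ one by one. On $\Blow{8}$ irreducibility of $|K_M^{-1}|$ removes that obstacle entirely, and the low degree makes the single intersection inequality decisive. So the proof is substantially \emph{shorter} than in the cubic case, not longer.
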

 \begin{proof}
   Suppose we have proved this statement for all $k \leq m-1$.

   Suppose this statement doesn't hold for $k=m$, then there is a
   holomorphic section $S \in H^0(K_M^{-m})$ and point $x \in M$
   such that $\alpha_x(S) < \frac{5}{6m}$.  Let $f$ be a local
   holomorphic function representing $S$. Clearly, $mult_x(f) >
   \frac{6m}{5}$.
   Choose $S' \in H^0(K_M^{-1})$
   such that $x \in Z(S')$.  Since $S'$ is irreducible, if $Z(S') \nsubseteq
   Z(S)$, we have
   \begin{align*}
               m= Z(S) \cdot Z(S')> \frac{6m}{5}.
   \end{align*}
   It is impossible!   Therefore, $Z(S') \subset Z(S)$. It follows
   that $S=S'S_{m-1}$ for some $S_{m-1} \in H^0(K_M^{-1})$.  So
   Proposition~\ref{proposition: alphacal} implies
   \begin{align*}
      \alpha_x(S) \geq \frac{\frac56 \cdot \frac{5}{6(m-1)}}{\frac56 + \frac{5}{6(m-1)}}
        =\frac{5}{6m}.
   \end{align*}
   This contradicts to the assumption of $\alpha_x(S)$!
 \end{proof}

 Lemma~\ref{lemma: lalpha} is the combination of Lemma~\ref{lemma: alpha6} and Lemma~\ref{lemma: alpha8}.

\vspace{1in}

 Xiuxiong Chen,  Department of Mathematics, University of
 Wisconsin-Madison, Madison, WI 53706, USA; xiu@math.wisc.edu\\

 Bing  Wang, Department of Mathematics, University of Wisconsin-Madison,
 Madison, WI, 53706, USA; bwang@math.wisc.edu

 \qquad \qquad \quad \; Department of Mathematics, Princeton University,
  Princeton, NJ 08544, USA; bingw@math.princeton.edu

\end{document}